\newtheorem{theorem}{Theorem}[section]
\newtheorem{lemma}[theorem]{Lemma}
\newtheorem{proposition}[theorem]{Proposition}
\theoremstyle{definition}
\newtheorem{remark}[theorem]{Remark}
\newtheorem{definition}[theorem]{Definition}
\theoremstyle{plain}
\newtheorem*{Alex-pb}{Alexandrov's Problem}
\newtheorem*{NLK-pb}{(NLK) Problem}
\newcommand{\R}{\mathbb{R}}
\newcommand{\N}{\mathbb{N}}
\newcommand{\Hbb}{\mathbb{H}}
\newcommand{\Sbb}{\mathbb{S}}
\newcommand{\hyp}{\Hbb^{m+1}}
\newcommand{\Sm}{\mathbb{S}^m}
\newcommand{\dS}{d\mathbb{S}^{m+1}}
\newcommand{\cA}{\mathcal{A}}
\newcommand{\cB}{\mathcal{B}}
\newcommand{\cC}{\mathcal{C}}
\newcommand{\cF}{\mathcal{F}}
\newcommand{\cK}{\mathcal{K}}
\newcommand{\cL}{\mathcal{L}}
\newcommand{\cO}{\mathcal{O}}
\newcommand{\sC}{\mathscr{C}}
\newcommand{\sL}{\mathscr{L}}
\newcommand{\sF}{\mathscr{F}}
\newcommand{\dom}{\partial\Omega}
\newcommand{\hpi}{\frac{\pi}{2}}
\newcommand{\ind}{\mathbbm{1}}
\newcommand{\spt}{\mathrm{Spt}}
\newcommand{\can}{\mathrm{can}}
\newcommand{\conv}{\mathrm{conv}}
\newcommand{\ex}{\mathrm{e}}
\newcommand{\dl}{d\ell}
\numberwithin{equation}{section}
\DeclareMathOperator{\argth}{argth}
\renewcommand{\P}{\mathcal{P}}
\begin{document}

%
\title[Prescribing the Gauss curvature of convex bodies in $\hyp$]{Prescribing the Gauss curvature of convex bodies in hyperbolic space}

\author{J\'er\^ome Bertrand and Philippe Castillon}

\subjclass[2010]{}
\keywords{Convex geometry, curvature measure, Gauss curvature, prescription problem, Kantorovich's problem, integral geometry}

\begin{abstract}
	The Gauss curvature measure of a pointed Euclidean convex body is a measure on the unit sphere which extends the notion of Gauss curvature to non-smooth bodies. Alexandrov's problem consists in finding a convex body with given curvature measure. In Euclidean space, A.D.~Alexandrov gave a necessary and sufficient condition on the measure for this problem to have a solution.

	In this paper, we address Alexandrov's problem for convex bodies in the hyperbolic space $\hyp$. After defining the Gauss curvature measure of an arbitrary hyperbolic convex body, we completely solve Alexandrov's problem in this setting. Contrary to the Euclidean case, we also prove the uniqueness of such a convex body. The methods for proving existence and uniqueness of the solution to this problem are both new.

\end{abstract}

\maketitle

%
\begin{center}
	\begin{minipage}{10cm}
		\small
		\tableofcontents
	\end{minipage}
\end{center}

%
%
\section*{Introduction}
%
%
%

\subsubsection*{Alexandrov's problem in Euclidean space}
The geometry of convex bodies in Euclidean space is described by a finite family of geometric measures on the unit sphere. These measures appear when considering the volume $|\Omega_{\varepsilon}|$ of the $\varepsilon$-neighbor\-hood of a convex body $\Omega$. A non-smooth version of Steiner's formula asserts that the Taylor expansion of $|\Omega_{\varepsilon}|$ is a polynomial, and the coefficients are measures supported on the boundary of the body (and normal vectors). These measures are then gathered into two classes: the area measures and the curvature measures. A natural question is to find necessary and sufficient conditions for a measure on $\Sm \sim \dom$ to be one of these geometric measures; this problem already appears in the work of H.~Minkowski and A.D. Alexandrov \cite{Alexandrov_book}. A standard reference for area and curvature measures is R. Schneider's book \cite[\S4 and \S8]{Schneider}; we also refer to \cite{Guan-Lin-Ma} and references therein for results on the curvature measures.

Among the curvature measures is the \emph{Gauss curvature measure} whose definition depends on a fixed point $o$ in the interior of the convex body. When the boundary of the convex body is $C^2$ and strictly convex, this measure is simply  the pull-back of $Kdv_{\dom}$ (where $K$ is the Gaussian curvature of $\dom$ and $dv_{\dom}$ is its Riemannian measure) on the unit sphere about $o$ using the corresponding radial homeomorphism $P: \Sm \longrightarrow \dom$. For an arbitrary convex body, the Gauss curvature measure is defined as the push-forward of the uniform measure $\sigma$ on the sphere using the inverse of the Gauss map $G: \dom \longrightarrow \Sm$ (see \cite[Chapter 1 \S5]{Bakelman} for more on this point); in particular, the Gauss curvature measure and the uniform measure have \emph{the same total mass}. For a convex polytope, the Gauss curvature measure turns out to be the sum of weighted Dirac masses over the set of unit vectors pointing to the vertices, the weights being the exterior solid angles.

The prescription problem for the Gauss curvature measure is known as ``Alexandrov's problem'', as it was first studied and solved by A.D. Alexandrov \cite[\S 9.1]{Alexandrov_book}. Since this problem is our main concern in this paper, in what follows we will omit the word "Gauss" and simply write "curvature measure". A.D. Alexandrov found a necessary and sufficient condition, referred to as \emph{Alexandrov's condition} in this paper, for a measure to be the curvature measure of a Euclidean convex body. Since then, many other proofs of his result were found; almost all of them are based on the same strategy. We believe it is important to briefly summarize this strategy in order to highlight the difficulties you face when trying to solve Alexandrov's problem for hyperbolic convex bodies. The scheme of proof goes as follows. Starting from a measure $\mu$ satisfying Alexandrov's condition, first (weakly) approximate $\mu$ by a sequence of nicer measures $(\mu_k)_{k\in\N}$ (either finitely supported or with smooth densities w.r.t. $\sigma$) also satisfying Alexandrov's condition. Then, solve the problem for this class of nicer measures (either by Alexandrov's topological approach in the case of finitely supported measures or by PDEs methods for smooth ones), set $\Omega_k$ a solution to the problem for $\mu_k$. Next, up to extracting a subsequence, prove that the sequence of $(\Omega_k)_{k \in \N}$ converges to $\Omega_{\infty}$ with respect to Hausdorff distance, then prove that the Gauss curvature measure $\mu_k$ weakly converges to that of $\Omega_{\infty}$; finally, conclude using that, by construction, $\mu_k \rightharpoonup \mu$. In the penultimate step, it is crucial that the  curvature measure is invariant by the dilations fixing $o$, so that we can force the $\Omega_k$'s to lie in a fixed compact set and thus extract a converging subsequence. As we shall see, this invariance by dilations is no longer true in the hyperbolic setting.

We also emphasize that Alexandrov's problem admits a unique solution up to dilation with respect to $o$. This part of the proof, both for polytopes or arbitrary convex bodies, is often delicate \cite{Alexandrov}. Recently, a variational approach to Alexandrov's problem has been implemented by V. Oliker \cite{Oliker-2}, where his proof builds on the clever fact that Alexandrov's problem can be rephrased in terms of the so-called \emph{Kantorovich's dual problem}, a classical tool in the theory of optimal mass transport. Note however that the cost function involved in this version of Kantorovich's problem is non-standard and present difficulties, mainly because  it is not real-valued. Building on V. Oliker's remark and the fact that the total masses of the Gauss curvature and uniform measures are the same, the first author found a purely optimal mass transport approach to solve Alexandrov's problem \cite{Bertrand-2}.

%
\subsubsection*{The hyperbolic setting}
The aim of this paper is to state and solve a hyperbolic version of Alexandrov's problem. Prior to this work, the problem of prescribing the Gaus curvature was mainly studied for smooth convex bodies by PDEs methods. Indeed, considering the boundary as a radial graph $P: \Sm \longrightarrow \dom$, the prescription problem can be rephrased as a PDE of Monge-Amp\`ere type \cite{Oliker-1,Gerhardt}. In addition to that case, A.D. Alexandrov claims without proof in his book \cite[\S 9.3.2]{Alexandrov_book} that the prescription problem can be solved for hyperbolic convex polyhedra in $\Hbb^3$ with the same proof as in the Euclidean setting. Last, a generalization of Alexandrov's problem, considered as a result on prescribed embeddings of the sphere into Euclidean space, is proved in \cite{Bertrand-1}; it concerns hyperbolic orbifolds. 

First, we point out that while the definition of curvature measure for smooth or polyhedral hyperbolic convex bodies can be directly derived from the Euclidean one, the non-trivial holonomy in $\hyp$ makes the definition for arbitrary convex bodies non-trivial; namely, the pull-back of (normal) vectors from the boundary $\dom$ to the fixed point $o$ depends on the chosen path. Another significant difference with the Euclidean case is the behavior of the curvature measure with respect to dilations which is rather intricate, we refer to Remark \ref{curv_non_homo} for more on this point.

Our approach to circumvent the problem of defining the curvature measure is twofold. First, we replace the unit sphere by the de Sitter space $\dS$ in the definition of the Gauss map $G: \dom \longrightarrow \dS$. Second, we use a Lorentzian counterpart of the classical polar transform of Euclidean convex body. This Lorentzian polar transform provides us with a polar convex body $\Omega^*\subset\dS$ whose boundary can be equipped with an area measure. Once these steps are proved, we end up with a natural curvature measure which coincides, in the polytope and smooth cases, with the previous ones.  While this approach based on duality is part of folklore in the polytope case, we are not aware of such a generalization to arbitrary convex bodies elsewhere in the literature. Another method to define a hyperbolic curvature measure, based on Steiner's formula, is proposed in \cite{Kohlmann}; see Remark \ref{rem-curvature_measures_coincide} for more, including a proof that both approaches lead to the same measure. 

The main result of the paper is
\begin{theorem}\label{thm-main}
	Let $\sigma$ be the uniform measure on $\Sm$. A finite measure $\mu$ on $\Sm$ is the curvature measure of some convex body of $\hyp$ if and only if the following conditions are satisfied:
	\begin{enumerate}
		\item $\mu(\Sm)>\sigma(\Sm)$;
		\item Alexandrov's condition: for any convex set $\omega\varsubsetneq\Sm$, the measure satisfies \newline
		 $\sigma(\omega^*)<\mu(\Sm\setminus\omega)$ where $\omega^*$ is the polar set of $\omega$;
		\item vertex condition: for any $\xi\in\Sm$, the measure satisfies $\mu(\{\xi\})<\frac{1}{2}\sigma(\Sm)$.
	\end{enumerate}
	Moreover, under these assumptions, there is a unique convex body in $\hyp$ whose curvature measure is $\mu$ . 
\end{theorem}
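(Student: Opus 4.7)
The plan is to adapt the purely variational optimal-transport approach of \cite{Bertrand-2} to the hyperbolic setting, using the Lorentzian polar transform introduced earlier in the paper as the substitute for the classical Euclidean polar duality. A convex body $\Omega \subset \hyp$ containing $o$ in its interior is encoded by a radial function $r: \Sm \to (0,\infty)$, and its Lorentzian polar $\Omega^*$ provides a de Sitter-side support function $h$. Hyperbolic trigonometry should supply a cost function $c(\xi,\eta)$ on $\Sm \times \Sm$ --- morally, the depth at which one must place a supporting half-space with normal direction $\eta$ in order to meet the ray $\R_+\xi$ --- for which $r$ and $h$ are $c$-conjugate; convexity of $\Omega$ amounts to $c$-concavity of $h$. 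In this language, prescribing the curvature measure $\mu$ via the Gauss map becomes a Monge--Kantorovich problem between $\sigma$ and $\mu$ on $\Sm$, which I intend to solve by Kantorovich duality.

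Concretely, I would maximize the dual functional
\begin{equation*}
\mathcal{I}(h) = \int_{\Sm} h\, d\mu + \int_{\Sm} h^c\, d\sigma
\end{equation*}
over $c$-concave potentials, extract a convex body from the maximizer, and verify that its curvature measure equals $\mu$. Each of the three conditions of Theorem \ref{thm-main} should play a distinct role. Condition (1) is necessary because in $\hyp$ the total curvature mass of any convex body containing $o$ strictly exceeds $\sigma(\Sm)$ (geodesic spreading inflates the Jacobian of the inverse Gauss map relative to the Euclidean case), and it should ensure nontriviality of the maximizer. Alexandrov's condition (2) is a hyperbolic balance condition: the polar set $\omega^*$ controls the range of directions accessible from $\Sm\setminus\omega$ through $c$-concave potentials, and its violation would forbid any admissible transport plan. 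The vertex condition (3) prevents $\dom$ from developing a conical tip that accounts for more than a hemisphere of normals at a single $\xi$, and it is exactly the bound needed to keep the potential finite there.

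The main obstacle, and the precise reason the standard Euclidean strategy breaks down, is compactness. Because no dilation fixing $o$ acts on $\hyp$, neither $\mu$ nor $\mathcal{I}$ is invariant under a one-parameter family of transformations, and the usual ``normalize-and-extract-a-Hausdorff-limit'' approach has no natural substitute. I would therefore aim for a direct proof of coercivity of $\mathcal{I}$, quantitatively tying any divergence along a maximizing sequence to the failure of one of (1)--(3). Controlling how far $\dom$ can recede towards the ideal boundary of $\hyp$ in a given direction will be the delicate point, and is where the three strict inequalities should enter critically and simultaneously.

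Uniqueness, which is genuinely new compared to the Euclidean case, should come from strict concavity of the dual functional. In Euclidean geometry the dilation symmetry makes $\mathcal{I}$ invariant under addition of a constant to $h$, forcing uniqueness only up to dilation; in $\hyp$ this symmetry is absent, and the maximizer should be pinned down. Concretely I expect to show that if two convex bodies shared the same curvature measure, then an interpolation between their Lorentzian support functions would strictly increase $\mathcal{I}$ unless the bodies coincide. A cleaner route may come from a direct monotonicity argument: a pointwise inequality between the support functions of two competitors should propagate, via the Monge--Amp\`ere-type equation satisfied by $h$, to a strict inequality of curvature measures, contradicting equality.
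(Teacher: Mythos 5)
The central gap is that the linear Kantorovich functional you propose cannot work. You suggest maximizing $\mathcal{I}(h)=\int h\,d\mu+\int h^c\,d\sigma$, whose Euler--Lagrange equation would force $(\partial^c h)_\#\sigma=\mu$; but this is impossible because $\mu(\Sm)>\sigma(\Sm)$, and moreover the curvature-measure relation in the hyperbolic setting is not a plain pushforward of $\sigma$ to $\mu$ but (Lemma~\ref{def_CurvMeas_Impli}) $T_\#\bigl(\cosh^{m+1}(h)\,\sigma\bigr)=\cosh(r)\,\mu$, with weights depending on the unknown potentials themselves. This forces a \emph{nonlinear} replacement for the dual functional: the paper substitutes $\varphi=-\ln\tanh h$, $\psi=\ln\tanh r$ (so it is these logarithmic transforms, not $r$ and $h$ themselves, that form a $c$-conjugate pair for the cost $c(\eta,\xi)=-\ln\langle\eta,\xi\rangle$), and then maximizes $\cK(\varphi,\psi)=\int F(\varphi)\,d\sigma+\int G(\psi)\,d\mu$, where the primitives $F$, $G$ are engineered precisely so that $F'=f$, $G'=g$ reproduce the density factors in the curvature-measure relation. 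The Euler--Lagrange equation (Theorem~\ref{thm-from_NLK_to_transport}) then reads $\partial^c\varphi_\#(f(\varphi)\sigma)=g(\psi)\mu$, which is exactly the desired equation. Without this nonlinearity your scheme has no admissible critical point. The coercivity you invoke is also not a soft argument: the paper obtains it via a discretization of $\mu$, a total ordering of the discrete potential values modulo bounded differences, and a quantitative use of the strengthened Alexandrov condition on weighted Vorono\"{\i} cells.

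On uniqueness, your first idea (strict concavity of the dual functional) is not what is used and would be hard to make rigorous for this nonconvex nonlinear functional; your second idea (a monotonicity argument) is much closer, but the key tool you do not name is the Cauchy--Crofton formula in de Sitter space, extended to boundaries of non-smooth space-like convex bodies (Appendix~\ref{CCrofton}). The paper proves a strict monotonicity of total boundary area under inclusion of polar bodies (Proposition~\ref{le_monoto_measu}), and then, for two competitors with equal curvature measures, compares the masses assigned to carefully chosen Borel sets built from the maps $S_i$, $T_i$ to derive a contradiction; the same Cauchy--Crofton machinery underlies the necessity of condition~(1) and the strict inequality in Alexandrov's condition. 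Your heuristics for why (1)--(3) are necessary are directionally correct, but they are not proofs, and in particular the vertex condition~(3) enters the existence proof at a specific technical point: ruling out that the maximizing pair touches zero (equivalently, that the radial function becomes infinite), which requires the asymptotic comparison $G(-t)\sim-\sqrt{2t}$ against the local singular behavior of $\int f(c(\cdot,\xi_0)+s)\,d\sigma$.
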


Proving that these three conditions are necessary is much less straightforward to do than in the Euclidean case. For instance, Euclidean convex bodies satisfy (3) as a corollary of (2) and $\mu(\Sm)=\sigma(\Sm) $. The main tool to prove this part and the uniqueness of the underlying convex body is based on the Cauchy-Crofton formula. This result has been extended to Lorentzian space forms by G. Solanes and E.~Teufel \cite{Solanes-Teufel}. In their paper, the authors mainly deal with smooth hypersurfaces; the non-smooth generalization we need, especially in the case with boundary, is proved in Appendix \ref{CCrofton}. 

The proof of the existence part in Theorem \ref{thm-main} is purely variational and completely independent of the uniqueness property. A key point is a strengthened version of Alexandrov's condition, see Propositions \ref{prop-A_alpha} and \ref{prop-Alexandrov}, which, in a way, enables to reduce the proof to the case where $\mu$ is finitely supported. Nevertheless, let us recall the curvature measure is \emph{not} invariant by dilations, thus Alexandrov's argument via Hausdorff compactness does not apply and proving the result only for finitely supported measures is not sufficient. Our proof builds on the method used by the first author in \cite{Bertrand-2}. However, since the total masses of the measures are no more equal, the optimal mass transport approach is no longer useful. On the contrary, Kantorovich's dual problem remains a pertinent tool. A major novelty compared to the Euclidean case is the nonlinearity of the "hyperbolic" Kantorovich dual problem. This new feature requires a completely new approach to find solutions to this variational problem. Finally, we believe our strategy is flexible enough to be useful in other contexts which will be investigated elsewhere.

%
\subsubsection*{Organization of the paper}
In Section \ref{sec-geometry_convex_bodies}, we recall basic results on the convex sets in the hyperbolic and the de Sitter spaces together with a introduction to the duality between convex bodies adapted to our framework. We then define the curvature measure and prove it satisfies conditions (1)-(3) in Theorem \ref{thm-main}. We conclude this part by proving the uniqueness part in Theorem \ref{thm-main}. An ingredient of constant use in this part is a version of the classical Cauchy-Crofton formula in the de Sitter space. This formula is explained and generalised to arbitrary convex bodies in Appendix \ref{CCrofton}.

The existence of a hyperbolic convex body with prescribed curvature as stated in Theorem \ref{thm-main} 
is proved in two steps. First, in Section \ref{sec-Alexandrov_meets_Kantorovich}, we introduce an optimization problem on the sphere depending on the measure $\mu$  and prove that a solution to this problem, if it exists, gives rise to a convex body in $\hyp$ whose curvature measure is $\mu$ (see Section \ref{sec-Convex_to_pairs} and Theorem~\ref{thm-from_NLK_to_transport}). Second, this optimization problem is solved in Section \ref{sec-solving_NLK} for measures satisfying conditions (1)-(3) in Theorem \ref{thm-main}; this step ends the proof of our main theorem. The analysis of the optimization problem relies on fine properties of $c$-concave functions for a well-chosen and non-standard cost function $c$ on $\Sm$. These properties are proved in Appendix \ref{app-analysis}.

%
%
\section{The geometry of convex sets in the hyperbolic and de Sitter spaces} \label{sec-geometry_convex_bodies}
%

In this section we provide the basics  of hyperbolic and de Sitter convex geometry that are used in this paper. 

\subsection{The geometric framework}\label{sec-gsu}

We refer to \cite[Chapter 4, \S\, hyperquadrics]{ONeill} for the proofs of the results mentioned in this part.

%
\subsubsection*{The hyperbolic and de Sitter spaces}
We consider the hyperbolic and de Sitter spaces as hypersurfaces of the Minkowski space. Let $\langle \cdot,\cdot\rangle$ be the Lorentzian inner product on $\R^{m+2}$ defined by:
$$
\langle x,y\rangle = -x_0y_0 + \sum_{k=1}^{m+1} x_ky_k\ \mbox{ where }\ x=(x_0,\dots,x_{m+1}),\ y=(y_0,\dots,y_{m+1}).
$$
The Minkowski space is $\R^{m+2}$ endowed with $\langle \cdot,\cdot\rangle$. The light cone $\sL=\{x\in\R^{m+2}\ |\ \langle x,x \rangle=0\}$, made of \emph{light-like vectors}, divides $\R^{m+2}$ into the domain of \emph{time-like vectors} (for which $\langle x,x \rangle<0$) and the one of {\emph space-like vectors} (for which $\langle x,x \rangle>0$). The \emph{future cone} is $\sF=\{x\in\R^{m+2}\ |\ \langle x,x \rangle\le0\mbox{ and }x_0 \geq 0\}$. 
The hyperbolic space $\hyp$ is
$$
\hyp= \bigl\{x\in\R^{m+2}\ \bigl|\ \langle x,x \rangle=-1\mbox{ and }x_0>0\bigr\},
$$
and the de Sitter space $\dS$ is
$$
\dS= \bigl\{x\in\R^{m+2}\ \bigl|\ \langle x,x \rangle=1\bigr\}.
$$
For $x\in\hyp$, the tangent space at $x$ is $T_x\hyp=x^\bot$, and $\hyp$ is equipped with the restriction of the Lorentzian inner product to each tangent space. Because the points $x\in\hyp$ are all time-like, this turns $\hyp$ into a Riemannian manifold. The same process turns $\dS$ into a Lorentzian manifold.

We identify $\R^{m+1}$ with $\{0\}\times\R^{m+1}\subset\R^{m+2}$; note that the restriction of the Lorentzian inner product to that subspace is the Euclidean inner product. The unit sphere of $\R^{m+1}$ is denoted by $\Sm$ and called the \emph{equator} of $\dS$.

For $x\in\hyp$, the equality $T_x\hyp=x^\bot$ allows us to identify the unit sphere $U_x\hyp$ of $T_x\hyp$ with $\{\xi\in x^\bot\ |\ \langle \xi,\xi \rangle=1\}=x^\bot\cap\dS$. In particular, for $o=(1,0,\dots,0)$, this gives $U_o\hyp=\dS\cap\R^{m+1}=\Sm$. Consequently, $\Sm$ is both the unit tangent sphere of the hyperbolic space at $o$ and the equator of the de Sitter space.

%
\subsubsection*{Geodesics in $\hyp$ and $\dS$} For $x\in\hyp$ and $\xi\in U_x\hyp$, the geodesic $c$ of $\hyp$ starting at $x$ with initial speed $\xi$ is given by $c(t) = \cosh (t) x + \sinh (t) \xi$. It is the intersection of $\hyp$ with the vector 2-plane of $\R^{m+2}$ spanned by $x$ and $\xi$. In what follows, for $\xi\in\Sm$, we will denote by $c_\xi$ the geodesic with initial point $o$ and initial speed $\xi$.

In the de Sitter space, the geodesics are also obtained by intersecting $\dS$ with vector 2-planes, but their parameterization depends on the initial speed. For $x\in\dS$ and $\xi\in T_x\dS$, the geodesic $c$ of $\dS$ starting at $x$ with initial speed $\xi$ is given by
$$
c(t)=\left\{\begin{array}{ll}
	\cos (t)x + \sin (t)\xi & \mbox{ if } \langle \xi,\xi \rangle=1; \\
	x + t\xi & \mbox{ if } \langle \xi,\xi \rangle=0; \\
	\cosh (t)x + \sinh (t)\xi & \mbox{ if } \langle \xi,\xi \rangle=-1.
\end{array}\right.
$$

\begin{remark}
	If $c$ is a geodesic of $\hyp$ with initial point $x$ and initial speed $\xi\in U_x\hyp$, then we have $\xi\in\dS$ and $x\in T_\xi\dS$. Moreover, differentiating the expression of $c$, we get that $c'$ is the geodesic of $\dS$ with initial point $\xi$ and initial speed $x$. In particular, for $\eta\in\Sm$, the derivative of the geodesic $c_\eta$ in $\hyp$ is $c'_\eta$, the geodesic of $\dS$ with initial point $\eta$ and initial speed $o$.
\end{remark}

More generally, the $k$-dimensional complete totally geodesic submanifolds of $\hyp$ are precisely the non-empty sets obtained as the intersection of vector $(k+1)$-planes with $\hyp$. In the de Sitter space, a smooth submanifold is said to be \emph{space-like} if all its tangent spaces are only made of space-like vectors. Similarly to the hyperbolic case, the $k$-dimensional, complete, space-like, totally geodesic submanifolds of $\dS$  are the intersections of space-like vector $(k+1)$-planes with $\dS$.

As a particular case, if $\zeta\in U\hyp$, $H_\zeta=\zeta^\bot\cap\hyp$ denotes the hyperplane of $\hyp$ orthogonal to $\zeta$. In a similar way, if $x\in T\dS$ then $\hat{H}_x=x^\bot\cap\dS$ is the hyperplane of $\dS$ orthogonal to $x$ (note that, if $x$ is time-like, then $\hat{H}_x$ is a space-like hyperplane).

Moreover, the isometry group of $X$, with $X =\hyp$ or $X=\dS$, acts transitively onto the subsets of complete (space-like) totally geodesic submanifolds of $X$ of a given dimension. Note that \emph{all} the totally geodesic submanifolds we consider in this paper are assumed to be complete even if we do not explicitly state so.

Last, we recall that the de Sitter space is homeomorphic to a cylinder $\R \times\Sm$ and the map
\begin{equation}\label{eqn-diffeo_de_Sitter}
	\left\{\begin{array}{rcl}
				\R\times \Sm & \to & \dS \\
				(t,\eta) & \mapsto & c'_\eta(t)
			\end{array}\right.
\end{equation}
is a diffeomorphism. Moreover, the pull-back of the canonical metric through this diffeomorphism is
\begin{equation}\label{eqn-met_de_Sitter}
g_{\dS} = -dt^2 + \cosh^2 (t) \can_{\Sm},
\end{equation}
where $\can_{\Sm}$ stands for the canonical metric on $\Sm$. Using this identification, we write
\begin{equation}\label{eqn-projection_equator}
	Q:\left\{\begin{array}{rcl}
				\dS & \to & \Sm \\
				(t,\eta) & \mapsto & \eta
			\end{array}\right.
\end{equation}
the projection on the equator of $\dS$.

%
\subsection{Convex bodies in $\hyp$ and $\dS_+$}\label{Sec-convBo}
%
In the hyperbolic space $\hyp$, we term \emph{convex body} a compact geodesically convex domain $\Omega\subset\hyp$ with non-empty interior. The forthcoming construction of the curvature measure applies to \emph{pointed} convex bodies. Without loss of generality, from now on we assume this point to be $o=(1,0,\dots,0)\in\R^{m+2}$ and to belong to the interior of $\Omega$. 

In the following, for a given subset $A\subset\R^{m+2}$, we denote by
\begin{equation}\label{eqn-def_cone}
	\sC(A)=\{\lambda x\ |\ \lambda\in\R^+,x\in A\}
\end{equation}
the cone generated by $A$. If $\Omega\subset\hyp$ is a convex body, then $\sC(\Omega)$ is a convex cone, and we 
could also define a convex body in $\hyp$ as the intersection of $\hyp$ with a closed convex cone $\mathscr{C}$ of $\R^{m+2}$ with non-empty interior, whose tip is the origin, and which is strictly contained in the future cone $\sF$ (namely, the intersection of $\mathscr{C}$ with the light cone is reduced to the origin). In order to easily adapt tools from Euclidean convex geometry to hyperbolic geometry, let us also emphasize that the intersection of the cone $\sC(\Omega)$ with the hyperplane $\P =\{1\}\times \R^{m+1}$ is a Euclidean convex body $\Omega_E$ contained in the open unit Euclidean ball; the converse also holds  true since $\Omega=\sC(\Omega_E)\cap\hyp$. Therefore, there is a one-to-one correspondence between hyperbolic convex bodies and Euclidean ones contained in the open unit ball.
 	
For $x\in\dom$ and $\zeta\in U_x\hyp$, the hyperplane $H_\zeta$ orthogonal to $\zeta$ at $x$ is a \emph{support hyperplane to $\Omega$ at $x$} if $\Omega$ is contained  in the closed half-space of $\hyp$ bounded   by $H_\zeta$ and containing $o$. If so, $\zeta$ is said to be a \emph{unit normal vector} at $x$. Given the description of totally geodesic hypersurfaces recalled in the previous section, the above cone construction also induces a one-to-one correspondence between the support hyperplanes to $\Omega$ and those to $\Omega_E$ (since both support hyperplanes uniquely determine a support hyperplane to the underlying convex cone).

Using the description of space-like totally geodesic submanifolds of $\dS$ recalled in Section \ref{sec-gsu}, we follow the previous discussion in order to define the \emph{space-like convex bodies} in $\dS_+= \dS \cap\{x\in\R^{m+2}|x_0>0\}$ that are of constant use in this paper.

\begin{definition}[Space-like convex bodies]
	A set $\hat{\Omega}\subset \dS_+$ is \emph{a space-like convex body} if $\hat{\Omega}=\sC\cap\dS_+$ where $\sC\subset \R^{m+2}$ is a closed convex cone which contains the future cone $\sF$ (without $0$) in its interior.
	
	For $\zeta \in\partial\hat{\Omega}$ and $x\in U_{\zeta}\dS$, the hyperplane $\hat{H}_x$ orthogonal to $x$ at $\zeta$ is a support hyperplane to $\hat{\Omega}$ at $\zeta$ if $\sC(\hat{\Omega})$ and $\{o\}$ are contained in the same half-space of $\R^{m+2}$ defined by $x^\bot$.
\end{definition}

\begin{remark}\label{rem-conv} Notice that 
	\begin{enumerate}
		\item In general, convex bodies in $\dS$ can be defined as intersections of $\dS$ with convex cones of $\R^{m+2}$, the space-like ones being the convex bodies with only space-like support hyperplanes.
		
		To any space-like convex body $\hat{\Omega} \subset \dS_+$ corresponds an $\alpha>0$ such that $x_0\ge\alpha$ for any $(x_0,\dots,x_{m+1})\in\hat{\Omega}$.
		
		\item  As a consequence of the definition, any support hyperplane $\hat{H}_x$ to $\hat{\Omega}$ has to be space-like and we can assume that $x \in \hyp$. Moreover, a space-like convex body is necessarily unbounded, and it is homeomorphic to $[0,+\infty)\times \Sm$. 
		
		\item By considering the intersection of the cone $\sC(\hat{\Omega})\cup\sF$ and the hyperplane $\P =\{1\}\times \R^{m+1}$, we obtain a one-to-one correspondence between the space-like convex bodies $\hat{\Omega}$ in $\dS_+$ and the Euclidean convex bodies $\hat{\Omega}_E$ in $\P$ containing the closed unit ball in their interior. As for hyperbolic convex bodies, there is also a one-to-one correspondence between the support  hyperplanes to $\hat{\Omega}$ and those to $\hat{\Omega}_E$ obtained through the cone generated by the support hyperplane in either case. 

		\item Despite the Euclidean models of $\hyp$ and $\dS_+$ we use are not conformal to  $\hyp$ and $\dS_+$ respectively, if the correspondence between $\Omega$ and $\Omega_E$ maps $x\in\dom$ to $y\in\dom_E$, it also maps the exterior normals to $\dom$ at $x$ to the exterior normals to $\dom_E$ at $y$. This follows from metric considerations. The same holds for a space-like convex body $\hat{\Omega}\subset\dS_+$ and its corresponding $\hat{\Omega}_E\subset \P$. For instance, given $\hat{H}_{c_\eta(t)}$ a totally geodesic hypersurface in $\dS$, $\eta$ is characterized as the unique point where the following function $f$ attains its maximum:
		$$
		f: \zeta \longmapsto \sup \{ s\ |\ \forall a \leq s \  c'_{\zeta}(a) \notin \hat{H}_{c_\eta(t)} \}.
		$$
		The function $f$ is the support function of the space-like convex body $\tilde{\Omega}$ bounded by  $\hat{H}_{c_\eta(t)}$. Thus, to $\tilde{\Omega}$ corresponds $\tilde{\Omega}_E$, bounded by a hyperplane $\hat{H}_E$ (corresponding to $\hat{H}_{c_\eta(t)}$), and its support function is $\text{cotanh}\, h$; $\eta$ is also characterized as the unique minimum of
		$$
		f: \zeta \longmapsto \sup \{ s\ |\  \forall a \leq s \ a\zeta \notin \hat{H}_E \}.
		$$
		
		A similar phenomenon holds for hyperbolic convex bodies. 
	\end{enumerate} 
 \end{remark}

Last, we define the \emph{Gauss map} of a hyperbolic convex body. The Gauss map maps each point $x\in\dom$ to the set of exterior unit normal vectors at $x$. In Euclidean geometry, it is described as a (multivalued) map from $\dom$ to the unit sphere. In non-flat spaces, there is no canonical identification of tangent spaces. This is why we consider the hyperbolic space as an hypersurface of the Minkowski space so that we can identify each unit tangent sphere of $\hyp$ with a subset of the de Sitter space: for all $x \in \hyp$, $U_x\hyp=x^\bot\cap\dS$. The following definition extends the Gauss map defined by E. Teufel for smooth hypersurfaces  \cite[Definition 1]{Teufel} to arbitrary convex bodies.
\begin{definition}[Gauss map of $\Omega$]\label{def-Gauss-map}
	Let $\Omega\subset\hyp$ be a convex body. The Gauss map of $\Omega$ is defined as the multivalued map $G:\dom\rightrightarrows\dS$, where $G(x)\subset\dS$ is the set of outward unit normal vector(s) at $x$.
\end{definition}

In the next paragraph, we introduce the hyperbolic and de Sitter counterparts of the standard radial and support functions. See \cite{Schneider} for more details about the Euclidean framework.

\subsubsection*{Radial and support functions}

We begin with hyperbolic convex bodies. For $\xi\in\Sm$, recall that $c_\xi (t)= \cosh(t) \, o + \sinh(t) \, \xi$ is the geodesic starting at $o$ with initial speed $\xi$.

\begin{definition}\label{def-radial_support_functions}
	Let $\Omega\in\hyp$ be a convex body with the point $o$ in its interior. The radial function $r:\Sm\to ]0,+ \infty[$ and support function $h:\Sm\to ]0,+ \infty[$ of $\Omega$ are defined by
	$$
	r(\xi) = \sup\{t\in \,(0,+\infty)\ |\ c_\xi(t)\in\Omega \},
	$$
	and
	$$
	h(\eta) = \sup\{t\in \,(0,+\infty)\ |\ \Omega\cap H_{c'_\eta(t)}\not=\emptyset \}.
	$$
\end{definition}
In particular, the boundary $\dom$ of $\Omega$ is the radial graph over $\Sm$ of the function $r$, namely the map
$$
P:\left\{\begin{array}{lcl}
\Sm & \longrightarrow & \dom \\
\xi & \longmapsto & \cosh (r(\xi)) \,o + \sinh (r(\xi)) \xi
\end{array}\right.
$$
is a homeomorphism.

Standard trigonometric calculations based on the cone construction recalled above connect the hyperbolic radial and support functions of $\Omega$ to their Euclidean counterparts associated to the convex body $\Omega_E$. These functions are denoted by $r_E$ and $h_E$ respectively. More precisely, the relations are 
\begin{eqnarray}\label{eqn_r_and_h}
    \tanh  h=  h_E \qquad\mbox{and}\qquad \tanh r = r_E \nonumber \\
	\tanh (h(\eta)) =  \max_{\zeta\in\Sm}\bigl(\tanh (r(\zeta))\langle \eta,\zeta\rangle\bigr),
\end{eqnarray}
where the equality in the second line follows from the definition of $h_E$: 
$$
h_E(\eta)= \max_{\zeta\in\Sm}\bigl( r_E(\zeta)\langle \eta,\zeta\rangle\bigr).
$$
The maximum in (\ref{eqn_r_and_h}) may be achieved at more than one point, and we set \\$T:\Sm\rightrightarrows\Sm$ the multivalued map defined by
\begin{equation}\label{eqn-definition_T}
	\xi\in T(\eta) \Leftrightarrow \tanh (h(\eta)) =\tanh (r(\xi))\langle \eta,\xi\rangle.
\end{equation}

We now define the radial and support functions of a space-like convex body in $\dS_+$.
\begin{definition}
	Let $\hat{\Omega}\in\dS_+$ be a space-like convex body. The radial function $\hat{r}:\Sm\to (0,+\infty)$ and the support function $\hat{h}:\Sm\to (0,+\infty)$ of $\hat{\Omega}$ are defined by
	$$
	\hat{r}(\eta) = \sup\{t\in \,(0,+\infty)\ |\ c'_\eta(t)\not\in\hat{\Omega} \},
	$$
	and
	$$
	\hat{h}(\xi) = \sup\{t\in \,(0,+\infty)\ |\ \hat{\Omega}\cap \hat{H}_{c_\xi(t)}=\emptyset \}.
	$$
\end{definition}
In particular, the boundary $\partial\hat{\Omega}$ of $\hat{\Omega}$ is the radial graph over $\Sm$ of the function $\hat{r}$, namely the map $Q$ restricted to $\partial\hat{\Omega}$ is one-to-one and
$$
Q^{-1}:\left\{\begin{array}{lcl}
				\Sm & \longrightarrow & \partial\hat{\Omega} \\
				\eta & \longmapsto & \sinh (\hat{r}(\eta)) \,o + \cosh (\hat{r}(\eta)) \eta
			\end{array}\right.
$$
is a homeomorphism.

%
\subsection{Duality and convex bodies}\label{sec-duality}

For more on the results described in this part, we refer the interested reader to the book \cite{Schneider} for an exposition in the standard Euclidean framework and to the recent paper \cite{Fillastre-Seppi} for a more geometrical discussion, including the hyperbolic and de Sitter spaces among others.

The building block of this part is the duality of convex cones in $\R^{m+2}$ endowed with a non-degenerate bilinear form $b$ that we now recall. Given a cone $\mathscr{C}$ whose tip is $0$, the polar cone $\mathscr{C}^*$ of $\mathscr{C}$ is defined as
$$
\mathscr{C}^*= \{y \in \R^ {m+2}\ |\ \forall x\in\mathscr{C}\ b(x,y) \leq 0 \}.
$$
Elementary considerations show that $\mathscr{C}$ is a convex cone if and only if 
\begin{equation}\label{eq-duCo}
(\mathscr{C}^*)^* =\mathscr{C}.
\end{equation}

When $b$ is the standard Euclidean inner product, the polar transform of cones is strongly related to the polar transform of Euclidean convex bodies with $0$ in their interior. Precisely, given $K$ a Euclidean convex body in $\R^{m+1}$ with $0$ in its interior, the polar body $K^*$ of $K$, defined as
$$
K^*=\{y \in \R^{m+1} \ |\ \forall x\in K\ b(x,y)\leq 1 \},
$$
is a convex body with $0$ in its interior.

By embedding $\R^{m+1}$ as $\{1\}\times \R^{m+1}$ into $\R^{m+2}$ and using the cone over a subset of $\R^{m+2}$ \eqref{eqn-def_cone}, it is straightforward to check that
\begin{equation}\label{eq-dualLorII}
	K^* \sim \{1\} \times K^* = (\sC(\{-1\} \times K))^* \cap \P,
\end{equation}
where $\P= \{1\}\times \R^{m+1}$ as above. We infer from this property and \eqref{eq-duCo} that $(K^*)^*=K$.


From now on, the duality in $\R^{m+2}$ is intended with respect to the Lorentzian inner product $\langle \cdot, \cdot \rangle$. Note that for this choice of bilinear form, we can rewrite \eqref{eq-dualLorII} in a more convenient way  for us:
\begin{equation}\label{eq-dualLor}
	  K^* \sim \{1\} \times K^* = (\sC(\{1\} \times K))^* \cap \P.
\end{equation}

In the same vein, note that if, in addition, $K$ is contained in the open unit ball $B$ then $K^*$ contains the closed unit ball $\overline{B}$ in its interior. The converse also holds true.

According to the previous discussion and using the description of convex bodies given in Section \ref{Sec-convBo}, for $\Omega$ a hyperbolic convex body (resp. $\hat{\Omega}$ a space-like convex body in $\dS_+$), we define its polar body $\Omega^*$ in $\dS_+$ (resp. $\hat{\Omega}^*$ in $\hyp$) as
$$
\Omega^* = \sC( \Omega)^* \cap \dS \qquad\mbox{and}\qquad \hat{\Omega}^* = (\sC(\hat{\Omega}))^* \cap \hyp.
$$ 

Using the correspondence between $\Omega$ and $\Omega_E$ introduced earlier, we can rewrite $\Omega^*$ as
$$
\Omega^* = \sC(\Omega_E)^* \cap \dS= \sC(\Omega_E^*) \cap \dS
$$
where the second equality follows from \eqref{eq-dualLor}. Consequently, by definition of $\Omega_E$, we immediately get  that $\Omega^*$ is a space-like convex body of $\dS_+$. The same argument  
applies when considering the polar body of $\hat{\Omega}$, it gives
$$ \hat{\Omega}^* = \sC(\hat{\Omega}_E)^* \cap \hyp= \sC(\hat{\Omega}_E^*) \cap \hyp$$
and proves that $\hat{\Omega}^*$ is a hyperbolic convex body.

As a consequence, we obtain $(\Omega^*)^* =\Omega$ and $(\hat{\Omega}^*)^*=\hat{\Omega}$. In particular, any space-like convex body in $\dS_+$ is the polar of a unique hyperbolic convex body. Thus, in the rest of the paper, $\Omega^*$ denotes a space-like convex body in $\dS_+$, while $r^*$ and $h^*$ denote the radial and support functions of $\Omega^*$ respectively.

Using the identification $\dS \sim \R \times \Sm$ defined by \eqref{eqn-diffeo_de_Sitter}, and given $\zeta \in \dS$, there exists a unique $(t, \eta)\in \R\times\Sm$ such that $\zeta=c_\eta'(t)=  \sinh (t) \,o+\cosh (t)\, \eta $, and we can rewrite $ \Omega^*$ in a more explicit way:
\begin{align*}
	\Omega^* & = \bigl\{c_\eta'(t)\ |\ \eta\in\Sm,\ t\geq 0,\ \forall\xi\in\Sm\ \forall s\in[0,r(\xi)]\ \ \langle c_\xi(s),c_\eta'(t) \rangle \le 0 \bigr\} \\
	 & = \bigl\{c_\eta'(t)\ |\ \eta\in\Sm ,\ t\geq 0, \forall\xi\in\Sm\ \forall s\in[0,r(\xi)] \\
	 & \qquad \qquad \cosh (t)\sinh (s)\langle\eta,\xi\rangle -\sinh (t) \cosh (s) \le 0 \bigr\} \\
	 & = \bigl\{c_\eta'(t)\ |\ \eta\in\Sm,\ t\geq 0,\ \forall\xi\in\Sm\ \ \tanh (r(\xi))\langle\eta,\xi\rangle \le \tanh (t) \bigr\}  \\
	 & = \bigl\{c_\eta'(t)\ |\ \eta\in\Sm,\ t\in[h(\eta),+\infty) \bigr\},
\end{align*}
where we used the definition of the support function $h$ to get the last equality. In particular, we obtain $\dom^*=\{c_\eta'(h(\eta))\ |\ \eta\in\Sm \}$ and $r^*=h$. 

The latter formula also derives from the relations between radial and support functions of Euclidean polar bodies, denoted by $r_E$ and $h_E$, and $r_E^*$ and $h_E^*$ respectively.
These relations are (see \cite{Schneider})
$$   r_E^*=  1/h_E  \qquad\mbox{and}\qquad  h_E^* = 1/r_E. $$

Combining this together with the relations \eqref{eqn_r_and_h} and easy trigonometric computations makes it clear that $h=r^*$ and $r=h^*$.

\begin{remark}
	The last important property of duality we will use is the following. In Euclidean space, to a support hyperplane $H$ to $\Omega_E$ corresponds a unique $x \in \partial \Omega_E^*$ such that $\langle x, H\rangle=0$ and vice versa. Namely, if $H$ is orthogonal to $\eta \in \Sm$, then $x=r_E^*(\eta) \eta$. The point $x$ can also be characterized using the cone construction above:
	   $\{x\}=\sC(H)^\bot\cap\P$ while $ H=\sC(\{x\})^\bot\cap\P.$
	
	The same computations and geometric construction apply to (space-like) convex bodies in $\hyp$ and $\dS_+$. It gives that
	\begin{itemize}
		\item to a support hyperplane $\hat{H}_{c_\xi (h^*(\xi))}$ to $\Omega^*$ corresponds $x=c_\xi (r(\xi)) \in \partial \Omega$ and vice versa,
		\item  to a support hyperplane $H_{c'_\eta (h(\eta))}$ to $\Omega$ corresponds $x=c'_\eta (r^*(\eta)) \in \partial \Omega^*$ and vice versa.
	\end{itemize}
\end{remark}

By combining this together with the correspondence of normal vectors explained in Remark \ref{rem-conv} (4), we get the following result.
\begin{proposition}\label{prop-S_T_equality_case}
	Let $\Omega\subset\hyp$ be a convex domain with $o$ in its interior and $\Omega^*\subset\dS$ be its polar. The Gauss map $G:\dom\to\dS$ satisfies
	\begin{equation}\label{e_Gauss_onto}
	G(\partial \Omega) =  \dom^*.
	\end{equation}
	Moreover, the multivalued map $S:\Sm\rightrightarrows\Sm$ defined by
	$S=Q\circ G\circ P$ satisfies
	$$
	\eta \in S(\xi) \Leftrightarrow  \tanh (h(\eta)) = \tanh (r(\xi))\langle \eta,\xi \rangle \Leftrightarrow h_E(\eta)=r_E(\xi)\langle \xi,\eta) \Leftrightarrow\xi \in T(\eta).
	$$
\end{proposition}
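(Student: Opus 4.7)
The plan is to combine the explicit parametrization $\partial\Omega^*=\{c'_\eta(h(\eta))\mid \eta\in\Sm\}$ derived just above with the duality recalled in the preceding remark, which identifies the support hyperplanes to $\Omega$ with the boundary points of $\Omega^*$ via $H_{c'_\eta(h(\eta))}\leftrightarrow c'_\eta(h(\eta))$. After establishing \eqref{e_Gauss_onto}, the three equivalences will follow from unfolding $S=Q\circ G\circ P$ together with one Minkowski inner-product computation.

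For \eqref{e_Gauss_onto}, I would argue that an outward unit normal $\zeta\in G(x)$ at $x\in\partial\Omega$ is in one-to-one correspondence with a support hyperplane $H_\zeta=\zeta^\bot\cap\hyp$ to $\Omega$ at $x$. By the preceding remark, every such support hyperplane has the form $H_{c'_\eta(h(\eta))}$ for a unique $\eta\in\Sm$, giving $G(\partial\Omega)\subseteq \partial\Omega^*$. Conversely, any $c'_\eta(h(\eta))\in\partial\Omega^*$ is the outward unit normal at every contact point of $\partial\Omega$ with $H_{c'_\eta(h(\eta))}$, and such a contact point exists because $H_{c'_\eta(h(\eta))}$ is a support hyperplane. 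The ``outward'' character is automatic, since a direct computation gives $\langle o, c'_\eta(h(\eta))\rangle=-\sinh(h(\eta))<0$, placing $o$ in the correct half-space.

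For the three equivalences, fix $\xi\in\Sm$. Using \eqref{e_Gauss_onto} and the diffeomorphism $(t,\eta)\mapsto c'_\eta(t)$ of \eqref{eqn-diffeo_de_Sitter}, the statement $\eta\in S(\xi)$ becomes $c'_\eta(h(\eta))\in G(P(\xi))$, equivalently $P(\xi)\in H_{c'_\eta(h(\eta))}$, i.e.\ $\langle c_\xi(r(\xi)), c'_\eta(h(\eta))\rangle = 0$. Expanding this Minkowski pairing using $\langle o,o\rangle=-1$, $\langle o,\xi\rangle=\langle o,\eta\rangle=0$, and identifying $\langle\xi,\eta\rangle$ with the Euclidean inner product on $\Sm$, one obtains
\begin{equation*}
\sinh(r(\xi))\cosh(h(\eta))\langle\eta,\xi\rangle = \cosh(r(\xi))\sinh(h(\eta)),
\end{equation*}
which rearranges to $\tanh(h(\eta))=\tanh(r(\xi))\langle\eta,\xi\rangle$. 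The second equivalence is then immediate from the identities $\tanh\circ h=h_E$ and $\tanh\circ r=r_E$ recorded in \eqref{eqn_r_and_h}, and the third equivalence is just the definition \eqref{eqn-definition_T} of $T$.

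The main subtlety I anticipate is the orientation step in \eqref{e_Gauss_onto}: one must be certain that the boundary points of $\Omega^*$ produce outward rather than inward normals. This is handled by the sign computation above and, more structurally, by the definition of the polar body in $\dS_+$. Beyond this point, the proof amounts to a translation of classical Euclidean polar duality into Minkowski coordinates.
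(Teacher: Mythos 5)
Your proof is correct and follows essentially the same approach as the paper: both deduce \eqref{e_Gauss_onto} from the polar duality between support hyperplanes of $\Omega$ and boundary points of $\Omega^*$ recalled in the remark preceding the proposition, and then unwind $S=Q\circ G\circ P$ into the defining equation of $T$ via the relations $\tanh h = h_E$, $\tanh r = r_E$. The only difference is stylistic: where the paper invokes Remark~\ref{rem-conv}(4) to transfer the exterior-normal correspondence from the Euclidean model, you verify the outward-pointing condition and the contact equation by a direct Minkowski inner-product computation, which is a legitimate and arguably more self-contained route.
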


\begin{remark}\label{rem_id_T}
The last equivalence above shows that $T$ is the map sending $\eta$, an exterior unit normal vector to $\Omega_E$, to the directions $\xi$ pointing towards the intersection of the support hyperplane orthogonal to $\eta$ and the convex body $\Omega_E$. In the next part, it is recalled that such a $\xi$ is unique for $\sigma$-a.e. $\eta$.

\end{remark}

%
\section{The curvature measure}
%

\subsection{Definition of the curvature measure}

In the Euclidean case, the Gauss map $G$ of a smooth convex body maps onto the unit sphere and pushes the curvature measure $Kdv_{\dom}$ forward to the uniform measure $\sigma$ on $\Sm$. Identifying the unit sphere with $\dom$ via the radial homeomorphism $P:\Sm\to\dom$, the map $G\circ P$ is a transport map from the sphere into itself pushing the (pulled-back)  curvature measure of $\Omega$ to the canonical measure $\sigma$. For non-smooth convex bodies, thanks to the regularity properties of $G\circ P$ ({\it i.e.} $G\circ P$ admits an inverse function defined $\sigma$-a.e. on $\Sm$), this transportation property is used to define the curvature measure $\mu_E$  \cite[Chap. 1 \S5]{Bakelman}:
\begin{equation}\label{e_def_Euclid_curv}
\mu_E := \sigma (G \circ P(\cdot)).
\end{equation}

In the particular case of polytopes, it gives rise to a measure consisting in a linear combination of Dirac masses supported in the directions towards the vertices with weights equal to the exterior solid angles.

The optimal transport approach to solve Alexandrov's problem is based on this construction of the curvature measure. It consists, for a given measure $\mu$ on $\Sm$ and a suitable cost function, in finding the map $T= (G\circ P)^{-1}$ as the unique optimal transport map pushing $\sigma$ forward to $\mu$.

In the hyperbolic case, the definition is similar though more involved. It is based on the area measure $\sigma_{\partial \Omega^*}$ of the boundary of the polar body $\Omega^*$, whose existence is to be proved. Indeed, $\dS$ is only a Lorentzian manifold, not all hypersurfaces admit an area measure, in particular issues can occur at points where the tangent space contains light-like vectors. We study the area measure in the next part. 

\subsubsection*{Area measure of $\partial \Omega^*$}

Throughout this part, $\Omega$ denotes a convex body in $\hyp$ with $o$ in its interior. Our approach to define the area measure of the polar body $\Omega^*$ consists in mimicking the one used for a submanifold $N$ of a Riemannian manifold when $N$ is the graph of a smooth function.

Recall that $\partial \Omega^*$ has only space-like supporting hyperplanes (on which the induced metric is then Riemannian). Moreover, the boundary $\partial \Omega^*$ is the range of the mapping $Q^{-1}: \Sm \longrightarrow \dS \sim \R\times \Sm$ defined by $Q^{-1}(\eta) = (h(\eta), \eta)$. Let us also remind the reader that the support function $h$ is related to its Euclidean counterpart $h_E$ by the formula
$$ h = \argth \circ \, h_E.$$

Consequently, since the convex body $\Omega_E$ is contained in the open unit ball, we infer from the above formula that $h$ is a Lipschitz function. Therefore, in order to check the area measure is well-defined, it suffices to prove that the Jacobian determinant of $Q^{-1}:\Sm\to\dom^*$ is non-zero $\sigma$-a.e.
\begin{lemma}
	Let $\Omega$ as above. Then, for $\sigma$-a.e. $\eta$, the Jacobian determinant of $Q^{-1}:\Sm\to\dom^*$ satisfies
	$$
	|\det T_{\eta}Q^{-1}| = \frac{\cosh^{m+1} (h(\eta))}{\cosh (r(T(\eta)))} \neq 0.
	$$
	
	As a consequence, the area measure of $\dom^*$ is well-defined and satisfies  
	\begin{equation}\label{e_def_AreaMea}
		\sigma_{\partial \Omega^*} = Q^{-1}_\# \left( \frac{\cosh^{m+1} (h)}{\cosh (r\circ T)} \;\sigma\right)
	\end{equation} 
\end{lemma}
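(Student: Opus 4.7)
My plan is to write $Q^{-1}(\eta)=(h(\eta),\eta)$ as a Lipschitz graph over $\Sm$, compute the Jacobian of this graph using the cylinder form \eqref{eqn-met_de_Sitter} of the de~Sitter metric, and then re-express the answer in terms of $r\circ T$ via the envelope formula for the support function. The area-measure identity \eqref{e_def_AreaMea} then follows from the classical area formula for Lipschitz maps.

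Since $\Omega_E$ is compactly contained in the open Euclidean unit ball, the support function $h_E$ is uniformly bounded away from $1$, so $h=\argth h_E$ is Lipschitz on $\Sm$. By Rademacher's theorem $h$ (and equivalently $h_E$) is differentiable $\sigma$-a.e., and at any such point $T(\eta)$ is a singleton by Proposition~\ref{prop-S_T_equality_case} together with the standard correspondence between differentiability of a support function and uniqueness of the maximizer. Fix such an $\eta$ and work in a $\can_{\Sm}$-orthonormal frame at $\eta$. The differential $T_\eta Q^{-1}$ sends $v\in T_\eta\Sm$ to $(dh_\eta(v),v)$, so pulling back $g_{\dS}$ via \eqref{eqn-met_de_Sitter} yields the symmetric bilinear form
$$g^* = \cosh^2(h(\eta))\,I_m - \nabla h\otimes\nabla h.$$
This is a rank-one perturbation of a multiple of the identity, and the standard formula $\det(aI_m-uu^{\top})=a^{m-1}(a-|u|^2)$ gives
$$|\det T_\eta Q^{-1}|^2=\det g^*=\cosh^{2(m-1)}(h)\bigl(\cosh^2(h)-|\nabla h|^2\bigr),$$
where $|\nabla h|$ denotes the spherical gradient norm.

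The main (and essentially only) non-trivial step is to identify $\cosh^2(h)-|\nabla h|^2$ with a quantity involving $r\circ T$. I invoke the envelope formula applied to $h_E(\eta)=\max_{\zeta\in\Sm}r_E(\zeta)\langle\eta,\zeta\rangle$ at its unique maximizer $\xi=T(\eta)$, which yields the spherical gradient $\nabla^{\Sm}h_E(\eta)=r_E(\xi)\bigl(\xi-\langle\xi,\eta\rangle\eta\bigr)$ and hence $|\nabla h_E|^2=r_E^2(\xi)-h_E^2(\eta)$. Combining this with the identities $\cosh^2(h)=(1-h_E^2)^{-1}$, $\nabla h=(1-h_E^2)^{-1}\nabla h_E$, and $1-r_E^2(\xi)=\cosh^{-2}(r(\xi))$ collapses the difference to
$$\cosh^2(h)-|\nabla h|^2=\frac{1-r_E^2(\xi)}{(1-h_E^2)^2}=\frac{\cosh^4(h(\eta))}{\cosh^2(r(T(\eta)))},$$
which substituted above yields the stated Jacobian and shows it is nonzero $\sigma$-a.e. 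The identity \eqref{e_def_AreaMea} is then immediate from the area formula applied to the Lipschitz bijection $Q^{-1}:\Sm\to\partial\Omega^*$, the target being a space-like (hence Riemannian) hypersurface of $\dS$.
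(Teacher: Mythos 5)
Your argument is correct and follows essentially the same route as the paper's: you compute the pulled-back metric $\cosh^2(h)\,I_m-\nabla h\otimes\nabla h$ and its determinant, use an envelope identity at the maximizer $\xi=T(\eta)$ to express $\nabla h$, and simplify. The only (cosmetic) difference is that you convert immediately to the Euclidean quantities $h_E,r_E$ and use the clean algebraic identity $|\nabla h_E|^2=r_E^2(\xi)-h_E^2(\eta)$, whereas the paper carries the hyperbolic variables through and simplifies with $\cosh/\tanh$ identities at the end; both arrive at $\cosh^2 h-|\nabla h|^2=\cosh^4 h/\cosh^2(r\circ T)$ and hence the same Jacobian.
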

\begin{proof}
	Recall that $h$ is a Lipschitz function therefore differentiable $\sigma$-a.e. on $\Sm$. In what follows, $\nabla h$ denotes its gradient relative to the canonical metric on the sphere.

	Let $\eta$ be a point where $h$ is differentiable. To compute the Jacobian determinant, we use that the restriction of $\langle \cdot, \cdot \rangle$ to $\dS \sim \R \times \Sm$ is $g_{\dS}= -dt^2 + \cosh^2(t)\,\can_{\Sm}$. Therefore, if we set $(e_1,\cdots,e_m)$ an orthonormal basis of $T_{\eta} \Sm$, we get
	$$
	g(T_{\eta}Q^{-1} (e_i), T_{\eta}Q^{-1} (e_j)) = - \langle \nabla h(\eta),e_i \rangle \langle \nabla h(\eta),e_j \rangle + \cosh^2 (h(\eta))\,\langle e_i, e_j\rangle.
	$$ 

	To compute the Jacobian determinant we use the standard fact that, given $M$ a row matrix of size $m$ and $I_m$ the identity matrix of size $m\times m$, the determinant of $I_m-M^t \times M$ is $1 - | M|^2$, where $|\cdot|$ stands for the Euclidean norm. This yields
	$$
	{\det}^2 (T_{\eta}Q^{-1}) = \cosh^{2m} (h(\eta)) \left(1 - \frac{|\nabla h|^2}{\cosh^2 (h(\eta))}\right).
	$$

	To conclude, it remains to prove that the right-hand side is equal to the one stated in the Lemma. To this aim, note that if $\xi \in T(\eta)$, we have $\tanh (h(\eta))= \tanh (r(\xi)) \langle \eta, \xi\rangle$ \eqref{eqn-definition_T}. Since $\tanh (h(\eta'))-\tanh (r(\xi)) \langle \xi,\eta'\rangle\geq 0$ holds for arbitrary $\eta'\in\Sm$, differentiating this expression at $\eta$, we infer, for any $\zeta\in T_\eta\Sm$,
	$$
	\frac{\langle \nabla h(\eta),\zeta \rangle}{\cosh^2 (h(\eta))} = \tanh (r(\xi))\, \langle \xi , \zeta\rangle.
	$$

	This yields
	$$
	\nabla  h(\eta) = \cosh^2 (h(\eta))\tanh (r(\xi))\, \left(\xi -\langle\eta, \xi\rangle \eta\right) \in T_{\eta} \Sm.
	$$
	In particular $\xi$ is uniquely determined and coincides with $T(\eta)$. Using \eqref{eqn-definition_T} once again, we can compute the Jacobian of $Q^{-1}$:
	\begin{align*}
		1 - \frac{|\nabla h|^2 (\eta)}{\cosh^2 (h(\eta))} & = 1- \cosh^2(h(\eta)) \tanh^2(r(\xi))(1- \langle \xi, \eta\rangle^2) \\
        & = 1 + \cosh^2(h(\eta)) \tanh^2(h(\eta)) -\cosh^2(h(\eta))\tanh^2(r(\xi))\\
        & = 1 +\sinh^2(h(\eta)) -\cosh^2(h(\eta))\tanh^2(r(\xi)) \\
        & = \cosh^2(h(\eta)) (1 -\tanh^2(r(\xi))) \\
        & = \frac{ \cosh^2(h(\eta))}{\cosh^2(r(\xi))}>0.
	\end{align*}
\end{proof}

We end this part with a result on the area measure $\sigma_{\partial \Omega^*}$ in the smooth and polytope cases.
\begin{proposition}\label{prop-curvature_measure}
	Let $\Omega$ be a convex body of $\hyp$ with $o$ in its interior.
	\begin{enumerate}
		\item If $\dom$ is $C^2$ and strictly convex then $\dom^*$ is a $C^1$ space-like hypersurface of $\dS$ and $\sigma_{\dom^*}=dv_{\dom^*} =G_{\#} (Kdv_{\dom})$.
		\item Let $\xi\in\Sm$ and $x=P(\xi)\in\dom$. Then, $\sigma_{\dom^*}(G(x))$ is the exterior solid  angle of $\dom$ at $x$. In particular, if $\Omega$ is a convex polytope, the volume $\sigma_{\partial \Omega^*}(\dom^*)$ is the sum of the exterior angles over the vertices. 
	\end{enumerate}
\end{proposition}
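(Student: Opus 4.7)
The plan is to deduce both items from the pushforward description \eqref{e_def_AreaMea} of $\sigma_{\dom^*}$, by identifying the map and measure it produces in each case. For (1), the $C^2$ strict convexity of $\dom$ implies, via the Euclidean duality relations $\tanh h=h_E$, $\tanh r=r_E$ and $h_E^*=1/r_E$ recalled in Section~\ref{sec-duality}, that both $r$ and $h$ are $C^1$ on $\Sm$. Hence $Q^{-1}(\eta)=(h(\eta),\eta)$ is a $C^1$ diffeomorphism, so $\dom^*$ is a $C^1$ hypersurface of $\dS$; moreover the strict inequality $|\nabla h|^2/\cosh^2 h<1$ established at the end of the preceding lemma ensures that every tangent hyperplane to $\dom^*$ is space-like. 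Therefore $\dom^*$ inherits a Riemannian volume form $dv_{\dom^*}$, and comparing the Jacobian computed in the preceding lemma with \eqref{e_def_AreaMea} directly gives $\sigma_{\dom^*}=dv_{\dom^*}$. The identity $dv_{\dom^*}=G_{\#}(K\,dv_{\dom})$ is then the classical change-of-variables formula: by Proposition~\ref{prop-S_T_equality_case} the Gauss map $G:\dom\to\dom^*$ is a $C^1$ diffeomorphism onto $\dom^*$, and under the identification $U_x\hyp=x^{\bot}\cap\dS$ its differential coincides up to sign with the Weingarten operator of $\dom$, whose determinant is the Gauss--Kronecker curvature $K$.

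For (2), fix $x=P(\xi)\in\dom$. Any $\nu\in G(x)\subset\dom^*$ is by definition an outward unit normal vector at $x$, so the support hyperplane $H_\nu$ contains $x$ and therefore $\langle\nu,x\rangle=0$, i.e.\ $G(x)\subset x^{\bot}\cap\dS$. The slice $x^{\bot}\cap\dS$ is a totally geodesic space-like hypersurface of $\dS$, canonically isometric to $U_x\hyp$ endowed with its round metric, and by definition the exterior solid angle of $\dom$ at $x$ is precisely the round-sphere measure of $G(x)$ inside $U_x\hyp$. It therefore suffices to prove that $\sigma_{\dom^*}|_{G(x)}$ agrees with the Riemannian measure induced on the totally geodesic piece $G(x)\subset x^{\bot}\cap\dS$; in particular at a smooth point $x$, $G(x)$ is a single point and both measures vanish, while summing this equality over the vertices of a convex polytope yields the final assertion.

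To establish this last equality, I would use that on $G(x)$ the map $T\circ Q$ is constantly equal to $\xi$, so that $r(T(Q(\nu)))=r(\xi)$ for every $\nu\in G(x)$. Parameterizing $G(x)$ by its image $S(\xi)=Q(G(x))\subset\Sm$ through $Q^{-1}$, and using the explicit form \eqref{eqn-met_de_Sitter} of the de Sitter metric together with the Jacobian identity derived in the preceding lemma, one checks that the factor $\cosh^{m+1}(h)/\cosh(r(\xi))$ appearing in \eqref{e_def_AreaMea} reproduces exactly the round-sphere area element of $x^{\bot}\cap\dS$ written in the $\eta$ coordinate. The main obstacle is this final bookkeeping step: because $g_{\dS}$ is Lorentzian, one has to carefully relate the Riemannian measure of a totally geodesic space-like slice of $\dS$ with the pushforward density in \eqref{e_def_AreaMea}, a computation which is ultimately purely trigonometric but requires careful tracking of the relations between $\eta$, $\xi=T(\eta)$ and the ambient Minkowski structure.
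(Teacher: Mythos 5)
Your proof of part (1) follows essentially the same route as the paper's: derive $C^1$ regularity of $h$ and $r$ (the paper phrases this as the Gauss map being $C^1$ and injective rather than invoking the Euclidean duality formulas, but it amounts to the same), identify $\sigma_{\dom^*}$ with $dv_{\dom^*}$ by comparing Jacobians, and conclude with the change-of-variables formula and $\det TG = K$. That part is fine.

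For part (2) you correctly reduce the claim to showing that $\sigma_{\dom^*}$ restricted to $G(x)$ equals the intrinsic Riemannian measure on the totally geodesic slice $H = x^{\perp}\cap\dS$, and you even isolate the relevant density $\cosh^{m+1}(h)/\cosh(r(\xi))$ from \eqref{e_def_AreaMea}. However, you stop there and declare the verification a ``bookkeeping step'' which you describe as the main obstacle, without actually carrying it out. The paper closes this step without any further trigonometry: since $H$ equipped with the restriction of $\langle\cdot,\cdot\rangle$ is a $C^1$ Riemannian manifold, the change-of-variables formula \eqref{e_tec_coi} can be applied directly to the radial graph map $Q^{-1}:\Sm\to H$. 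The Jacobian of this map is already given by the preceding lemma — the formula only depends on the graph function and its gradient, and on $S(\xi)$ the graph function of $H$ coincides (to first order) with $h$, with $T(\eta)\equiv\xi$ — so $Q^{-1}_\#\bigl(|\det TQ^{-1}|\,\sigma\bigr) = dv_H$, whose restriction to $G(x)$ is precisely $\sigma_x|_{G(x)}$, i.e.\ the exterior solid angle. In short, the computation you postpone is already subsumed in the preceding lemma, once one observes that near interior points of $G(x)$ the hypersurface $\dom^*$ coincides with the totally geodesic $H$; no separate Lorentzian-vs-Riemannian reconciliation is needed.
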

\begin{proof}
	Let us prove (1). By assumption on $\Omega$, the Gauss map is injective and $C^1$. Therefore, $\dom^*$ is a $C^1$-manifold and $\Omega^*$ is strictly convex. Consequently, the maps $G,P,$ and $Q$ are $C^1$-diffeomorphisms between $C^1$-Riemannian manifolds. If $f: (A, dv_A) \rightarrow (B,dv_B)$ is a $C^1$-diffeomorphism from $A$ onto $B$, the change of variable formula yields
	\begin{equation}\label{e_tec_coi}
		f_{\#} (|\det Tf| dv_A) = dv_B.
	\end{equation}
	
	We infer from the above formula and $K = \det TG$ that $dv_{\dom^*} =G_{\#} (Kdv_{\dom})$. To conclude, we note that \eqref{e_def_AreaMea} combined with the fact $\dom^*$ is $C^1$ entails $ \sigma_{\dom^*}=dv_{\dom^*}$.
	
	Let us now prove (2). First recall that $G(x)$ is the set of outward unit normal vectors at $x$, so that the exterior angle of $\dom$ at $x$ is $\sigma_x(G(x))$, where $\sigma_x$ is the canonical measure of $U_x\hyp$ (induced by the restriction of $\langle \cdot,\cdot\rangle$ to the tangent space $T_x\hyp$). Therefore, (2) is proved if we check that the area measure of $G(x)$, which is contained in a totally geodesic hypersurface $H$ of $\dS$, is the restriction of the standard Riemannian area mesure induced by $\langle \cdot,\cdot\rangle$. But this hypersurface $H$ equipped with the restriction of $\langle \cdot,\cdot\rangle$ is in particular a $C^1$ Riemannian manifold. Thus (\ref{e_tec_coi}) above  with $f = Q^{-1}: \Sm \rightarrow H$ applies and gives us the result. 
\end{proof}

\subsubsection*{Definition of the curvature measure}
%
With the notion of area measure at our disposal, we can now define the curvature measure of $\Omega$ in a similar fashion than in the Euclidean case.
\begin{definition}\label{def-curvature_measure}
	Let $\Omega$ be a convex body of $\hyp$ with $o$ in its interior. The curvature measure of $\Omega$ is the measure $\mu$ on $\Sm$ defined by
	$$ \mu = \sigma_{\partial \Omega^*} (G \circ P (\cdot)),$$
	where $\sigma_{\partial \Omega^*}$ is the area measure of $\partial \Omega^*$.
\end{definition}

According to \eqref{e_Gauss_onto}, $G(\partial \Omega ) =\dom^*$, thus we infer from the definition the total mass of $\mu$:
$$
\mu(\Sm) =  \sigma_{\partial \Omega^*} (\partial \Omega^*) =: |\partial \Omega^*|.
$$

As a corollary of Proposition \ref{prop-curvature_measure}, we get $\mu=P^{-1}_\#(Kdv_{\dom})$ if $\dom$ is $C^2$ and $\mu=\sum\alpha_i\delta_{\xi_i}$ if $\Omega$ is a polytope, where $\xi_i$ are the directions pointing to the vertices and $\alpha_i$ are the exterior angles at $P(\xi_i)$.

Using the properties of the area measure, we can also characterize $\mu$ implicitly by the following formula.
\begin{lemma}\label{def_CurvMeas_Impli}
	Let $\Omega\subset \hyp$ be a convex body with $o$ in its interior. The curvature measure $\mu$ is characterized by the formula
	$$
	T_\# (\cosh^{m+1} (h) \, \sigma) = \cosh (r) \, \mu.
	$$ 
	\end{lemma}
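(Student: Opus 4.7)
The plan is to unpack the definition of $\mu$ and combine it with the explicit expression \eqref{e_def_AreaMea} for the area measure $\sigma_{\partial\Omega^*}$, then apply a change of variables via the maps $S$ and $T$, which are essentially inverse to each other by Proposition \ref{prop-S_T_equality_case}.

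First I would fix a Borel set $A\subset\Sm$ and start from the definition $\mu(A)=\sigma_{\partial\Omega^*}(G(P(A)))$. Since $Q$ restricted to $\partial\Omega^*$ is a bijection onto $\Sm$, we have $G(P(A)) = Q^{-1}(S(A))$ with $S = Q\circ G\circ P$. Plugging in the formula $\sigma_{\partial\Omega^*} = Q^{-1}_{\#}\bigl(\tfrac{\cosh^{m+1}(h)}{\cosh(r\circ T)}\sigma\bigr)$ gives
$$
\mu(A)=\int_{S(A)}\frac{\cosh^{m+1}(h(\eta))}{\cosh(r(T(\eta)))}\,d\sigma(\eta).
$$

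Next, I would use the content of Proposition \ref{prop-S_T_equality_case}: for $\sigma$-a.e.\ $\eta\in\Sm$, the point $T(\eta)$ is uniquely defined (see Remark \ref{rem_id_T}) and the equivalence $\eta\in S(\xi)\Leftrightarrow\xi\in T(\eta)$ translates into the set-theoretic identity $S(A) = T^{-1}(A)$ modulo $\sigma$-null sets. This is the technical heart of the matter and the step where one has to be a little careful about multivaluedness of $G$ (and hence of $S$ and $T$): the conclusion holds $\sigma$-a.e.\ precisely because the set of $\eta$ where $T$ fails to be single-valued is $\sigma$-null, which in turn is the standard differentiability-a.e.\ property of the Lipschitz support function $h$ already used in the preceding lemma.

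Finally, to identify the two measures on $\Sm$ it suffices to check that they assign the same mass to an arbitrary Borel $A$. Testing $\cosh(r)\,\mu$ against $A$ and using the previous formula yields
$$
\int_A \cosh(r)\,d\mu \;=\; \int_{T^{-1}(A)} \cosh(r(T(\eta)))\cdot\frac{\cosh^{m+1}(h(\eta))}{\cosh(r(T(\eta)))}\,d\sigma(\eta)\;=\;\int_{T^{-1}(A)}\cosh^{m+1}(h)\,d\sigma,
$$
which is exactly $T_{\#}\bigl(\cosh^{m+1}(h)\,\sigma\bigr)(A)$. Since $A$ is arbitrary, this proves the identity. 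The main obstacle is purely bookkeeping: keeping track of the multivalued maps $G$, $S$, $T$ and making sure the change of variable $\xi=T(\eta)$ is legitimate $\sigma$-a.e., but this is already guaranteed by the structural results established earlier in the section.
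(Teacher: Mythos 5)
Your proposal is correct and follows essentially the same route as the paper's own proof: unpack the definition of $\mu$ via $G\circ P$, substitute the explicit density $f=\cosh^{m+1}(h)/\cosh(r\circ T)$ from \eqref{e_def_AreaMea}, identify $S$ with $T^{-1}$ using Proposition~\ref{prop-S_T_equality_case} and Remark~\ref{rem_id_T}, and then cancel the $\cosh(r\circ T)$ factor under the push-forward. Your version merely spells out, by testing against a Borel set $A$, the push-forward identity $T_\#\bigl((a\circ T)\,m\bigr)=a\,T_\#(m)$ that the paper invokes directly; the only small imprecision is that $S(A)=T^{-1}(A)$ holds as an exact set identity from the equivalence $\eta\in S(\xi)\Leftrightarrow\xi\in T(\eta)$, and the ``$\sigma$-a.e.'' caveat is needed only to make $T$ a genuine measurable map so that $T_\#$ is well defined.
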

\begin{proof}
	First note that $\cosh(r)>0$ on $\Sm$, thus $\mu$ is uniquely determined by the above formula. Let us set $f(\eta)=\frac{\cosh^{m+1}(h(\eta))}{\cosh(r(T(\eta)))}$. We obtain the result by first plugging (\ref{e_def_AreaMea}):
	$$
	\sigma_{\partial \Omega^*} = Q^{-1}_\# (f(\eta) \,\sigma)
	$$
	into the definition of the curvature measure:
	$$
	\mu = \sigma_{\partial \Omega^*} (G \circ P (\cdot)).
	$$
	
	We get $ \mu =  \left( f\,\sigma\right) (Q \circ G \circ P (\cdot))= \left( f\,\sigma\right) (S (\cdot))$ by definition of $S$. Now, according to Remark \ref{rem_id_T}, we can rewrite this equality as $ \mu =  \left( f\,\sigma\right) (T^{-1} (\cdot))$. We complete the proof thanks to the formula $T_\# ((a \circ T) \,m) =  a \,T_\#(m)$.
\end{proof}

\begin{remark}
	As explained in the introduction, our approach to Alexandrov's problem in $\hyp$ relies on optimal transport theory. Lemma \ref{def_CurvMeas_Impli} is the transport property of the curvature measure we will use in the next sections.
	
	Since $\sigma$ and $\mu$ do not have the same total mass, a normalization is necessary for a transport map to exist. This normalization depends on $\Omega$. This is the main difference between the Euclidean and hyperbolic cases.
\end{remark}

\begin{remark}\label{curv_non_homo}
	Using the relations involving the radial and support functions of $\Omega$ and its Euclidean counterpart $\Omega_E$, the above formula can be rewritten as
	$$ \mu = T_\# \left( \sqrt{\frac{1-r_E^2(T(\eta))}{(1-h_E^2(\eta))^{m+1}}}\, \sigma\right).$$
	This formula highlights the erratic behaviour of the curvature measure with respect to Euclidean dilations. 
\end{remark}	
	
\begin{remark}\label{rem-curvature_measures_coincide}
	For a smooth convex body $\Omega$, the family of curvature measures on $\dom$ are defined using the symmetric functions of the principal curvatures and the volume form $dv_{\dom}$. These measures appear in the Steiner formula giving the volume of parallel sets to $\Omega$. P. Kohlmann extended the definitions of these measures to arbitrary convex bodies and proved a Steiner-like formula  \cite[Theorem 2.7]{Kohlmann}, defining in particular the Gauss curvature as a measure supported in $\dom$.

	Our definition of the curvature measure $\mu$ coincides with Kohlmann's one in the sense that $P_\#\mu=\Phi_0(\Omega,.)$, where $\Phi_0(\Omega,.)$ is the Gauss curvature measure as defined in \cite{Kohlmann}. To see this, consider a sequence $(\Omega_k)_{k\in\N}$ of smooth strictly convex domains converging to $\Omega$ whose existence is granted by Proposition~ \ref{prop-approximation_convex}. We have seen during the proof of the above proposition that $\mu = T_\# (f\sigma)$ where $f\sigma = Q_\# (\sigma_{\partial \Omega^*})$. Thus, by combining these properties we get $\mu = (T \circ Q)_\#(\sigma_{\partial \Omega^*})$. If $\Omega$ is further assumed to be smooth and strictly convex, Proposition~\ref{prop-curvature_measure} yields $P_\# \mu =  Kdv_{\dom}$. Now, Steiner formula  yields
	\begin{equation} \label{eqn-coincidence_curv_measures}
		(P_k)_\#\mu_k = K_kdv_{\dom_k} = \Phi_0(\Omega_k,.),
	\end{equation}
	where $\mu_k$ is the curvature measure of $\Omega_k$, $P_k:\Sm\to\dom_k$ is the radial homeomorphism and $K_k$ is the Gauss curvature of $\dom_k$.

	Thanks to Proposition \ref{prop-approximation_convex}, we have $\mu_k\rightharpoonup\mu$ and the radial functions $r_k$ converge uniformly to $r$ on $\Sm$. Therefore, the radial projections $P_k$ converge uniformly to $P$, and we get $(P_k)_\#\mu_k\rightharpoonup P_\#\mu$. On the other hand, using Proposition~\ref{prop-approximation_convex} (3) and \cite[Theorem 2]{Veronelli}, we also obtain $\Phi_0(\Omega_k,.)\rightharpoonup\Phi_0(\Omega,.)$. Passing to the weak limit in \eqref{eqn-coincidence_curv_measures} finally gives the result.

\end{remark}
	
\subsubsection*{The total curvature of a convex body}
We call $\mu(\Sm)$ the \textit{total curvature} of $\Omega$. A priori, it is different from $\sigma(\Sm)$: for example, Gauss-Bonnet formulas imply $\mu(\Sbb^1) = 2\pi + |\Omega|$ for a smooth convex body in $\Hbb^2$, and $\mu(\Sbb^2) = 4\pi + |\dom|$ for a smooth convex body in $\Hbb^3$. Based on Gauss-Bonnet-Chern formulas, similar results exist in higher dimensions, they involve the principal curvatures of $\dom$. In particular, these formulas imply 
\begin{equation}\label{e_CompaTotMa}
	\mu(\Sm)>\sigma(\Sm)
\end{equation}
for a smooth convex body (cf. for example \cite[Theorem 1]{Solanes-2}). In the next section, (\ref{e_CompaTotMa}) is proved for arbitrary convex bodies. 

\begin{remark}\label{rem-dependance_base_point}
	The curvature measure depends on the base point $o$ only through the homeomorphism $P:\Sm\to\dom$. In particular, changing the basepoint (or equivalently, moving $\Omega$ by an isometry of $\hyp$, keeping the point $o$ in the interior) does not change the total mass of the curvature measure.
\end{remark}

%
\subsection{Properties of the curvature measure}\label{sec-necessary_conditions}
%
Not all finite measures on $\Sm$ are the curvature measure of a convex body. In the Euclidean case, A.D. Alexandrov gave a necessary and sufficient condition for a measure with the same total mass as $\sigma$ to be the curvature measure of a convex body \cite{Alexandrov}. In this section, we discuss the conditions satisfied by the curvature measure of a hyperbolic convex body.

To this aim, we first establish a monotonicity property regarding the total area measure of space-like convex bodies in the de Sitter space.

\begin{proposition}\label{le_monoto_measu} Let $\Omega^*_1, \Omega^*_2$ be two space-like convex bodies in $\dS_+$. Then, $\Omega^*_1 \supset \Omega^*_2$ implies
$$ |\partial \Omega^*_1| \leq |\partial \Omega^*_2|$$
and equality occurs if and only if $\Omega^*_1= \Omega^*_2$.
\end{proposition}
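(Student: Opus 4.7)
The strategy is to reduce the monotonicity to the Cauchy-Crofton formula for space-like hypersurfaces of $\dS$, which is established in Appendix \ref{CCrofton}. In the form I expect from that appendix, for any space-like convex body $\Omega^*$ of $\dS_+$ the total area of its boundary can be written as a positive multiple of an integral
$$|\partial \Omega^*| = c_m \int_{\mathcal{G}} n(\gamma, \partial \Omega^*) \, d\mu_{\mathcal{G}}(\gamma),$$
where $\mathcal{G}$ is an appropriate space of (timelike) geodesics of $\dS$, $\mu_{\mathcal{G}}$ is a natural measure on $\mathcal{G}$, $c_m>0$ is a universal constant, and $n(\gamma, \partial \Omega^*)$ is the number of intersections of $\gamma$ with $\partial \Omega^*$.

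Granted this formula, the first step is to establish the pointwise inequality $n(\gamma, \partial \Omega_1^*) \leq n(\gamma, \partial \Omega_2^*)$ for $\mu_{\mathcal{G}}$-a.e.\ $\gamma$. Since $\sC(\Omega_i^*)$ contains the future cone $\sF\setminus\{0\}$ in its interior, a generic timelike geodesic $\gamma$ enters $\Omega_i^*$ at large positive time; combined with convexity, which forces $\gamma \cap \Omega_i^*$ to be a connected subset of $\gamma$, this yields $n(\gamma, \partial \Omega_i^*) \in \{0,1\}$ generically. The assumption $\Omega_2^* \subset \Omega_1^*$ and a short case analysis on the position of $\gamma \cap \dS_+$ relative to the two bodies then show that the only way $n_1 \neq n_2$ is $n_1 = 0$ and $n_2 = 1$ (namely, $\gamma\cap\dS_+\subset \Omega_1^*$ but $\gamma\cap\dS_+\not\subset \Omega_2^*$). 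Integrating delivers $|\partial \Omega_1^*| \leq |\partial \Omega_2^*|$.

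For the equality case, suppose the two total area measures coincide while $\Omega_2^* \subsetneq \Omega_1^*$. Pick $p$ in the interior of $\Omega_1^* \setminus \Omega_2^*$. Using that both $\sC(\Omega_1^*)$ and $\sC(\Omega_2^*)$ contain $\sF\setminus\{0\}$ in their interiors and that $\sC(\Omega_2^*)$ is strictly separated from the ray $\R^+\cdot p$ by a space-like hyperplane, one checks that the set of timelike geodesics through $p$ whose trace in $\dS_+$ stays inside $\Omega_1^*$ is open and of positive $\mu_{\mathcal{G}}$-measure; on this open set $n_1 = 0 < 1 = n_2$, contradicting the $\mu_{\mathcal{G}}$-a.e.\ equality of intersection numbers forced by $|\partial \Omega_1^*| = |\partial \Omega_2^*|$. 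Therefore $\Omega_1^* = \Omega_2^*$.

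The main obstacle is less the case analysis above than the preliminary work of identifying the correct Cauchy-Crofton formula in this Lorentzian setting (in particular, for non-smooth boundaries) and verifying the genericity and positive-measure statements involving $\mu_{\mathcal{G}}$; both are delegated to Appendix \ref{CCrofton}. Once they are in place, the proof reduces to the two short pointwise arguments sketched above.
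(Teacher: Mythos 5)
There is a genuine gap: the Cauchy--Crofton formula you invoke is not the one established in Appendix~\ref{CCrofton}. You posit an \emph{absolute} formula
$|\partial\Omega^*| = c_m\int_{\mathcal G} n(\gamma,\partial\Omega^*)\,d\mu_{\mathcal G}(\gamma)$
over a space of timelike geodesics. Theorem~\ref{thm-Crofton-conv} (and the underlying Solanes--Teufel Theorem~\ref{thm-ST}) instead gives a \emph{relative} formula over the space $\cL^s$ of \emph{space-like} geodesics: it expresses only the difference $|\Sigma_2|-|\Sigma_1|$ as $\frac{m}{|\Sbb^{m-1}|}\int_{\cL^s}(\#(\gamma\cap\Sigma_1)-\#(\gamma\cap\Sigma_2))\,\dl(\gamma)$. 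The paper's remark after Theorem~\ref{thm-ST} makes explicit why only the difference makes sense: space-like geodesics near the light cone hit any such hypersurface exactly twice, so $\int_{\cL^s}\#(\gamma\cap\Sigma_i)\,\dl$ diverges for each $i$ and it is only the difference $\#(\gamma\cap\Sigma_1)-\#(\gamma\cap\Sigma_2)$ that has compact support. Getting an absolute de Sitter Crofton formula (over timelike geodesics or otherwise), with a proof of convergence for non-smooth boundaries, would be a substantial additional argument that the paper neither needs nor provides, so you cannot ``delegate'' it to the appendix.

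There is also a secondary flaw in your equality argument: ``the set of timelike geodesics through $p$ whose trace stays inside $\Omega_1^*$ has positive $\mu_{\mathcal G}$-measure'' cannot be right as stated, because the set of geodesics passing through a fixed point $p$ is a positive-codimension submanifold of the geodesic space and therefore $\mu_{\mathcal G}$-null. One would have to integrate over geodesics through a full neighborhood of $p$, and carry out the corresponding openness/measure estimate.

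For comparison, the paper's proof is much shorter precisely because Theorem~\ref{thm-Crofton-conv} already packages everything needed. With $\omega=\{\eta\ |\ h_1(\eta)<h_2(\eta)\}$ and $\Sigma_i=\{(h_i(\eta),\eta)\ |\ \eta\in\omega\}$ one has $|\partial\Omega_2^*|-|\partial\Omega_1^*|=|\Sigma_2|-|\Sigma_1|$ because $h_1=h_2$ off $\omega$, and Theorem~\ref{thm-Crofton-conv} states directly both that $\#(\gamma\cap\Sigma_1)-\#(\gamma\cap\Sigma_2)\ge 0$ for $\dl$-a.e.\ $\gamma$ and that the integral is strictly positive when $\omega\neq\emptyset$. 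The pointwise case analysis and the positive-measure argument you sketch are morally what goes into the proof of Theorem~\ref{thm-Crofton-conv}, but for Proposition~\ref{le_monoto_measu} itself they are already done.
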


\begin{proof}
	The proof follows from  the Cauchy-Crofton formula, proved in Theorem~\ref{thm-Crofton-conv}, applied to $\Omega^*_1$ and $\Omega^*_2$, with $\omega=\{\eta\in\Sm\ |\ h_1(\eta)<h_2(\eta)\}$ and
	$\Sigma_i=\{(h_i(\eta),\eta)\ |\ \eta\in\omega\}$. Since $\Omega^*_2 \subset \Omega^*_1$ we have $h_1\le h_2$ on $\Sm$ and $h_1=h_2$ on $\omega^c$. The Cauchy-Crofton formula gives
	$$
	|\partial \Omega^*_2| - |\partial \Omega^*_1|  = |\Sigma_2|-|\Sigma_1| = \frac{m}{|\Sbb^{m-1}|}\int_{\cL_s}(\#(\gamma\cap\Sigma_1)-\#(\gamma\cap\Sigma_2))\,\dl(\gamma).
	$$
	This implies the above integral is non-negative, and vanishes only if $\omega= \emptyset$ namely $\Omega^*_1= \Omega^*_2$.
\end{proof}
%
\subsubsection*{The total curvature condition}
\begin{proposition}\label{prop-CN_total_curvature}
	If $\Omega\subset\hyp$ is a convex body with the point $o$ in its interior then its curvature measure $\mu$ satisfies $\mu(\Sm)>\sigma(\Sm)$.
\end{proposition}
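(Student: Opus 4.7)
The plan is to compare $\Omega^*$ to the polar $B_\epsilon^*$ of a small hyperbolic ball centered at $o$, using the strict monotonicity of total area given by Proposition~\ref{le_monoto_measu}, and to compute $|\partial B_\epsilon^*|$ explicitly.

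Since $o\in\mathrm{int}(\Omega)$, there exists $\epsilon>0$ such that the closed hyperbolic ball $B_\epsilon$ centered at $o$ is contained in $\Omega$. By the contravariance of the polar transform under inclusion (which follows directly from $\sC(\Omega)\supset\sC(B_\epsilon)$ and the definition of the polar cone), we have $\Omega^*\subset B_\epsilon^*$. Moreover, if $\Omega\neq B_\epsilon$ then $\Omega^*\subsetneq B_\epsilon^*$, because the polar transform is an involution on (space-like) convex bodies by \eqref{eq-duCo}, so $\Omega^*=B_\epsilon^*$ would force $\Omega=B_\epsilon$.

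The polar $B_\epsilon^*$ is particularly simple to analyze: the radial and support functions of $B_\epsilon$ satisfy $r\equiv h\equiv \epsilon$ and $T=\mathrm{id}$. Plugging these into \eqref{e_def_AreaMea} gives
$$
|\partial B_\epsilon^*|=\int_{\Sm}\frac{\cosh^{m+1}(\epsilon)}{\cosh(\epsilon)}\,d\sigma=\cosh^{m}(\epsilon)\,\sigma(\Sm)>\sigma(\Sm),
$$
since $\epsilon>0$. The proof then splits in two cases. If $\Omega=B_\epsilon$, we obtain $\mu(\Sm)=|\partial\Omega^*|=\cosh^{m}(\epsilon)\sigma(\Sm)>\sigma(\Sm)$ directly. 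Otherwise, Proposition~\ref{le_monoto_measu} applied to the strict inclusion $\Omega^*\subsetneq B_\epsilon^*$ yields the strict inequality
$$
\mu(\Sm)=|\partial\Omega^*|>|\partial B_\epsilon^*|=\cosh^{m}(\epsilon)\,\sigma(\Sm)>\sigma(\Sm),
$$
finishing the proof. The only substantive input is the strict part of the Cauchy–Crofton-based monotonicity (Proposition~\ref{le_monoto_measu}); no additional regularity, approximation, or Gauss–Bonnet argument is needed, so there is no real obstacle once the polar ball is used as a reference body. Note in particular how this avoids any delicate pointwise estimate on $\cosh^{m+1}(h)/\cosh(r\circ T)$, which is not always bounded below by $1$.
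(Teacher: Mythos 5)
Your proof is correct and follows essentially the same route as the paper: compare $\Omega^*$ with the polar of a small hyperbolic ball centered at $o$, compute $|\partial B_\epsilon^*|=\cosh^m(\epsilon)\sigma(\Sm)$ from~\eqref{e_def_AreaMea}, and invoke the strict monotonicity of Proposition~\ref{le_monoto_measu}. The only cosmetic difference is that the paper simply chooses $\epsilon$ small enough that the inclusion $\Omega^*\subsetneq B_\epsilon^*$ is automatically strict, whereas you split off the case $\Omega=B_\epsilon$; either way the argument is the same.
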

\begin{proof}
	Let us term "ball" a convex body in the de Sitter space whose boundary is determined by the equations $h=r=\varepsilon>0$, $\varepsilon$ being the radius of the ball (this body is the polar of the ball $B(o,\varepsilon)\subset\hyp$). By assumption, $\Omega^*$ is strictly contained in a ball of sufficiently small radius $\varepsilon >0$ (cf. Remark \ref{rem-conv}(1)). The curvature measure of this ball is, by definition,  $Q^{-1}_\#(\cosh^m (\varepsilon) \, \sigma)$. To conclude, it suffices to apply Proposition \ref{le_monoto_measu} to $\Omega^*$ and the ball of radius $\varepsilon$.
\end{proof}

\subsubsection*{Alexandrov's condition}
Let $\cF$ denote the set of non-empty closed sets of $\Sm$ and $\cC\subset\cF$ the subset of convex sets. In this part, all the measures we consider on $\Sm$ are assumed to have a total mass greater than or equal to $\sigma(\Sm)$.

The polar of a convex $\omega\in\cC$ is $\omega^*=\{\zeta\in\Sm\ |\ \forall\xi\in\omega\ \  \langle\xi,\zeta\rangle\le 0\}$. Since $\langle\xi,\zeta\rangle=\cos(d(\xi,\zeta))$, where $d$ is the intrinsic distance on $\Sm$, then $\omega^*=\Sm\setminus\omega_{\hpi}$, where $\omega_{\hpi}$ stands for the open $\hpi$-neighborhood of $\omega$.
\begin{remark}\label{rem-dual_convex_Gauss_map}
	This notion of polar set in the sphere is related to the Gauss map in the same way as in the Euclidean case: for a point $x\in\dom$, the set of unit tangent vectors at $x$ pointing to the interior of $\Omega$ is a convex subset of $U_x\hyp$ whose polar in $U_x\hyp$ is exactly $G(x)$.
\end{remark}

In the Euclidean case, provided that $\mu(\Sm)=\sigma(\Sm)$, the necessary and sufficient condition for a measure to be the curvature measure of a convex set is:
\begin{definition}\label{def-Alexandrov}
	A measure $\mu$ on $\Sm$ satisfies \textit{Alexandrov's condition} if for any convex $\omega\in\cC$ with $\omega\not=\Sm$,
	$$
	\sigma(\omega^*) < \mu(\Sm\setminus\omega).
	$$
\end{definition}
As observed by A.D. Alexandrov in \cite{Alexandrov_book}, this condition is also satisfied by the curvature measure of a convex polyhedron in $\Hbb^3$. In the next proposition, we prove this condition holds for arbitrary convex bodies in the hyperbolic space.
\begin{proposition}\label{prop-CN_Alexandrov}
	Given $\Omega$  a convex body of $\hyp$ with the point $o$ in its interior, the curvature measure $\mu$ of $\Omega$ satisfies Alexandrov's condition.
\end{proposition}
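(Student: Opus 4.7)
The proof runs parallel to that of Proposition \ref{prop-CN_total_curvature}: the main tool is the Cauchy-Crofton formula of Appendix \ref{CCrofton}, reused through the monotonicity of de Sitter areas (Proposition \ref{le_monoto_measu}). The first move is to unpack the angle constraint that is already present in Proposition \ref{prop-S_T_equality_case}. If $\xi\in T(\eta)$, then $\langle\eta,\xi\rangle=\tanh h(\eta)/\tanh r(\xi)>0$, so the spherical distance satisfies $d(\eta,\xi)<\pi/2$. Since every point of $\omega$ lies at spherical distance $\ge\pi/2$ from each $\eta\in\omega^*$, this forces $T(\omega^*)\subset\Sm\setminus\omega$. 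Geometrically, this translates into the inclusion
$$
Q^{-1}(\omega^*)\subset G(P(\Sm\setminus\omega))\subset\partial\Omega^*,
$$
so $\sigma_{\partial\Omega^*}(Q^{-1}(\omega^*))\le\mu(\Sm\setminus\omega)$, and the problem reduces to the strict inequality $\sigma_{\partial\Omega^*}(Q^{-1}(\omega^*))>\sigma(\omega^*)$.

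For the strict area comparison, both sides are areas of space-like $m$-dimensional hypersurfaces-with-boundary in $\dS$, both with projection $\omega^*$ under $Q$: the cap $Q^{-1}(\omega^*)\subset\partial\Omega^*$ sits strictly above the equator at height $h(\eta)>0$, whereas $\omega^*\subset\Sm$ lies on the equator itself. Applying Theorem \ref{thm-Crofton-conv} separately to each and subtracting yields
$$
\sigma_{\partial\Omega^*}(Q^{-1}(\omega^*))-\sigma(\omega^*)=\frac{m}{|\Sbb^{m-1}|}\int_{\cL_s}\!\bigl[\#(\gamma\cap Q^{-1}(\omega^*))-\#(\gamma\cap\omega^*)\bigr]\,d\ell(\gamma).
$$
Mimicking the argument in the proof of Proposition \ref{le_monoto_measu}, I would show that this integrand is nonnegative almost everywhere and strictly positive on a positive-measure subset of $\cL_s$. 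The relevant picture is the ``tent'' $R=\{c'_\eta(t):\eta\in\omega^*,\,0\le t\le h(\eta)\}$, whose boundary consists of the space-like pieces $\omega^*$ (bottom) and $Q^{-1}(\omega^*)$ (top), together with a lateral timelike piece above $\partial\omega^*$: a generic space-like geodesic meeting $\omega^*$ contributes equally to both counts (entering $R$ through the bottom and leaving through $Q^{-1}(\omega^*)$), while a positive-measure family of space-like geodesics enters $R$ through the lateral boundary and exits through $Q^{-1}(\omega^*)$ without meeting the equator, producing the strict excess.

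Combining both steps, $\mu(\Sm\setminus\omega)\ge\sigma_{\partial\Omega^*}(Q^{-1}(\omega^*))>\sigma(\omega^*)$, which is Alexandrov's condition. The main obstacle is the second step: the Jacobian $\cosh^{m+1}(h)/\cosh(r\circ T)$ of $Q^{-1}$ is not pointwise $\ge 1$ on $\omega^*$ (as can be seen on very elongated bodies), so the strict inequality is invisible to any local density comparison and only emerges after Cauchy-Crofton averages over all space-like geodesics. The technical core consists in counting intersections through the boundary and lateral timelike contributions of the tent $R$, which is exactly the type of analysis for which the generalization of the Cauchy-Crofton formula to hypersurfaces with boundary is developed in Appendix \ref{CCrofton}.
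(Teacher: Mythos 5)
Your first reduction is sound: $\xi\in T(\eta)$ forces $\langle\eta,\xi\rangle>0$, so indeed $T(\omega^*)\subset\Sm\setminus\omega$ and $\mu(\Sm\setminus\omega)\ge\sigma_{\partial\Omega^*}(Q^{-1}(\omega^*))$. But the strategy then breaks on the intermediate inequality $\sigma_{\partial\Omega^*}(Q^{-1}(\omega^*))>\sigma(\omega^*)$, for two reasons, the first of which is fatal. When $\omega$ is a closed hemisphere (or any convex set large enough that $\omega^*$ is $\sigma$-negligible), $\sigma(\omega^*)=0$ and $Q^{-1}(\omega^*)$ is a single point of $\partial\Omega^*$, so $\sigma_{\partial\Omega^*}(Q^{-1}(\omega^*))=0$ as well; the chain collapses to $\mu(\Sm\setminus\omega)\ge 0>0$, which is false. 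In other words you have discarded too much in passing from $G(P(\Sm\setminus\omega))$ to its proper subset $Q^{-1}(\omega^*)$, and the loss is total precisely in the regime where Alexandrov's inequality is most delicate. The paper's own proof avoids this by never localizing on $\omega^*$: it builds the auxiliary body $\Omega_1=\Omega\cap\Gamma_\omega$, so that $\omega^*$ appears as the spherical image of the vertex $o\in\dom_1$, and then compares the \emph{full} boundary areas $|\dom_1^*|<|\dom^*|$ via the strict monotonicity of Proposition~\ref{le_monoto_measu}; the desired inequality $\mu(\Sm\setminus\omega)>\sigma(\omega^*)$ then survives even when $\sigma(\omega^*)=0$.

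The second issue is that, even when $\sigma(\omega^*)>0$, the Crofton comparison you invoke is not licensed by Theorem~\ref{thm-Crofton-conv}. That theorem compares graph pieces $\Sigma_i=\{(h_i(\eta),\eta):\eta\in\omega\}$ where $h_1,h_2$ are radial functions of space-like convex bodies and the two graphs agree on $\partial\omega$ (either $\omega=\Sm$, or $\omega=\{h_1<h_2\}$). Your $\Sigma_1=\omega^*$ sits on the equator, which is not the boundary of any space-like convex body, and the boundary of your cap $Q^{-1}(\omega^*)$ lies at height $h>0$ while $\partial\omega^*$ lies at height $0$; these do not match. The ``tent'' picture with a timelike lateral wall is not one of the configurations treated in Appendix~\ref{CCrofton}, and the claim that the integrand $\#(\gamma\cap\Sigma_1)-\#(\gamma\cap\Sigma_2)$ is almost everywhere nonnegative is not clear there either (a geodesic can meet the equator twice without meeting the cap at all, or meet the cap twice without meeting $\omega^*$; the sign analysis in the proof of Theorem~\ref{thm-Crofton-conv} relies crucially on both pieces being boundaries of nested space-like convex bodies). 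Establishing a Cauchy--Crofton identity with lateral boundary terms would be new and substantial work beyond what the appendix provides.
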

\begin{proof}
	Let $\omega\varsubsetneq\Sbb^m$ be a closed convex set. Since the intersection of $\Omega$ with a totally geodesic submanifold going through $o$ is a convex body with $o$ in its relative interior, an easy induction on the dimension $m$ reduces the proof to the case where $\omega$ has non-empty interior.
	
	Consider the convex cone $\Gamma_\omega\subset\hyp$ defined by
	$$
	\Gamma_\omega = \{\exp_o(t\xi)\ |\ \xi\in\omega,\ t\in\R_+\},
	$$
	and set $\Omega_1=\Omega\cap\Gamma_\omega$. Since $o$ belongs to the interior of $\Omega$, $\Omega_1\varsubsetneq\Omega$. Morever we can apply a well-chosen isometry to $\Omega$ and $\Omega_1$ so that the origin belongs to the interior of their images. As noticed in Remark \ref{rem-dependance_base_point}, the total curvature of a hyperbolic convex body is preserved by isometry. Therefore, the monotonicity formula gives us
	$$|\dom_1^*| < |\dom^*|.$$ 
	The right-hand side is the total curvature $\mu(\Sm)$ of $\Omega$. By definition of $\Omega_1$ and noting that the curvature of $\Omega_1$ at the vertex $o$ is $\sigma(\omega^*)$, we get 
	$$|\dom_1^*| \geq \mu(\omega) + \sigma(\omega^*)$$
	which completes the proof.	
\end{proof}

Alexandrov condition can be sharpened, using the compactness of $\cF$ and $\cC$ for the Hausdorff distance:
\begin{proposition}\label{prop-A_alpha}
	Let $\mu$ be a measure on $\Sm$ with $\mu(\Sm)\ge\sigma(\Sm)$. The following are equivalent:
	\begin{enumerate}
		\item $\mu$ satisfies Alexandrov's condition.
		\item there exists $\alpha>0$ such that for any convex $\omega\in\cC$ with $\omega\not=\Sm$, $\mu$ satisfies
		$\sigma(\omega^*) + \alpha \le \mu(\Sm\setminus\omega)$.
	\end{enumerate}
\end{proposition}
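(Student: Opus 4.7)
The direction $(2)\Rightarrow(1)$ is immediate: if $\sigma(\omega^*)+\alpha\le \mu(\Sm\setminus\omega)$ for every proper convex $\omega\in\cC$, then in particular $\sigma(\omega^*)<\mu(\Sm\setminus\omega)$. So the substance is in $(1)\Rightarrow(2)$, which I plan to prove by contradiction together with a Hausdorff-compactness argument.

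Assume (1) holds but (2) fails. Then there exists a sequence $(\omega_n)_{n\in\N}$ in $\cC$ with $\omega_n\ne\Sm$ such that
\[
\mu(\Sm\setminus\omega_n)-\sigma(\omega_n^*)\longrightarrow 0.
\]
By Blaschke's selection theorem, $\cF$ is compact for the Hausdorff distance, and convexity passes to Hausdorff limits, so $\cC$ is compact. Extract a subsequence (still denoted $(\omega_n)$) converging in Hausdorff distance to some $\omega_\infty\in\cC$. The first small point is to argue that $\omega_\infty\ne\Sm$: each proper convex $\omega_n$ is contained in a closed hemisphere $H_n$, the antipodal point of the center of $H_n$ is at spherical distance $\pi/2$ from $\omega_n$, and hence $d_H(\omega_n,\Sm)\ge\pi/2$ for every $n$. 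Thus the Hausdorff limit $\omega_\infty$ is contained in a closed hemisphere and in particular is proper.

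The core of the argument is passing to the limit in both quantities. For the measure $\mu$, which is only assumed finite, I use one-sided continuity on closed sets: the inclusion $\omega_n\subset\overline{U_\varepsilon(\omega_\infty)}$ (valid for $n$ large, by Hausdorff convergence) gives $\mu(\omega_n)\le\mu(\overline{U_\varepsilon(\omega_\infty)})$, and letting $\varepsilon\downarrow 0$ yields $\limsup\mu(\omega_n)\le\mu(\omega_\infty)$, equivalently $\liminf \mu(\Sm\setminus\omega_n)\ge \mu(\Sm\setminus\omega_\infty)$. For the polar side, I exploit the $1$-Lipschitz identity $\omega^*=\{\zeta\in\Sm\mid d(\zeta,\omega)\ge\pi/2\}$ (since $\langle\xi,\zeta\rangle=\cos d(\xi,\zeta)$) together with $|d(\cdot,\omega_n)-d(\cdot,\omega_\infty)|\le d_H(\omega_n,\omega_\infty)$ to obtain the sandwich
\[
\{d(\cdot,\omega_\infty)\ge\tfrac{\pi}{2}+\varepsilon\}\subset\omega_n^*\subset\{d(\cdot,\omega_\infty)\ge\tfrac{\pi}{2}-\varepsilon\}
\]
for all sufficiently large $n$. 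Because $\omega_\infty^*$ is convex in $\Sm$, its topological boundary $\{d(\cdot,\omega_\infty)=\pi/2\}$ is $\sigma$-null (it is either a Lipschitz hypersurface or a lower-dimensional convex set). Sending $\varepsilon\downarrow 0$ in the sandwich and using that $\sigma$ is absolutely continuous then gives $\sigma(\omega_n^*)\to\sigma(\omega_\infty^*)$.

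Combining the two limits, from $\mu(\Sm\setminus\omega_n)-\sigma(\omega_n^*)\to 0$ and $\sigma(\omega_n^*)\to\sigma(\omega_\infty^*)$ I get $\mu(\Sm\setminus\omega_n)\to\sigma(\omega_\infty^*)$, whence
\[
\sigma(\omega_\infty^*)=\lim\mu(\Sm\setminus\omega_n)\ge\liminf\mu(\Sm\setminus\omega_n)\ge\mu(\Sm\setminus\omega_\infty),
\]
which contradicts Alexandrov's condition applied to the proper convex set $\omega_\infty\varsubsetneq\Sm$. The main obstacle is the passage to the limit on the $\sigma$-side: the quantity $\sigma(\omega_n^*)$ is not a priori continuous on $\cC$, and making the above sandwich honest relies both on the isometric identification $\omega^*=\{d(\cdot,\omega)\ge\pi/2\}$ peculiar to the spherical setting and on the Lebesgue-null boundary of convex sets in $\Sm$.
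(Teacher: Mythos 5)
Your proof is correct, and it follows the same strategy as the paper: the paper's proof of this proposition is delegated to \cite[Proposition~3.7]{Bertrand-2}, which is precisely the Hausdorff-compactness argument you carry out (the sentence preceding the statement in the paper even flags ``the compactness of $\cF$ and $\cC$ for the Hausdorff distance'' as the key tool). The two-sided passage to the limit — upper semicontinuity of $\mu$ on closed sets via $\omega_n\subset\overline{U_\varepsilon(\omega_\infty)}$ on one side, and the sandwich for $\sigma(\omega_n^*)$ using the $1$-Lipschitz identity $\omega^*=\{d(\cdot,\omega)\ge\pi/2\}$ together with the $\sigma$-nullity of $\{d(\cdot,\omega_\infty)=\pi/2\}=\partial\omega_\infty^*$ on the other — and the hemisphere bound $d_H(\omega_n,\Sm)\ge\pi/2$ ruling out $\omega_\infty=\Sm$, are exactly the points one must check, and you check them correctly.
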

\begin{proof}
	See \cite[Proposition 3.7]{Bertrand-2} where the result is proved when $\mu(\Sm)=\sigma(\Sm)$, the same proof applies under our assumptions.
\end{proof}
For $\alpha>0$ we will say that a measure $\mu$ satisfies the condition $(A_\alpha)$ if it satisfies condition (2) above. This condition can be stated in terms of open neighborhoods of closed sets in $\Sm$. For $C\in\cF$ and a number $\rho>0$, $C_\rho=\{\zeta\in\Sm\ |\ d(\zeta,C)<\rho\}$ denotes the open $\rho$-neighborhood of $C$.
\begin{proposition}\label{prop-Alexandrov}
	Let $\mu$ be a measure on $\Sm$ such that $\mu(\Sm)\ge\sigma(\Sm)$. The following are equivalent:
	\begin{enumerate}
		\item there exists $\alpha>0$ such that $\mu$ satisfies $(A_\alpha)$.
		\item there exists $\beta>0$ such that, for any closed set $C\in\cF$,
		
		$\mu(C)\le\sigma(C_{\frac{\pi}{2}-\beta}) + \mu(\Sm)-\sigma(\Sm)$. 
		\item there exists $\beta>0$ such that, for any closed set $C\in\cF$, 
		
		$\sigma(C)\le\mu(C_{\frac{\pi}{2}-\beta})$.
	\end{enumerate}
	Moreover, for any $\beta>0$ there exists $\alpha>0$ (only depending on $\beta$) such that any measure satisfying (2) or (3) with $\beta$ satisfies $(A_\alpha)$.
\end{proposition}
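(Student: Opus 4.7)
My plan is to treat the three implications separately: (2) $\Leftrightarrow$ (3) by a complement manipulation, (2) $\Rightarrow$ (1) with explicit $\alpha(\beta)$ via a geometric lemma on $\Sm$, and (1) $\Rightarrow$ (2) by Hausdorff compactness combined with polar duality. For the first, setting $\rho = \pi/2 - \beta$ and given a closed set $D$, I would apply the other condition to $C = \Sm \setminus D_\rho$; the triangle inequality forces $C_\rho \cap D = \emptyset$, and expanding the resulting chain of inequalities gives the desired implication. The argument is symmetric in the two directions and preserves $\beta$.

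For (2) $\Rightarrow$ (1), applying (2) to $C = \omega$ for a closed convex $\omega \subsetneq \Sm$ yields
\[
\mu(\omega^c) \;\geq\; \sigma(\omega_{\pi/2-\beta}^c) \;=\; \sigma(\omega^*) + \sigma(\omega_{\pi/2} \setminus \omega_{\pi/2-\beta}).
\]
The triangle inequality gives $(\omega^*)_\beta \subset \omega_{\pi/2-\beta}^c$, hence $(\omega^*)_\beta \setminus \omega^* \subset \omega_{\pi/2} \setminus \omega_{\pi/2-\beta}$, so it suffices to establish the geometric lemma: any closed convex $K$ with $\emptyset \neq K \subsetneq \Sm$ satisfies
\[
\sigma(K_\beta \setminus K) \;\geq\; \alpha_0(\beta) \;:=\; \tfrac{1}{2}\,\sigma(B(\cdot, \beta)) \;>\; 0.
\]
I would prove the lemma by picking $p \in \partial K$ and a supporting hemisphere $H$ at $p$ (coming from a supporting hyperplane to the convex cone $\sC(K) \subset \R^{m+1}$): the open half-ball $B(p, \beta) \cap H^c$ lies in $K_\beta \setminus K$ and has $\sigma$-measure exactly $\tfrac{1}{2}\sigma(B(p, \beta))$ by reflection symmetry across the totally geodesic hyperplane $\partial H$. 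Applying the lemma with $K = \omega^*$ (closed convex as an intersection of closed hemispheres) gives $(A_{\alpha_0(\beta)})$, settling the quantitative ``Moreover'' clause with $\alpha$ depending only on $\beta$ and $m$.

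For (1) $\Rightarrow$ (2), I would argue by contradiction: if (2) fails for every $\beta > 0$, pick $\beta_n \downarrow 0$ and closed $C_n$ with $\mu(C_n) > \sigma((C_n)_{\pi/2-\beta_n}) + \mu(\Sm) - \sigma(\Sm)$, and extract a Hausdorff limit $C_n \to C \in \cF$. Outer regularity of $\mu$ on closed sets yields $\limsup \mu(C_n) \leq \mu(C)$, and inner regularity of $\sigma$ on the open set $C_{\pi/2}$ combined with the inclusion $K \subset (C_n)_{\pi/2-\beta_n}$ for every compact $K \subset C_{\pi/2}$ and $n$ large yields $\liminf \sigma((C_n)_{\pi/2-\beta_n}) \geq \sigma(C_{\pi/2})$. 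Passing to the limit gives
\[
\mu(C) \;\geq\; \sigma(C_{\pi/2}) + \mu(\Sm) - \sigma(\Sm).
\]
The counterexample inequality forces $(C_n)_{\pi/2-\beta_n} \neq \Sm$, so I can extract a limit $\xi$ of witnesses $\xi_n$ with $d(\xi_n, C_n) \geq \pi/2 - \beta_n$, obtaining $d(\xi, C) \geq \pi/2$ and hence $C^* \neq \emptyset$. The key reduction uses $\tilde C := C^{**}$, which is closed convex with $\tilde C \subsetneq \Sm$ (by $C^* \neq \emptyset$) and satisfies $\tilde C_{\pi/2} = C_{\pi/2}$, thanks to the reflexivity $(C^{**})^* = C^*$ of double polars for closed convex sets. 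Combining with $C \subset \tilde C$,
\[
\mu(\tilde C) - \sigma(\tilde C_{\pi/2}) \;\geq\; \mu(C) - \sigma(C_{\pi/2}) \;\geq\; \mu(\Sm) - \sigma(\Sm),
\]
which contradicts $(A_\alpha)$ applied to $\tilde C$. The main obstacle will be the semicontinuity bookkeeping for the Hausdorff limit and the polar-polar identity $\tilde C_{\pi/2} = C_{\pi/2}$, which is what enables the reduction to the convex case where $(A_\alpha)$ can be invoked.
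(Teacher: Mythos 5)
Your proposal is correct and the overall architecture (deal with $(2)\Leftrightarrow(3)$ by complements, then close the cycle with a quantitative step to $(A_\alpha)$ and a compactness step back) matches the paper's, but the quantitative step is carried out by a genuinely different and arguably cleaner argument. For the implication $(2)/(3)\Rightarrow(1)$ the paper applies the coarea formula to $s\mapsto\sigma(\omega_{\hpi-\beta+s})$ and invokes the concavity of the isoperimetric profile of $\Sm$ (Bayle) to bound the derivative of the volume of neighborhoods from below; your geometric lemma instead takes a supporting hemisphere $H$ at a boundary point $p$ of the convex set $K=\omega^*$, notes $B(p,\beta)\cap H^c\subset K_\beta\setminus K$, and uses the reflection across $\partial H$ to conclude $\sigma(K_\beta\setminus K)\ge\tfrac12\sigma(B(p,\beta))$. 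This avoids the isoperimetric profile entirely and gives the explicit constant $\alpha=\tfrac12\sigma(B(\cdot,\beta))$, which is at least as sharp as what the paper's argument produces and is certainly more self-contained (the paper's chain via the isoperimetric profile is a heavier tool for what is needed). Your chain of inclusions $(\omega^*)_\beta\setminus\omega^*\subset\omega_{\hpi}\setminus\omega_{\hpi-\beta}$ via the triangle inequality is the correct bridge, and the lemma applies since $\omega^*$ is a nonempty closed convex proper subset of $\Sm$ whenever $\omega\in\cC\setminus\{\Sm\}$. For $(1)\Rightarrow(2)$ the paper outsources the compactness step to a cited proposition in an earlier work of the first author; your direct contradiction argument via a Hausdorff-convergent sequence $C_n\to C$ and the reduction to the convex set $C^{**}$ using $(C^{**})^*=C^*$ (so $\tilde C_{\hpi}=C_{\hpi}$) is in the same spirit but spells out the bookkeeping, and the semicontinuity estimates $\limsup\mu(C_n)\le\mu(C)$ and $\liminf\sigma((C_n)_{\hpi-\beta_n})\ge\sigma(C_{\hpi})$ are both correct for finite measures on the compact $\Sm$. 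In short: same skeleton, but your supporting-hemisphere reflection replaces the isoperimetric machinery with an elementary argument, which is the most notable improvement.
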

\begin{proof}
	For any $\rho>0$ and any $C\in\cF$, we have $(\Sm\setminus C_\rho)_\rho \subset \Sm\setminus C$. Using this and considering the closed set $\Sm\setminus C_{\hpi-\beta}$, the following is easy to check for any $\beta>0$ 
	$$
	\mu(C)\le\sigma(C_{\frac{\pi}{2}-\beta}) + \mu(\Sm)-\sigma(\Sm)\ \  \Leftrightarrow\ \ \sigma(C)\le\mu(C_{\frac{\pi}{2}-\beta}),
	$$
	which proves that (2) and (3) are equivalent.
	
	Assuming that (1) is satisfied by $\mu$, we prove (2) following \cite[Proposition 3.9]{Bertrand-2}. For $s\ge0$ and $C\in\cF$, consider $f_s(C)=\sigma(C_{\hpi-s})-\mu(C)$. We want to prove that $f_\beta\ge\sigma(\Sm)-\mu(\Sm)$ for some $\beta>0$. If $\conv(C)\not=\Sm$, using $C^*=\conv(C)^*$, $C\subset\conv(C)$, and $(A_\alpha)$, we get
	\begin{eqnarray}
		f_0(C) & = & \sigma(\Sm) - \sigma(C^*) - \mu(C) \nonumber \\
		 & \ge & \sigma(\Sm) + \alpha - \mu(\Sm\setminus\conv(C)) - \mu(C) \nonumber \\
		 & > & \sigma(\Sm)-\mu(\Sm). \nonumber
	\end{eqnarray}
	If $\conv(C)=\Sm$ then $C_\hpi=\Sm$ and $f_0(C)\ge\sigma(\Sm)-\mu(\Sm)$. The rest of the proof is identical to that of \cite[Proposition 3.9]{Bertrand-2}. 
		
	Assume that (3) is satisfied for some $\beta>0$. We will show that (1) is satisfied for some $\alpha>0$ depending only on $\beta$ (and not on $\mu$), proving the equivalence of the three conditions together with the last statement. Let $\omega\in\cC\setminus \{ \Sm\}$. 	For $s\in[0,\frac{\beta}{2}]$, consider $f(s)=\sigma(\omega_{\hpi-\beta+s})$. Using the coarea formula we infer that $f$ is differentiable a.e. and absolutely continuous; moreover $f'(s)=|\partial\omega_{\hpi-\beta+s}|\ge Is(\sigma(\omega_{\hpi-\beta+s}))$, where $Is(v)=\inf\{|\partial D|\ |\ \sigma(D)=v\}$ is the isoperimetric profile of $\Sm$.
	
	Since $\omega$ is convex, non-empty and different from $\Sm$, there exist $\eta_0$ and $\xi_0$ such that $\eta_0\in\omega\subset B(\xi_0,\hpi)$. Therefore, its $(\hpi-\beta+s)$-neighborhood satisfies 
	
	\begin{center} $B(\eta_0,\hpi-\beta)\subset \omega_{\hpi-\beta+s}\subset B(\xi_0,\pi-\frac{\beta}{2})$,\end{center} 
	and $v(\hpi-\beta)\le \sigma(\omega_{\hpi-\beta+s}) \le v(\pi-\frac{\beta}{2})$, where $v(\rho)$ is the volume of a ball of radius $\rho$ in $\Sm$. Defining $I_\beta=\min\{Is(v(\hpi-\beta)),Is(v(\pi-\frac{\beta}{2}))\}$, the concavity property of $Is$ \cite{Bayle} yields $f'(s)\ge I_\beta >0$ for a.e. $s\in[0,\frac{\beta}{2}]$. Therefore
	$$
	\sigma(\omega_\hpi) \ge f(\frac{\beta}{2}) \ge f(0) + \frac{\beta I_\beta}{2} = \sigma(\omega_{\hpi-\beta}) + \frac{\beta I_\beta}{2}.
	$$
	For $\alpha= \frac{\beta I_\beta}{2}$ and thanks to (3), we get
	\begin{eqnarray}
		\alpha + \sigma(\omega^*) & = & \alpha +\sigma(\Sm) - \sigma(\omega_\hpi) \nonumber \\
		 & \le & \sigma(\Sm) - \sigma(\omega_{\hpi-\beta}) \nonumber \\
		 & \le & \mu((\Sm\setminus\omega_{\hpi-\beta})_{\hpi-\beta}) \nonumber \\
		 & \le & \mu(\Sm\setminus\omega). \nonumber
	\end{eqnarray}
\end{proof}

Given $\beta>0$, we will say that $\mu$ satisfies $(B_\beta)$ if it satisfies (2) or (3) with $\beta$ in the above proposition.

%
\subsubsection*{The vertex condition}
There is another condition satisfied by the curvature measure of convex bodies. Because a convex body has non-empty interior and is bounded, the exterior angle is always less than $\frac{1}{2}\sigma(\Sm)$.
\begin{definition} \label{def-vertex-condition}
	A measure $\mu$ on $\Sm$ satisfies the \textit{vertex condition} if for any point $\xi\in\Sm$, $\mu(\{\xi\})<\frac{1}{2}\sigma(\Sm)$.
\end{definition}
\begin{proposition}
	Let $\Omega$ be a convex body of $\hyp$ with $o$ in its interior. Then, its curvature measure $\mu$ satisfies the vertex condition.
\end{proposition}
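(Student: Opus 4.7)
The plan is to identify $\mu(\{\xi\})$ with the exterior solid angle of $\Omega$ at the corresponding boundary point $x = P(\xi)\in\dom$, and then to bound this angle by exploiting that $\Omega$ has non-empty interior. By Proposition~\ref{prop-curvature_measure}(2), one has $\mu(\{\xi\}) = \sigma_{\partial\Omega^*}(G(x)) = \sigma_x(G(x))$, where $\sigma_x$ denotes the canonical measure on the Riemannian unit sphere $U_x\hyp$. Since $U_x\hyp$ is isometric to $\Sm$, $\sigma_x(U_x\hyp) = \sigma(\Sm)$, so the vertex condition reduces to showing
$$
\sigma_x(G(x)) < \tfrac{1}{2}\sigma_x(U_x\hyp)\quad\text{for every }x\in\dom.
$$

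The next step invokes Remark~\ref{rem-dual_convex_Gauss_map}: $G(x) = T_x^{*}$, the polar in $U_x\hyp$ of the spherically convex set $T_x$ of unit tangent vectors at $x$ pointing into the interior of $\Omega$. The key observation is that $T_x$ has non-empty interior in $U_x\hyp$. To see this, I would fix $p\in\Omega^{\circ}$, let $v_p\in U_x\hyp$ be the initial velocity of the minimal unit-speed geodesic from $x$ to $p$, and set $t_0 = d(x,p)$. By continuity of $\exp_x$, the set $\{v\in U_x\hyp : \exp_x(t_0 v)\in\Omega^{\circ}\}$ is an open neighborhood of $v_p$. For any such $v$, the geodesic joining the boundary point $x$ and the interior point $\exp_x(t_0 v)$ lies in $\Omega$ by convexity, and its relative interior lies in $\Omega^{\circ}$ by the standard fact that any convex combination of a boundary and an interior point of a convex body is interior. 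Hence $\exp_x(tv)\in\Omega^{\circ}$ for all $t\in(0,t_0)$, and $v\in T_x$.

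The conclusion then follows from a general property of polars on the sphere: any convex subset $T\subset\Sm$ containing a ball $B(\xi_0,\varepsilon)$ with $\varepsilon>0$ satisfies $\sigma(T^{*})<\tfrac{1}{2}\sigma(\Sm)$. Using $T^{*} = \Sm\setminus T_{\pi/2}$, a short triangle-inequality argument on $\Sm$ shows $T_{\pi/2}\supset B(\xi_0,\tfrac{\pi}{2}+\varepsilon)$: for $\zeta$ in the latter ball, one picks $\xi$ on the minimal geodesic from $\xi_0$ to $\zeta$ at distance less than $\varepsilon$ from $\xi_0$ and less than $\tfrac{\pi}{2}$ from $\zeta$. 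Since $B(\xi_0,\tfrac{\pi}{2}+\varepsilon)$ strictly contains a closed hemisphere, $\sigma(T_{\pi/2})>\tfrac{1}{2}\sigma(\Sm)$, whence $\sigma(T^{*})<\tfrac{1}{2}\sigma(\Sm)$. Applying this to $T = T_x\subset U_x\hyp\simeq\Sm$ yields the claim.

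There is no serious obstacle in this argument: all the ingredients are already developed in the paper, and the remaining spherical geometry is elementary. The only point needing attention is transferring the polar construction from $\Sm$ to $U_x\hyp$, which is automatic since the identification is by isometry.
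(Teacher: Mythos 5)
Your proof is correct and follows essentially the same approach as the paper's: identify $\mu(\{\xi\})$ with the exterior solid angle $\sigma_x(G(x))$ via Proposition~\ref{prop-curvature_measure}(2), identify $G(x)$ with the polar in $U_x\hyp$ of the spherically convex set of inward unit tangent directions via Remark~\ref{rem-dual_convex_Gauss_map}, observe that this set has non-empty interior, and conclude that its polar has measure strictly less than half that of the sphere. You merely spell out the elementary spherical-geometry estimate (the triangle-inequality argument showing the polar sits inside a closed ball of radius $\tfrac{\pi}{2}-\varepsilon$) that the paper leaves implicit in the phrase "contained in an open hemisphere".
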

\begin{proof}
	Let $\xi\in\Sm$ and set $x=P(\xi)\in\dom$. The set $\omega$ of unit vectors in $U_x\hyp$ pointing towards the interior of $\Omega$ has non-empty interior in $U_x\hyp$. Therefore, its polar $\omega^*$ is a closed set of  $U_x\hyp$ contained in an open hemisphere. Following Remark \ref{rem-dual_convex_Gauss_map} and Proposition \ref{prop-curvature_measure}, we get $\mu(\{\xi\}) = \sigma_x(\omega^*) < \frac{1}{2}\sigma(\Sm)$.
\end{proof}

Notice that the vertex condition does not appear in the Euclidean setting. Indeed, it is a consequence of Alexandrov's condition combined with  $\mu(\Sm)=\sigma(\Sm)$ (use Definition \ref{def-Alexandrov}  with $\omega=\{\xi\}$).

%
\subsection{Uniqueness of the convex body with prescribed curvature} \label{sec-Uniqueness}
%
The goal of this part is to prove the following result which implies the uniqueness statement in the main Theorem.

\begin{theorem}
	Let $\Omega_1$ and $\Omega_2$ be two hyperbolic convex bodies with $o$ in their interior. Assume that $\Omega_1$ and $\Omega_2$ have the same curvature measure. Then, $\Omega_1=\Omega_2$.
\end{theorem}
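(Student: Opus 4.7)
I argue by contradiction. Suppose $\Omega_1\neq\Omega_2$ share the curvature measure $\mu$; then in particular $|\partial\Omega_1^*|=\mu(\Sm)=|\partial\Omega_2^*|$. If one of the two is contained in the other, say $\Omega_1\subset\Omega_2$, then $\Omega_2^*\subset\Omega_1^*$ and the strict version of Proposition \ref{le_monoto_measu} combined with the equality of boundary areas forces $\Omega_1^*=\Omega_2^*$, hence $\Omega_1=\Omega_2$, a contradiction. So I may assume $\Omega_1$ and $\Omega_2$ are incomparable, and the open sets $A=\{\eta\in\Sm\,:\,h_1(\eta)>h_2(\eta)\}$ and $B=\{\eta\in\Sm\,:\,h_2(\eta)>h_1(\eta)\}$ are both non-empty.

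Consider the auxiliary convex body $\Omega_\vee=\conv(\Omega_1\cup\Omega_2)$, whose polar is $\Omega_\vee^*=\Omega_1^*\cap\Omega_2^*$ with support function $\max(h_1,h_2)$. Since $A$ and $B$ are non-empty, $\Omega_\vee^*$ is strictly contained in each $\Omega_i^*$, so by strict monotonicity $|\partial\Omega_\vee^*|>\mu(\Sm)$. Moreover, $\partial\Omega_\vee^*$ coincides with $\partial\Omega_1^*$ over $Q^{-1}(A)$, with $\partial\Omega_2^*$ over $Q^{-1}(B)$, and with both over $Q^{-1}(\{h_1=h_2\})$. Combined with the analogous decompositions of each $\partial\Omega_i^*$, this transforms the global strict inequality into the two equivalent local ones
\[
|\partial\Omega_1^*\cap Q^{-1}(A)|>|\partial\Omega_2^*\cap Q^{-1}(A)|\quad\text{and}\quad|\partial\Omega_2^*\cap Q^{-1}(B)|>|\partial\Omega_1^*\cap Q^{-1}(B)|,
\]
both of which are themselves strict instances of Proposition \ref{le_monoto_measu} applied to the graphs of $h_1$ and $h_2$ over $A$ (resp.\ $B$); they rely on the Cauchy-Crofton formula for hypersurfaces with boundary proved in Appendix \ref{CCrofton}.

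To reach a contradiction, I translate these $\eta$-indexed inequalities onto the common curvature measure $\mu$ via the area formula \eqref{e_def_AreaMea} and the identity $|\partial\Omega_i^*\cap Q^{-1}(D)|=\mu_i(T_i(D))$, which follows from Lemma \ref{def_CurvMeas_Impli} and Proposition \ref{prop-S_T_equality_case}. Under $\mu_1=\mu_2=\mu$, the pair of local inequalities rewrites as
\[
\mu(T_1(A))>\mu(T_2(A))\quad\text{and}\quad\mu(T_2(B))>\mu(T_1(B)),
\]
while the same identity applied to arbitrary Borel $D\subset\Sm$ yields the master relation $|\partial\Omega_1^*\cap Q^{-1}(D)|=|\partial\Omega_2^*\cap Q^{-1}(T_2^{-1}(T_1(D)))|$. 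Specializing $D$ to $A$, $B$ and $\{h_1=h_2\}$ and combining these identities with the strict inequalities above, together with the global mass conservation $\mu(\Sm)=\mu(T_i(\Sm))$, should produce the desired incompatibility.

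The main obstacle is that the maps $T_1$ and $T_2$ depend on the respective bodies and coincide on no set one can identify \emph{a priori}, not even on the equality locus $\{h_1=h_2\}$, where the common support hyperplane may touch $\Omega_1$ and $\Omega_2$ at different points. Consequently $\mu_1=\mu_2$ does not translate into pointwise relations between $T_1$ and $T_2$. Overcoming this requires a delicate bookkeeping in which the Cauchy-Crofton formula for hypersurfaces with boundary (Appendix \ref{CCrofton}) plays an essential role by allowing one to compare boundary areas of the $\partial\Omega_i^*$ over different $Q$-cylinders without any smoothness assumption on $\partial\Omega_i$.
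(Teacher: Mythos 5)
Your strategy is the same as the paper's: use the Cauchy--Crofton formula to compare boundary areas of the polars over the set where the support functions differ, then translate back to $\mu$-inequalities via the contact maps $T_i$, $S_i$. However, the argument does not close, and you correctly identify where it breaks: you need to compare the sets $T_1(A)$, $T_2(A)$ (and their $B$- and equality-locus analogues) inside $\Sm$, yet you have no relation between $T_1$ and $T_2$ to do so. The two inequalities $\mu(T_1(A))>\mu(T_2(A))$ and $\mu(T_2(B))>\mu(T_1(B))$ by themselves are compatible; the global mass identity $\mu(T_1(\Sm))=\mu(T_2(\Sm))=\mu(\Sm)$ does not force a contradiction because the contributions over $\{h_1=h_2\}$ are uncontrolled.

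The missing idea is a \emph{cross-inclusion} between contact sets of the two bodies, which is exactly what replaces the pointwise comparison of $T_1$ and $T_2$ you were looking for. Working on a single side, say $\omega=\{h_1<h_2\}$, set $B_2=T_2(\omega)$ and $B_1=\{\xi\in\Sm\ |\ S_1(\xi)\subset\omega\}$. These are sets of \emph{directions} $\xi$, not normals $\eta$, and they refer to \emph{different} bodies. From the area formula and $\Sigma_i=\partial\Omega_i^*\cap Q^{-1}(\omega)$ one reads off $\mu(B_1)\le|\Sigma_1|$ and $\mu(B_2)\ge|\Sigma_2|$, so Cauchy--Crofton gives $\mu(B_1)<\mu(B_2)$. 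The crucial geometric fact, which does not require knowing anything about $T_1$ versus $T_2$ on $\{h_1=h_2\}$, is that $B_2\subset B_1$. It follows from elementary monotonicity: if $\xi_0\in T_2(\omega)$, pick $\eta\in\omega$ with $\tanh h_2(\eta)=\tanh r_2(\xi_0)\langle\xi_0,\eta\rangle$ (and $\langle\xi_0,\eta\rangle>0$); comparing with $\tanh h_1(\eta)\ge\tanh r_1(\xi_0)\langle\xi_0,\eta\rangle$ and $h_1(\eta)<h_2(\eta)$ gives $r_2(\xi_0)>r_1(\xi_0)$. Then for any $\zeta\in S_1(\xi_0)$, the definition of support yields
\[
\tanh h_2(\zeta)\ \ge\ \tanh r_2(\xi_0)\langle\xi_0,\zeta\rangle\ >\ \tanh r_1(\xi_0)\langle\xi_0,\zeta\rangle\ =\ \tanh h_1(\zeta),
\]
so $\zeta\in\omega$, proving $B_2\subset B_1$ and contradicting $\mu(B_1)<\mu(B_2)$. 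Once you have this inclusion, the symmetric set $B$ and the convex hull $\Omega_\vee$ are unnecessary; a single side suffices.
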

\begin{proof}
	The proof is by contradiction. Suppose $\Omega_1\neq \Omega_2$ have the same curvature measure $\mu$. Thus, according to the monotonicity of the total curvature proved in Proposition~\ref{le_monoto_measu}, neither $\Omega_1 \subsetneq \Omega_2$ nor $\Omega_2 \subsetneq \Omega_1$ can hold, otherwise the total curvatures of $\Omega_1$ and $\Omega_2$ would not be equal. Consequently, the open set $\omega=\{\eta \in \Sm|\, h_1(\eta) <h_2(\eta)\}$ is non-empty, and for $\Sigma_i=\{(h_i(\eta),\eta)\ |\ \eta\in\omega\}$, Theorem \ref{thm-Crofton-conv} yields  
	\begin{equation}\label{e-uni-proof}
		|\Sigma_2|-|\Sigma_1|= \frac{m}{|\Sbb^{m-1}|}\int_{\cL_s}(\#(\gamma\cap\Sigma_1)-\#(\gamma\cap\Sigma_2))\dl(\gamma)>0.
	\end{equation}
	For $i \in \{1,2\}$, let $T_i$ and $S_i$ be the mappings relative to $\Omega_i$ introduced in Section~\ref{sec-geometry_convex_bodies}. Now, let us introduce $B_1=\{\xi \in \Sm\ |\ S_1(\xi) \subset \omega\}$ and $B_2=T_2(\omega)$. Combining the properties of the mappings $S_1$ and $T_2$ (in particular the fact they are onto) together with the compactness of $\Sm$ yield that $B_1$ and $B_2$ are measurable sets. More precisely, $B_1$ is an open set while $B_2= (T_2(\omega^c))^c \cup (T_2(\omega)\cap T_2(\omega^c))$ is the union of an open set and a $\sigma$-negligible set.
	
	By definition of these sets, 
	$$
	\mu(B_1) = \sigma_{\dom_1^*} (G_1 \circ P_1 (B_1)) \leq |\Sigma_1|,
	$$
 	while
	$$
	\mu(B_2) = \sigma_{\dom_2^*} (G_2 \circ P_2 (B_2)) \geq |\Sigma_2|.
	$$
	Therefore, \eqref{e-uni-proof} yields $\mu(B_1) < \mu (B_2)$.
	
	The rest of the proof consists in showing $B_2 \subset B_1$ which leads to a contradiction with the previous estimate. Let $\xi_0 \in B_2$. Then, there exists $\eta \in \omega$ such that $\tanh h_2(\eta)=\tanh r_2(\xi_0)\langle \xi_0, \eta \rangle$, note that $\langle \xi_0, \eta \rangle>0$. Since $h_1<h_2$ on $\omega$, this yields $r_2(\xi_0) > r_1(\xi_0)$.
	
	Now, let $\zeta\in S_1(\xi_0)$. We must show $\zeta\in\omega$, namely $ h_1(\zeta)<h_2(\zeta)$. By definition of the support function,
	$$
	\tanh h_2(\zeta) \geq \tanh r_2(\xi_0)\langle\xi_0,\zeta\rangle > \tanh r_1(\xi_0)\langle\xi_0,\zeta\rangle
	$$
	since $\langle\xi_0,\zeta\rangle>0$ but $\tanh r_1(\xi_0)\langle\xi_0,\zeta\rangle = \tanh h_1(\zeta)$ by definition of $\zeta$. This completes the proof of $B_2 \subset B_1$ and contradicts the hypothesis $\Omega_1\neq \Omega_2$.
\end{proof}

%
%
\section{The prescription of curvature as an optimization problem} \label{sec-Alexandrov_meets_Kantorovich}
%
%
In this section we relate Alexandrov's problem to an optimization problem on $\Sm$ by noticing that the radial and support functions of a convex set give rise, by a simple transformation, to a pair of $c$-conjugate functions. This observation was first made by V. Oliker in the Euclidean case \cite{Oliker-2}. However, the optimization problem we get significantly differs from the Euclidean one because of the densities involved in Proposition \ref{def_CurvMeas_Impli}. We refer the reader to Appendix \ref{app-analysis} for a brief reminder on $c$-conjugate functions and related tools.
%
\subsection{From convex bodies to $c$-conjugate pairs} \label{sec-Convex_to_pairs}
%
Let $\Omega$ be a convex body in $\hyp$ with $o$ in its interior. From  \eqref{eqn_r_and_h} and Proposition \ref{prop-S_T_equality_case} we obtain, for any $\eta,\xi\in\Sm$,
$$
-\ln(\tanh h(\eta)) + \ln(\tanh r(\xi)) \le -\ln(\langle\eta,\xi\rangle)
$$
with equality if and only if $\eta\in S(\xi)( \Leftrightarrow \xi\in T(\eta))$. Defining the cost function $c$ on $\Sm\times \Sm$ by
\begin{equation} \label{eqn-cost_function}
	c(\eta,\xi)=\left\{\begin{array}{rl}
	-\ln(\langle\eta,\xi\rangle) & \mbox{ if } \langle\eta,\xi\rangle >0 \\
	+\infty & \mbox{ otherwise}
	\end{array}\right.,
\end{equation}
and writing $\varphi=-\ln(\tanh h)$, $\psi=\ln(\tanh r)$, the above inequality is equivalent to
$$
\varphi(\eta) + \psi(\xi) \le c(\eta,\xi)
$$
with equality if and only if $\eta\in S(\xi) (\Leftrightarrow \xi\in T(\eta))$. Therefore, 
$$
\varphi(\eta)=\inf\{ c(\eta,\xi)-\psi(\xi)\ |\ \xi\in\Sm\}\ \ \mbox{ and }\ \ \psi(\xi)=\inf\{c(\eta,\xi)-\varphi(\eta)\ |\ \eta\in\Sm\}
$$
which is equivalent to $\varphi=\psi^c$ and $\psi=\varphi^c$. Moreover, according to Definition \ref{def-c-concavity} and the remark afterwards, their $c$-superdifferentials satisfy $\partial^c\varphi=T$ and $\partial^c\psi=S$. Thus, $(\varphi,\psi)$ is a $c$-conjugate pair and the curvature measure of $\Omega$ is characterized (see Lemma \ref{def_CurvMeas_Impli})   by
\begin{equation}\label{eqn-phi_psi_mu}
(\partial^c\varphi)_\#\biggl(\frac{\sigma}{(1-\ex^{-2\varphi})^\frac{m+1}{2}}\biggr) = \frac{\mu}{\sqrt{1-\ex^{2\psi}}},
\end{equation}
where this equality makes sense because $(\partial^c\varphi)$ coincides $\sigma$-a.e with a measurable function.

Conversely, given $(\varphi,\psi)$ a pair of $c$-conjugate functions on $\Sm$ such that $\varphi>0$ and $\psi<0$, let
$$
r = \frac{1}{2}\ln\Bigl(\frac{1+\ex^\psi}{1-\ex^\psi}\Bigr) \qquad\mbox{and}\qquad h=\frac{1}{2}\ln\Bigl(\frac{1+\ex^{-\varphi}}{1-\ex^{-\varphi}}\Bigr),
$$
 we obtain
$$
\tanh h(\eta) \ge \tanh r(\xi)\langle\eta,\xi\rangle
$$
with equality if and only if $\xi\in\partial^c\varphi(\eta) (\Leftrightarrow \eta\in\partial^c\psi(\xi))$. Therefore, according to the Euclidean theory for which $h_E=\tanh h$ and $r_E= \tanh r$, $\Omega$ is a convex set with radial function $r$, support function $h$, and Gauss map $\partial^c\psi\circ P^{-1}$.

Now, Alexandrov's problem can be rephrased as a transport problem in the following way:
\begin{Alex-pb}
	Given a measure $\mu$ on $\Sm$ such that $\mu(\Sm)>\sigma(\Sm)$, $\mu$ satisfies Alexandrov's and vertex conditions, does there exist a $c$-conjugate pair $(\varphi,\psi)$ with $\varphi>0$, $\psi<0$, and \eqref{eqn-phi_psi_mu} holds?
\end{Alex-pb}
%
\subsection{A nonlinear Kantorovich problem}
%
Let $F:I\to\R$ and $G:J\to\R$ be two $C^1$ functions on open real intervals $I$ and $J$, where $J=-I=\{-x|\ x\in I\}$. Assume further that $F$ and $G$ are nondecreasing so that their derivatives $f=F'$ and $g=G'$ are nonnegative. For measurable functions $\varphi:\Sm\to I$ and $\psi:\Sm\to J$ we consider the functional
$$
\cK(\varphi,\psi) = \int_{\Sm} F(\varphi)d\sigma + \int_{\Sm} G(\psi)d\mu.
$$

For later use, let us introduce the following nonlinear Kantorovich problem:
\begin{NLK-pb}
	Find a pair $(\bar{\varphi},\bar{\psi})$ such that
	\begin{equation}
		\cK(\bar{\varphi},\bar{\psi}) = \max\left\{ \cK(\varphi,\psi)\ |\ (\varphi,\psi)\in\cA\right\},
	\end{equation}
	where the set of admissible pairs is
	\begin{multline*}
	\cA = \bigl\{ (\varphi,\psi)\in \cB(\Sm)^2\ \bigl|\ \varphi:\Sm\to I,\ \psi:\Sm\to J,\\
		F(\varphi)\in L^1(d\sigma),\ G(\psi)\in L^1(d\mu),\ 
		\forall\xi,\eta\ \ \varphi(\eta)+\psi(\xi)\le c(\eta,\xi)\bigr\},
	\end{multline*}
	where $\cB(\Sm)$ is the space of Borel functions on $\Sm$, and $c$ is defined by \eqref{eqn-cost_function}.
\end{NLK-pb}

The solutions to this problem, whenever they exist, are expected to be $c$-conjugate functions. The fact that $J=-I$ and the properties of $c$-conjugate pairs (see Proposition \ref{prop-min_c-concave} (2)) ensure that admissible $c$-conjugate pairs do exist.

In this part, we prove
\begin{theorem}\label{thm-from_NLK_to_transport}
	If $(NLK)$ admits a maximizing pair $(\varphi,\psi)\in\cA$ which is $c$-conjugate, then the $c$-superdifferential $\partial^c\varphi$  satisfies
	\begin{equation}\label{eqn-Euler-Lagrange}
		\partial^c\varphi_\#(f(\varphi)\sigma) = g(\psi)\mu,
	\end{equation}
	where  $\partial^c\varphi$ coincides $\sigma$-a.e. with a measurable map, $f=F'$, and $g=G'$.
\end{theorem}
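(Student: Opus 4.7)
The plan is to derive \eqref{eqn-Euler-Lagrange} as the Euler--Lagrange equation for the optimization problem $(NLK)$ at the maximizer $(\varphi,\psi)$. The idea is to perturb $\psi$ in the direction of an arbitrary bounded Borel function $v$, let $\varphi$ adjust through the $c$-transform so that admissibility is preserved, and extract the equation from the vanishing of the first variation of $\cK$.

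Concretely, for bounded Borel $v:\Sm\to\R$ and $t$ small I would set $\psi_t = \psi + tv$ and $\varphi_t = \psi_t^c$, so that $(\varphi_t,\psi_t)$ satisfies the pointwise inequality $\varphi_t(\eta)+\psi_t(\xi)\le c(\eta,\xi)$ and is admissible once integrability is verified. From the definition $\varphi_t(\eta) = \inf_\xi\bigl(c(\eta,\xi) - \psi(\xi) - tv(\xi)\bigr)$, picking $\xi_0 \in \partial^c\varphi(\eta)$ and $\xi_t \in \partial^c\varphi_t(\eta)$ yields the envelope sandwich
$$-t\,v(\xi_t) \le \varphi_t(\eta) - \varphi(\eta) \le -t\,v(\xi_0) \qquad (t>0),$$
with the reverse inequalities for $t<0$. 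These bounds give $|\varphi_t - \varphi| \le |t|\,\|v\|_\infty$, so $\varphi_t$ stays in $I$ and $F(\varphi_t)$ remains uniformly integrable against $\sigma$. Using the analysis of $c$-concave functions for the cost $c(\eta,\xi) = -\ln\langle\eta,\xi\rangle$ developed in Appendix \ref{app-analysis}, the $c$-superdifferential $\partial^c\varphi$ is single-valued $\sigma$-a.e. and agrees with a Borel map $T$; the pinching above then yields
$$\lim_{t\to 0}\frac{\varphi_t(\eta) - \varphi(\eta)}{t} = -v(T(\eta)) \qquad \sigma\text{-a.e.}$$

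A dominated-convergence argument (the envelope bounds together with $C^1$ regularity of $F,G$ provide the domination) then gives
$$0 = \frac{d}{dt}\cK(\varphi_t,\psi_t)\Big|_{t=0} = -\int (v\circ T)\,f(\varphi)\,d\sigma + \int v\,g(\psi)\,d\mu.$$
The first integral equals $\int v\,d\bigl(T_\#(f(\varphi)\sigma)\bigr)$ by the change-of-variables formula, so arbitrariness of $v$ forces $T_\#(f(\varphi)\sigma) = g(\psi)\mu$, which is exactly \eqref{eqn-Euler-Lagrange}. The main obstacle I expect is rigorously justifying the envelope/differentiation step: the $\sigma$-a.e. uniqueness of the argmin defining $\varphi$, Borel measurability of the selection $T$, and dominated convergence for $(\varphi_t - \varphi)/t$. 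All three should follow from properties of $c$-concave functions in Appendix \ref{app-analysis}, namely local Lipschitz regularity on $\{c<+\infty\}$, $\sigma$-a.e. differentiability, and a twist-type condition ensuring that $\xi \mapsto \nabla_\eta c(\eta,\xi)$ is locally one-to-one.
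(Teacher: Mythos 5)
Your strategy is essentially the paper's: perturb $\psi$, let $\varphi_t=\psi_t^c$, use the envelope sandwich to compute the first variation of $\cK$, and conclude by arbitrariness of the test function. The one genuine gap is your choice of test functions. You take $v$ to be merely bounded Borel, but the pointwise limit $\lim_{t\to 0}\bigl(\varphi_t(\eta)-\varphi(\eta)\bigr)/t = -v(T(\eta))$ does not follow from the sandwich $-v(\xi_t)\le\bigl(\varphi_t(\eta)-\varphi(\eta)\bigr)/t\le -v(T(\eta))$ (for $t>0$) unless $v$ is continuous at $T(\eta)$. Even after one shows that every accumulation point of $\xi_t$ as $t\to 0$ must equal $T(\eta)$ (the paper does this, using the uniform estimate $\|\varphi_t-\varphi\|_\infty\le|t|\|v\|_\infty$ and passing to the limit in the equality defining $\xi_t$), a Borel $v$ need not satisfy $v(\xi_t)\to v(T(\eta))$: take $v=\ind_{\{T(\eta)\}}$, which has $v(T(\eta))=1$ but $v(\xi_t)=0$ whenever $\xi_t\ne T(\eta)$. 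So the lower side of your sandwich does not close.

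The paper avoids this by testing only against continuous $\theta$, which suffices: two finite Borel measures on the compact metric space $\Sm$ that give the same integral for every continuous function are equal. Restricting your $v$ to $C^0(\Sm)$ repairs the proof; the rest of your argument (the uniform bound keeping $\varphi_t$ in $I$ for small $t$ and providing domination, the $\sigma$-a.e.\ single-valuedness of $\partial^c\varphi$ via Proposition~\ref{prop-min_c-concave}, and the change of variables writing $\int (v\circ T)\,f(\varphi)\,d\sigma=\int v\,d\bigl(T_\#(f(\varphi)\sigma)\bigr)$) then goes through as in the paper. You should also make explicit the argument that $\xi_t\to T(\eta)$ — namely that $\psi$ bounded forces $\xi_t$ to stay in a ball $B(\eta,\hpi-\varepsilon)$, and any subsequential limit $\xi$ satisfies $\varphi(\eta)+\psi(\xi)=c(\eta,\xi)$, hence equals $T(\eta)$ when $\partial^c\varphi(\eta)$ is single-valued — rather than leaving it as an anticipated obstacle.
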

\begin{proof}
	Let $(\varphi,\psi)\in\cA$ be a maximizing pair for the problem $(NLK)$ and assume that $\varphi=\psi^c$ and $\psi=\varphi^c$.
	
	Equation \eqref{eqn-Euler-Lagrange} is the Euler-Lagrange equation of  $\cK$ and derives from the computation of its derivative. For a continuous function $\theta:\Sm\to\R$, and $s$ small enough, the image of $\psi+s\theta$ lies in the open set $J$, therefore it makes sense to consider $\cK((\psi+s\theta)^c,\psi+s\theta)$ and its derivative $\frac{d}{ds}_{|_{s=0}}\cK((\psi+s\theta)^c,\psi+s\theta)$.
	
	\noindent\textbf{Claim 1.} \textit{$(\psi+s\theta)^c$ converges uniformly to $\varphi$ when $s\to 0$.}
	
	For any $\eta$, $\xi$ in $\Sm$, we have
	$$
	c(\eta,\xi)-\psi(\xi)-|s|\|\theta\|_\infty \le c(\eta,\xi)-\psi(\xi)-s\theta(\xi) \le c(\eta,\xi)-\psi(\xi)+|s|\|\theta\|_\infty,
	$$
	and taking the infimum on $\xi\in\Sm$, we get
	$$
	\|(\psi+s\theta)^c - \varphi \|_\infty \le |s|\|\theta\|_\infty
	$$
	which proves Claim 1.
	
	\noindent\textbf{Claim 2.} \textit{$\frac{d}{ds}_{|_{s=0}}\int_{\Sm}G(\psi+s\theta)d\mu=\int_{\Sm}\theta(\xi)g(\psi(\xi))d\mu(\xi)$.}
	
	Since $\psi$ and $\theta$ are continuous on $\Sm$ and $g=G'$ is continuous on $J$,  $\frac{\partial}{\partial s}G(\psi+s\theta)=\theta g(\psi+s\theta)$ is uniformly bounded on $\Sm$. Therefore, by differentiating $\int_{\Sm}G(\psi+s\theta)d\mu$ we get Claim 2.	
	
	\noindent\textbf{Claim 3.} \textit{$\frac{d}{ds}_{|_{s=0}}\int_{\Sm}F((\psi+s\theta)^c)d\mu=-\int_{\Sm}\theta(T\eta)f(\varphi(\eta))d\sigma(\eta)$, where $T=\partial^c\varphi$ $\sigma$-a.e..}
	
	Let $\eta\in\Sm$ be such that $\partial^c\varphi(\eta)= T(\eta)$ is single-valued. From the definition of $c$-transform and $c$-superdifferential, we get $\varphi(\eta)+\psi(T(\eta))=c(\eta,T(\eta))$ and $(\psi+s\theta)^c(\eta)\le c(\eta,T(\eta))-\psi(T(\eta))-s\theta(T(\eta))$. Therefore
	\begin{equation} \label{eqn-EL-1}
		(\psi+s\theta)^c(\eta) - \varphi(\eta) \le -s\theta(T(\eta)).
	\end{equation}
	On the other hand, for any $s\not=0$ there exists $\xi_s\in\Sm$ such that
	\begin{equation} \label{eqn-EL-2}
		(\psi+s\theta)^c(\eta) = c(\eta,\xi_s) - \psi(\xi_s) - s\theta(\xi_s).
	\end{equation}
	By combining \eqref{eqn-EL-1}, \eqref{eqn-EL-2}, and  $\varphi(\eta)+\psi(\xi_s)\le c(\eta,\xi_s)$, we obtain
	\begin{equation} \label{eqn-EL-3}
		s(\theta(T(\eta))-\theta(\xi_s)) \le (\psi+s\theta)^c(\eta)-\varphi(\eta) + s\theta(T(\eta)) \le 0.
	\end{equation}
	
	Since $\psi$ and $\theta$ are bounded, \eqref{eqn-EL-2} implies that $c(\eta,\xi_s)$ remains bounded and there exists $\varepsilon>0$ such that $\xi_s\in B(\eta,\frac{\pi}{2}-\varepsilon)$ for any $s$. If $\lim_{k\to+\infty}\xi_{s_k}=\xi$ for some sequence $(s_k)_{k\in\N}$, using Claim 1 and passing to the limit in \eqref{eqn-EL-2} gives $\varphi(\eta) + \psi(\xi) = c(\eta,\xi)$ which in turn implies $\xi=T(\eta)$.  The continuity of $\theta$ and \eqref{eqn-EL-3} yield
	\begin{equation} \label{eqn-EL-4}
	 (\psi+s\theta)^c(\eta) = \varphi(\eta) - s\theta(T(\eta)) + o(s),
	\end{equation}
and, since $F$ is $C^1$, we get
	\begin{equation} \label{eqn-EL-5}
		F((\psi+s\theta)^c(\eta)) = F(\varphi(\eta)) - s\theta(T(\eta))f(\varphi(\eta)) + o(s).
	\end{equation}
	Finally, since $f=F'$ is continuous while $\varphi$, $\psi$, and $\theta$ are bounded, there exists a constant $C$ such that for any $s$ 
	$$
	\Bigl|\frac{F((\psi+s\theta)^c(\eta)) - F(\varphi(\eta))}{s}\Bigl| \le C\frac{|(\psi+s\theta)^c(\eta) - \varphi(\eta)|}{|s|} \le C\|\theta\|_\infty,
	$$
	where  the last inequality follows from \eqref{eqn-EL-2}. Claim 3 is then a consequence of \eqref{eqn-EL-4} and the dominated convergence theorem, since $\partial^c\varphi$ is single-valued $\sigma$-a.e. according to Proposition \ref{prop-min_c-concave}.
	
	We are now in position to prove the theorem. Since $(\varphi,\psi)\in\cA$ is a maximizing pair, for any continuous function $\theta:\Sm\to\R$ the map $s\mapsto\cK((\psi+s\theta)^c,\psi+s\theta)$ attains its maximum at $s=0$. Using Claims 2 and 3 to compute its derivative, we get
	$$
	\int_{\Sm}\theta(\xi)g(\psi(\xi))d\mu(\xi) = \int_{\Sm}\theta(T(\eta))f(\varphi(\eta))d\sigma(\eta)
	$$
	for any continuous  $\theta$. This clearly  implies \eqref{eqn-Euler-Lagrange}.
\end{proof}

%
\subsection{Alexandrov meets Kantorovich}
%
In view of Theorem \ref{thm-from_NLK_to_transport} and \eqref{eqn-phi_psi_mu}, we consider the functions $f:]0,+\infty[\to\R$ and $g:]-\infty,0[\to\R$ defined by
$$
f(u) = \frac{1}{(1-\ex^{-2u})^\frac{m+1}{2}} \qquad\mbox{ and }\qquad g(v)=\frac{1}{\sqrt{1-\ex^{2v}}}.
$$
In order to write the optimization problem, we fix
\begin{equation} \label{eqn-defining_F_G}
	F(u)=\int_{u_0}^u \frac{ds}{(1-\ex^{-2s})^\frac{m+1}{2}} \qquad\mbox{ and }\qquad G(v)=v-\ln(1+\sqrt{1-\ex^{2v}})
\end{equation}
which satisfy $F'=f$ and $G'=g$.

As a corollary of the results in Sections 3.1 and 3.2, the existence of a solution to the hyperbolic Alexandrov problem  follows from that to the nonlinear Kantorovich problem associated to $F$ and $G$ above, provided this solution is a $c$-conjugate pair. We shall prove the existence of such a maximizing pair in the next section. Before that, we give some basic properties of  $F$, $G$, and $c$ that will be used in the proof.

Observe that the cost function of this Kantorovich problem is a convex function of the distance on $\Sm$. Namely, according to \eqref{eqn-cost_function},
$$
c(\eta,\xi)=\left\{\begin{array}{rl}
							-\ln(\cos d(\eta,\xi)) & \mbox{ if } d(\eta,\xi) < \hpi \\
							+\infty & \mbox{ otherwise}
						\end{array}\right..
$$

\begin{proposition}\label{prop-properties_F_G}
	The functions $F$ and $G$ are increasing, $F$ is concave and $G$ is convex. Moreover the following properties hold
	\begin{enumerate}
		\item For any $v\in (-\infty,0)$, $G(v)\le v$, and $u_0$ can be chosen in such a way that for any $u\in(0,+\infty)$, $F(u)\le u$.
		\item $G(-t) \sim_0 -\sqrt{2t}$.
		\item For any $0<R<\hpi$,  $\displaystyle \int_0^t\int_{B(\xi,R)}f(c(\eta,\xi)+s)d\sigma(\eta) ds \sim_0 \frac{\sigma(\Sm)}{2}\sqrt{2t} $.
	\end{enumerate}
\end{proposition}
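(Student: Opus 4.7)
First, the qualitative properties of $F$ and $G$ follow immediately from differentiation. Both are increasing since $F' = f > 0$ on $(0,+\infty)$ and $G' = g > 0$ on $(-\infty,0)$; and a check of signs of second derivatives shows $f$ decreasing and $g$ increasing on their respective domains, so $F$ is concave and $G$ convex. For the bounds in (1), $G(v) \le v$ is just $\ln(1+\sqrt{1-\ex^{2v}}) \ge 0$; for $F(u) \le u$ I would set $\phi(u) = u - F(u)$, so $\phi'(u) = 1 - f(u) \le 0$, and use the fact that $f(u) - 1 = O(\ex^{-2u})$ at $+\infty$ to pick $u_0$ large enough that $u_0 \ge \int_{u_0}^{+\infty}(f-1)$, which forces $\phi(+\infty) \ge 0$. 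Combined with $\phi > 0$ on $(0,u_0]$ (where $F \le 0$) and the monotonicity of $\phi$, this yields $F(u) \le u$ on $(0,+\infty)$.

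Item (2) is a short Taylor expansion: $1-\ex^{-2t} = 2t + O(t^2)$ gives $\sqrt{1-\ex^{-2t}} = \sqrt{2t}(1+O(t))$, hence $\ln(1+\sqrt{1-\ex^{-2t}}) = \sqrt{2t}(1+o(1))$ and $G(-t) = -t - \sqrt{2t}(1+o(1)) \sim -\sqrt{2t}$ as $t \to 0^+$.

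The real work is (3). I would use spherical coordinates $(\rho,\omega)$ centered at $\xi$, with $d\sigma(\eta) = \sin^{m-1}\rho\, d\rho\, d\omega$, and observe that $c(\eta,\xi) = -\ln\cos\rho$ yields $f(c(\eta,\xi)+s) = (1-\ex^{-2s}\cos^2\rho)^{-(m+1)/2}$. The Taylor expansions $1-\ex^{-2s}\cos^2\rho = (2s+\rho^2)(1+O(s+\rho^2))$ and $\sin^{m-1}\rho = \rho^{m-1}(1+O(\rho^2))$ suggest replacing the integrand by its model $(2s+\rho^2)^{-(m+1)/2}\rho^{m-1}$. The rescaling $\rho = \sqrt{2s}\,y$ then produces
\begin{equation*}
\int_0^R (2s+\rho^2)^{-(m+1)/2}\rho^{m-1}\, d\rho = \frac{1}{\sqrt{2s}}\int_0^{R/\sqrt{2s}}\frac{y^{m-1}\, dy}{(1+y^2)^{(m+1)/2}} \sim \frac{1}{\sqrt{2s}}\int_0^{\hpi}\sin^{m-1}\theta\, d\theta
\end{equation*}
as $s \to 0$, where the $y$-integral is identified via $y = \tan\theta$. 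Integrating in $s \in [0,t]$ produces the prefactor $\sqrt{2t}$, and the angular factor $|\Sbb^{m-1}|\int_0^{\hpi}\sin^{m-1}\theta\, d\theta = \frac{1}{2}|\Sm| = \frac{1}{2}\sigma(\Sm)$ gives the claimed equivalent.

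The main obstacle is converting this formal calculation into a genuine asymptotic with $o(\sqrt t)$ remainder. I would split $B(\xi,R) = B(\xi,\delta) \sqcup (B(\xi,R)\setminus B(\xi,\delta))$ for a small fixed $\delta > 0$: on the outer piece, $1 - \ex^{-2s}\cos^2\rho \ge \sin^2\delta$ yields a uniform bound on $f(c+s)$, so that contribution is $O(t) = o(\sqrt t)$. On $B(\xi,\delta)$, the Taylor bounds quoted above convert the original integral into the model one up to a multiplicative factor $1 + O(\delta^2)$, and dominated convergence on the rescaled integrand, with integrable majorant $y^{m-1}(1+y^2)^{-(m+1)/2}$, justifies passing to the limit in $s$. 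Letting $t \to 0$ and then $\delta \to 0$ closes the estimate.
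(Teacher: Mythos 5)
Your proposal is correct. For the monotonicity/concavity claims and items (1)--(2), the argument matches the paper's (with a minor variant on (1): you arrange $\lim_{u\to+\infty}\bigl(F(u)-u\bigr)\le 0$ by taking $u_0$ large, whereas the paper tunes $u_0$ continuously so the limit is exactly $0$; both suffice once one knows $F(u)-u$ is non-decreasing). The genuine divergence is in (3). The paper performs the \emph{exact} substitution $u=\sin r/\sqrt{\ex^{2s}-1}$, under which the denominator factors identically as $1-\ex^{-2s}\cos^2 r=(1-\ex^{-2s})(1+u^2)$; this produces an explicit prefactor $\ex^{(m+1)s}/\sqrt{\ex^{2s}-1}$ times an integral that converges to $\int_0^\infty y^{m-1}(1+y^2)^{-(m+1)/2}dy$ by dominated convergence, with no small-ball cutoff needed, and then $\int_0^t I(R,s)\,ds\sim_0\frac{\sigma(\Sm)}{2}\sqrt{2t}$ follows in one line by l'Hospital. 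Your route uses the approximate rescaling $\rho=\sqrt{2s}\,y$, which forces you to control the errors in $\cos^2\rho\approx 1-\rho^2$ and $\ex^{-2s}\approx 1-2s$ via a $\delta$-splitting of the ball and a double limit $t\to 0$ then $\delta\to 0$. This is more elementary and more robust (the same scheme would survive a perturbation of the cost), at the cost of extra bookkeeping; when writing it out in full, note that the multiplicative error on $B(\xi,\delta)$ is actually $1+O(\delta^2+s)$ rather than $1+O(\delta^2)$, though the extra $O(s)$ term contributes only $O(t^{3/2})=o(\sqrt t)$ after integration in $s$ and so does not harm the conclusion.
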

\begin{proof}
	The monotonicity of $F$ and $G$ is straightforward, the concavity of $F$ (resp, convexity of $G$) derives from  $f'\leq 0$ (resp. $g'\geq 0$).
	
	To prove (1), note the inequality involving $G$ is obvious. Regarding $F$, it is easy to see that $u\mapsto F(u)-u$ is non-decreasing;  besides, writing
	$$
	F(u)-u=\int_{u_0}^u \biggl(\frac{1}{(1-\ex^{-2s})^\frac{m+1}{2}}-1\biggr)ds - u_0,
	$$
	we get
	$$
	\lim_{u\to+\infty}F(u)-u=\int_{u_0}^\infty \biggl(\frac{1}{(1-\ex^{-2s})^\frac{m+1}{2}}-1\biggr)ds - u_0=l(u_0). 
	$$
	Since $l(u_0)$ depends continuously on $u_0$ while $\lim_{u_0\to+\infty}l(u_0)=-\infty$ and $\lim_{u_0\to 0}l(u_0)=+\infty$, we can choose $u_0$ such that $\lim_{u\to+\infty}F(u)-u=0$ and the inequality follows.
	
	Item (2) derives easily from the definition of $G$.
	
	To prove (3), let $I(R,s)=\int_{B(\xi,R)}f(c(\eta,\xi)+s)d\sigma(\eta)$. Writing the measure $\sigma$ in normal coordinates centered at $\xi$, it holds 
	\begin{eqnarray} \label{eqn-F-G-1}
		I(R,s) & = & |\Sbb^{m-1}| \int_0^R \frac{\sin^{m-1}r dr}{(1-\ex^{-2s}\cos^2r)^\frac{m+1}{2}} \nonumber \\
		 & = & |\Sbb^{m-1}| \frac{\ex^{(m+1)s}}{\sqrt{\ex^{2s}-1}} \int_0^{\frac{\sin R}{\sqrt{\ex^{2s}-1}}} \frac{u^{m-1}du}{(1+u^2)^\frac{m+1}{2}\sqrt{1-(\ex^{2s}-1)u^2}} \nonumber \\
		 & = &  |\Sbb^{m-1}| \frac{\ex^{(m+1)s}}{\sqrt{\ex^{2s}-1}} \int_0^{+\infty} \frac{u^{m-1}}{(1+u^2)^\frac{m+1}{2}}\frac{\ind_{[0,\frac{\sin R}{\sqrt{\ex^{2s}-1}}]}(u)}{\sqrt{1-(\ex^{2s}-1)u^2}}du
	\end{eqnarray}
	where we set $u=\frac{\sin r}{\sqrt{\ex^{2s}-1}}$ to get the second equality. Using $0<R<\hpi$, the dominated convergence theorem yields
	\begin{equation} \label{eqn-F-G-2}
		\lim_{s\to 0} \int_0^{+\infty} \frac{u^{m-1}}{(1+u^2)^\frac{m+1}{2}}\frac{\ind_{[0,\frac{\sin R}{\sqrt{\ex^{2s}-1}}]}(u)}{\sqrt{1-(\ex^{2s}-1)u^2}}du = \int_0^{+\infty} \frac{u^{m-1}du}{(1+u^2)^\frac{m+1}{2}}.
	\end{equation}
	Moreover, setting $u=\tan v$ we obtain
	$$
	\int_0^{+\infty} \frac{u^{m-1}du}{(1+u^2)^\frac{m+1}{2}} = \int_0^\hpi \sin^{m-1}v\,dv = \frac{\sigma(\Sm)}{2|\Sbb^{m-1}|},
	$$
	and combining this together with \eqref{eqn-F-G-1} and \eqref{eqn-F-G-2}, we get $I(R,s)\sim_0\frac{\sigma(\Sm)}{2\sqrt{2s}}$. This imply that $\int_0^tI(R,s)ds$ is well-defined and $\int_0^tI(R,s)ds\sim_0 \frac{\sigma(\Sm)}{2}\sqrt{2t}$
thanks to l'Hospital rule. \end{proof}

\begin{proposition}\label{prop-cost_L1}
	For any $\xi\in\Sm$, the function $\eta\mapsto c(\eta,\xi)$ is in $L^1(B(\xi,\hpi),\sigma)$ and $\int_{B(\xi,\hpi)} c(\eta,\xi)d\sigma(\eta)$ does not depend on $\xi$.
\end{proposition}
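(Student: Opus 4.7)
The plan is twofold: establish $\xi$-independence by symmetry, and reduce the integrability to a one-variable computation in polar coordinates.

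First, to see that $\int_{B(\xi,\pi/2)} c(\eta,\xi)d\sigma(\eta)$ does not depend on $\xi$, I would observe that for any two points $\xi,\xi'\in\Sm$, there exists an orthogonal transformation $A\in O(m+1)$ with $A\xi=\xi'$. Such an $A$ preserves the Euclidean inner product, hence preserves $c(\cdot,\cdot)$ and sends $B(\xi,\pi/2)$ onto $B(\xi',\pi/2)$; and it preserves $\sigma$, since $\sigma$ is the canonical measure on $\Sm$. A change of variable $\eta'=A\eta$ then gives equality of the two integrals.

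Next, to prove integrability, I would fix $\xi$ and use geodesic polar coordinates on $\Sm$ centered at $\xi$. Writing $r=d(\eta,\xi)$, we have $\langle\eta,\xi\rangle=\cos r$ so $c(\eta,\xi)=-\ln(\cos r)$, and the measure $\sigma$ decomposes as $\sin^{m-1}(r)\,dr\,d\sigma_{\Sbb^{m-1}}$. Hence
\begin{equation*}
\int_{B(\xi,\pi/2)} c(\eta,\xi)\,d\sigma(\eta) = |\Sbb^{m-1}|\int_0^{\pi/2} -\ln(\cos r)\,\sin^{m-1}(r)\,dr.
\end{equation*}
The integrand is continuous and nonnegative on $[0,\pi/2)$. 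At $r=0$ it vanishes (so no issue), and as $r\to\pi/2^-$ we have $\cos r = \sin(\pi/2-r)\sim \pi/2-r$, so $-\ln(\cos r)\sim -\ln(\pi/2-r)$, which is integrable near $\pi/2$; the factor $\sin^{m-1}(r)$ is bounded there. Hence the integral is finite.

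I don't anticipate any real obstacle here: both parts follow from very standard facts (rotation invariance of $\sigma$, and integrability of $-\ln$ against a bounded weight near an endpoint). The only thing to be slightly careful about is the $m=1$ case, where $|\Sbb^{0}|=2$ and the polar decomposition reads as a sum of two integrals over $(0,\pi/2)$, but the same bound applies.
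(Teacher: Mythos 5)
Your proof is correct and takes essentially the same approach as the paper: a change to geodesic polar coordinates centered at $\xi$ and the observation that $-\ln(\cos r)\sin^{m-1}r$ is integrable on $(0,\pi/2)$. The paper obtains the $\xi$-independence directly from the fact that the resulting one-dimensional integral has no remaining $\xi$-dependence, whereas you argue it separately via rotation invariance; this is a trivial variant, not a different route.
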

\begin{proof}
	Using normal coordinates centered at $\xi$, we get
	$$
	\int_{B(\xi,\hpi)} c(\eta,\xi)d\sigma(\eta) = -|\Sbb^{m-1}|\int_0^\hpi \ln(\cos r)\sin^{m-1}rdr
	$$
	which is easily seen to be finite and does not depend on $\xi$.
\end{proof}

In conclusion, a solution to Alexandrov's problem can be obtained by using the functional described in this part. This is the aim of the next part.  
%
%
\section{Solving the optimization problem} \label{sec-solving_NLK}
%
%
In this section, we consider
$$
\cK(\varphi,\psi) = \int_{\Sm} F(\varphi)d\sigma + \int_{\Sm} G(\psi)d\mu,
$$
where $F$ and $G$ are defined by \eqref{eqn-defining_F_G}.

We are going to solve: 
\begin{NLK-pb}
	Find a $c$-conjugate pair $(\bar{\varphi},\bar{\psi})$ such that
	\begin{equation}
	\cK(\bar{\varphi},\bar{\psi}) = \max\left\{ \cK(\varphi,\psi)\ |\ (\varphi,\psi)\in\cA\right\},
	\end{equation}
where	\begin{multline*}
		\cA = \bigl\{ (\varphi,\psi)\in \cB(\Sm)^2\ \bigl|\ \varphi:\Sm\to (0,+\infty),\ \psi:\Sm\to (-\infty,0),\  \varphi\in L^1(d\sigma),\\
			\forall\xi,\eta\ \ \varphi(\eta)+\psi(\xi)\le c(\eta,\xi)\bigr\},
	\end{multline*}
 $\cB(\Sm)$ denotes the space of Borel functions on $\Sm$, and $c$ is defined by \eqref{eqn-cost_function}.
\end{NLK-pb}

We further assume that $\mu$ satisfies $\mu(\Sm)>\sigma(\Sm)$ and Alexandrov's and vertex conditions (see Definitions \ref{def-Alexandrov} and \ref{def-vertex-condition}). 

Since $G\le 0$ and $F$ is sublinear, $\cK(\varphi,\psi)<+\infty$ holds whenever $\varphi\in L^1(d\sigma)$. Moreover, the measures $\sigma$ and $\mu$ being finite, the functional $\cK$ is not identically $-\infty$ on $\cA$,  for example $\cK(1,-1)>-\infty$. Therefore, the problem $(NLK)$ is well-posed.

\begin{theorem}\label{thm-Solving_NLK}
	Let $\mu$ be a measure on $\Sm$ such that $\mu(\Sm)>\sigma(\Sm)$. If $\mu$ satisfies Alexandrov's and vertex conditions then $(NLK)$ has a maximizing pair $(\varphi,\psi)\in\cA$. Moreover, any maximising pair is a $c$-conjugate pair.
\end{theorem}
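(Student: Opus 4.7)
The plan is to use the direct method of the calculus of variations, extending the variational approach of \cite{Bertrand-2} to the nonlinear setting. The argument splits into three main steps: reduction to $c$-conjugate pairs, extraction of a uniformly convergent maximizing sequence using all three conditions on $\mu$, and passage to the limit.

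\textbf{Reduction to $c$-conjugate pairs.} Given $(\varphi,\psi) \in \mathcal{A}$, the admissibility condition $\varphi(\eta)+\psi(\xi) \leq c(\eta,\xi)$ is equivalent to $\varphi \leq \psi^c$ (and symmetrically $\psi \leq \varphi^c$). Since $F$ and $G$ are nondecreasing, the pair $(\varphi,\varphi^c)$ satisfies $\mathcal{K}(\varphi,\varphi^c) \geq \mathcal{K}(\varphi,\psi)$, and iterating once more to $((\varphi^c)^c,\varphi^c)$ produces a $c$-conjugate pair with $\mathcal{K}$-value no smaller than the original. A short check, based on the upper semicontinuity of $c$-concave functions and the compactness of $\Sm$, shows that the constraints $\varphi>0$ and $\psi<0$ are preserved under this replacement: one has $\varphi^c(\xi)\le -\varphi(\xi)<0$ from $\xi=\eta$, and then $(\varphi^c)^c>0$ follows by separating the cases where the minimizing sequence of $\xi_n$ clusters at $\eta$ or stays away. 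Since $f=F'$ and $g=G'$ are strictly positive, this procedure strictly improves $\mathcal{K}$ whenever the pair is not already $c$-conjugate, which both yields the second assertion of the theorem and reduces the existence problem to maximizing over $c$-conjugate pairs.

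\textbf{Uniform bounds and compactness.} Let $(\varphi_n,\psi_n)$ be a maximizing sequence of $c$-conjugate pairs. The $c$-conjugacy yields the pointwise inequality $\varphi_n + \psi_n \leq 0$ on $\Sm$. Considering the one-parameter family $(\varphi_n + t,\psi_n - t) \in \mathcal{A}$ and the asymptotics from Proposition \ref{prop-properties_F_G} ($F(u) \leq u$ with $F'(u)\to 1$ at infinity, $G(v) \leq v$ with $G'(v)\to 1$ at $-\infty$), the total mass condition $\mu(\Sm) > \sigma(\Sm)$ forces $\mathcal{K}(\varphi_n+t,\psi_n-t) \to -\infty$ as $t \to +\infty$, while the singularity $F(u) \to -\infty$ at $u=0^+$ prevents $t$ from shrinking too much; we may therefore normalize each pair by an optimal translation. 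The core of the argument is then to prevent concentration: if $\psi_n$ plunged to $-\infty$ on a small set $E_n$ (forcing $\varphi_n$ to be very large on a nearby hemisphere by $c$-conjugacy), Alexandrov's condition in its strengthened form $(B_\beta)$ from Proposition \ref{prop-Alexandrov} would give $\mu(E_n^{\,\prime})$ comparable to $\sigma$ of a neighborhood of $E_n$, making the decrease of $\int G(\psi_n)\,d\mu$ outweigh any sublinear gain in $\int F(\varphi_n)\,d\sigma$ and contradicting maximality. The vertex condition plays the same role at isolated atoms of $\mu$. Combining these, one obtains uniform two-sided bounds on $\varphi_n$ and $\psi_n$, from which the $c$-concavity (Appendix \ref{app-analysis}) provides uniform equicontinuity and, by Arzelà--Ascoli, uniform convergence along a subsequence to a pair $(\bar\varphi,\bar\psi) \in \mathcal{A}$. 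The uniform limit of $c$-conjugate pairs with values in a fixed compact set is $c$-conjugate, so this property persists.

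\textbf{Passage to the limit.} Under the uniform bounds just obtained, $\varphi_n$ and $\psi_n$ range over compact subsets of $(0,+\infty)$ and $(-\infty,0)$ on which $F$ and $G$ are continuous, so $\int F(\varphi_n)\,d\sigma \to \int F(\bar\varphi)\,d\sigma$ and $\int G(\psi_n)\,d\mu \to \int G(\bar\psi)\,d\mu$, giving $\mathcal{K}(\bar\varphi,\bar\psi) = \sup_{\mathcal{A}}\mathcal{K}$. I expect the main obstacle to be the compactness step: producing the quantitative uniform estimates from Alexandrov's and vertex conditions requires a delicate balancing of $\sigma$- and $\mu$-masses on small neighborhoods, and will likely exploit the precise asymptotics $G(-t)\sim -\sqrt{2t}$ near $0$ together with the matching integral estimate in items (2) and (3) of Proposition \ref{prop-properties_F_G} to convert the topological conditions on $\mu$ into an explicit variational penalty against concentration.
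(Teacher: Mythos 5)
Your overall scaffolding (reduce to $c$-conjugate pairs, extract a uniformly convergent maximizing subsequence, pass to the limit) matches the paper's, and the reduction step and the limit-passage step are essentially right. But there are two substantive gaps, and you flag the first one yourself.

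\textbf{The compactness step is not a proof, and the sketched mechanism misses the key difficulty.} You write that if $\psi_n$ plunged to $-\infty$ on a small set, Alexandrov's condition $(B_\beta)$ would make the $\mu$-loss outweigh the $\sigma$-gain; but $\psi_n$ is a sequence of real-valued Lipschitz functions, so there is no ``plunging on a small set'' --- the genuine danger is that $\min \psi_n \to -\infty$ globally, or that $\psi_n$ escapes to $-\infty$ at \emph{different rates} on different pieces of $\spt(\mu)$. This last phenomenon is exactly what makes the argument delicate: a naive estimate of $\cK$ fails because, in regions where $\psi_n$ is much more negative, the associated weighted Vorono\"{\i} cells shrink in $\sigma$-measure, and one must compare $\sigma$-masses of cells against $\mu$-masses region by region. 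The paper's proof does this by discretizing $\mu$ via an $\varepsilon$-net of $\spt(\mu)$ (Lemma~\ref{lem-discretization_mu}), introducing a partial order $\preccurlyeq$ on the sequences $(\psi_i^k)_k$ so that they split into groups escaping at comparable rates, applying the strengthened Alexandrov condition $(A_\alpha)$ to the discretized measures (Lemma~\ref{lem-using_Alexandrov}), and then telescoping through the groups to reach $C_3 \le \psi_N^k(\mu(\Sm)-\sigma(\Sm))$, from which $\mu(\Sm)>\sigma(\Sm)$ finally gives the lower bound. Your translation argument $(\varphi_n+t,\psi_n-t)$ does show that the global normalization cannot drift in one direction, but for $c$-conjugate pairs this is already built in via $\max\varphi+\min\psi=0$ (Proposition~\ref{prop-min_c-concave}); it says nothing about the divergence-at-different-rates issue, which is where the hard work is.

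\textbf{The vertex condition is in the wrong place.} You claim it ``plays the same role at isolated atoms of $\mu$'' in obtaining uniform bounds. It does not: the uniform bound on $\psi_n$ comes only from Alexandrov's condition and $\mu(\Sm)>\sigma(\Sm)$. The vertex condition enters afterwards, and you do not seem to anticipate why: your compactness argument (even if completed) yields a limit pair with $\bar\psi\le 0$ and $\bar\varphi\ge 0$ --- closed inequalities --- so a priori you only get a maximizer of the relaxed problem over $\cA'$, not $\cA$. The paper handles this with a separate perturbation argument (Section~\ref{sec-solving_NLK}.2): if $\bar\psi(\xi_0)=0$, set $\psi^\varepsilon=\bar\psi-\varepsilon\ind_{B(\xi_0,\beta)}$, convexify, and compare $\cK$ using precisely the asymptotics in Proposition~\ref{prop-properties_F_G}(2)--(3); the leading term is $(\frac{\sigma(\Sm)}{2}-\mu(B(\xi_0,\beta)))\sqrt{2\varepsilon}$, and the vertex condition $\mu(\{\xi_0\})<\frac{1}{2}\sigma(\Sm)$ is exactly what makes this coefficient positive and produces a contradiction. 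Without distinguishing $\cA'$ from $\cA$ and without this separate step, the proof is incomplete even granting compactness.
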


The proof relies on an intricate compactness argument needed to prove that a maximizing sequence admits a converging subsequence in the $C^0$ topology. A priori, the limit of this sequence is a pair $(\varphi,\psi)$ of continuous functions with $\varphi\ge 0$ and $\psi\le 0$. But for Theorem~\ref{thm-from_NLK_to_transport} to apply, we need $\varphi>0$ and $\psi<0$.Thus, the proof splits in two parts. First, we consider the slightly relaxed problem where $\varphi$ and $\psi$ are allowed to vanish, that is the problem of maximizing $\cK$ over
\begin{multline*}
	\cA' = \bigl\{ (\varphi,\psi)\in \cB(\Sm)^2\ \bigl|\ \varphi:\Sm\to [0,+\infty),\ \psi:\Sm\to (-\infty,0],\  \varphi\in L^1(d\sigma),\\
	\forall\xi,\eta\ \ \varphi(\eta)+\psi(\xi)\le c(\eta,\xi)\bigr\}.
\end{multline*}
We prove this problem admits a maximizing pair $(\tilde{\varphi},\tilde{\psi})$. Then, we prove that any maximizing pair of the relaxed problem actually belongs to $\cA$, completing the proof of Theorem \ref{thm-Solving_NLK}.

In order to build a converging subsequence, we approximate $\mu$ by discrete measures with the same properties as $\mu$. This is enabled by the strengthened Alexandrov condition that we now recall. According to Proposition \ref{prop-Alexandrov}, there exists $\alpha>0$ such that for any convex $\omega$ of $\Sm$ with $\omega\not=\Sm$, 
$$
\sigma(\omega^*) + \alpha \le \mu(\Sm\setminus\omega).
$$

%
\subsection{There exists an optimal pair in $\cA'$}
%
Let $(\tilde{\varphi}^k,\tilde{\psi}^k)_{k\in\N}$ be a maximizing sequence relative to $(NLK)$, where $\tilde{\varphi}^k\ge 0$ and $\tilde{\psi}^k\le 0$. Since $\cK$ is not identically $-\infty$, we may assume  that $-\infty < \int_{\Sm}G(\psi_k)d\mu$. Because $G\le 0$, $G(\psi_k)\in L^1(d\mu)$ and $\psi_k$ must be finite $\mu$-a.e.. As a consequence of Alexandrov's condition, any point on the sphere is at distance less than $\hpi$ of $\spt(\mu)$, therefore the hypotheses of Proposition \ref{prop-continuity-c-transform} are satisfied. Since double convexification does not decrease $\tilde{\varphi}^k$ and $\tilde{\psi}^k$ (see Remark \ref{rem-double_convexification}), and consequently $\cK(\tilde{\varphi}^k,\tilde{\psi}^k)$, we can further assume that $(\tilde{\varphi}^k,\tilde{\psi}^k)_{k\in\N}$ is a sequence of $c$-conjugate pairs. In particular, according to Proposition \ref{prop-continuity-c-transform}, the functions $\tilde{\varphi}^k$ and $\tilde{\psi}^k$ are Lipschitz.

%
\subsubsection*{Discretization of the measure $\mu$} For $\varepsilon>0$ consider an $\varepsilon$-net $(\zeta_1,\dots,\zeta_N)$ of $\spt(\mu)$, namely a finite family of points in $\spt(\mu)$ such that $\spt(\mu)\subset\cup_{i=1}^NB(\zeta_i,2\varepsilon)$, and  $B(\zeta_i,\varepsilon)\cap B(\zeta_j,\varepsilon)=\emptyset$ whenever $i\not= j$.

Let $(W_1^o,\dots,W_N^o)$ be the Vorono\"{\i} domains of $\Sm$ associated to this net, namely, for all $i=1,\dots,N$,  $W_i^o=\{\xi\in\Sm\ |\ \forall j\not=i,\  d(\xi,\zeta_i)<d(\xi,\zeta_j)\}$. Then, define the  partition $(W_1,\dots,W_N)$ of $\Sm$ from the Vorono\"{\i} domains: $W_1=\overline{W_1^o}$, for $i=2,\dots,N$, $W_i=\overline{W_i^o}\setminus(W_1\cup\dots\cup W_{i-1})$.

For any $k\in\N$ and $i=1,\dots,N$, let $\xi_i^k$ be such that $\tilde{\psi}^k(\xi_i^k)=\max_{\overline{W}_i\cap\spt(\mu)}(\tilde{\psi}^k)$, and let $\mu_k$ be the discrete measure defined by
$$
\mu_k=\sum_{i=1}^N a_i\delta_{\xi_i^k}
$$
where $a_i=\mu(W_i)$. Notice that the definition of  $(W_i)_{1\leq i \leq N}$ yields $a_i>0$ for all $i=1,\dots,N$ and $\mu(\Sm)=\sum_{i=1}^N a_i=\mu_k(\Sm)$.

\begin{lemma}\label{lem-discretization_mu}
	For $\varepsilon$ small enough, there exist $\alpha>0$ and $\beta>0$ such that the following holds:
	\begin{enumerate}
		\item for any $k\in\N$, any $i=1,\dots,N$, and any $\xi\in W_i$,  $d(\xi,\xi_i^k)\le \hpi-\beta$.
		\item for any $k\in\N$, the measure $\mu_k$ satisfies Alexandrov's condition $(A_\alpha)$.
	\end{enumerate}
\end{lemma}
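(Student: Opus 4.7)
The plan is to leverage Proposition~\ref{prop-Alexandrov}: since $\mu$ satisfies Alexandrov's condition, there exists $\beta_0 > 0$ such that $\mu$ satisfies $(B_{\beta_0})$, i.e.\ $\sigma(C) \leq \mu(C_{\hpi - \beta_0})$ for every closed $C \subset \Sm$. Throughout the proof I would fix $\varepsilon < \beta_0/4$, set $\beta = \beta_0 - 4\varepsilon > 0$, and take for $\alpha$ the constant produced by the last statement of Proposition~\ref{prop-Alexandrov} applied to a measure satisfying $(B_\beta)$; crucially, this constant depends only on $\beta$, hence is the same for every $k$.

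The key geometric ingredient, used for both (1) and (2), is the inclusion
\[
W_i \cap \spt(\mu) \subset B(\xi_i^k, 4\varepsilon).
\]
Indeed, any $x \in W_i \cap \spt(\mu)$ lies in some ball $B(\zeta_{j_0}, 2\varepsilon)$ by the covering property of the net, and the Voronoi inequality $d(x, \zeta_i) \leq d(x, \zeta_{j_0})$ forces $d(x, \zeta_i) < 2\varepsilon$. Applying this both to $x = \xi_i^k$ (which belongs to $\overline{W}_i \cap \spt(\mu)$) and to an arbitrary $x \in W_i \cap \spt(\mu)$ and using the triangle inequality gives the claim.

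For (1), I would first apply $(B_{\beta_0})$ to the closed balls $\overline{B}(\xi, \delta)$ and let $\delta \to 0$ to deduce, via compactness of $\spt(\mu)$, that every $\xi \in \Sm$ satisfies $d(\xi, \spt(\mu)) \leq \hpi - \beta_0$. Given $\xi \in W_i$, choose $\zeta \in \spt(\mu)$ with $d(\xi, \zeta) \leq \hpi - \beta_0$ and then $\zeta_{j_0}$ with $d(\zeta, \zeta_{j_0}) < 2\varepsilon$; the Voronoi property yields $d(\xi, \zeta_i) \leq d(\xi, \zeta_{j_0}) < \hpi - \beta_0 + 2\varepsilon$, and combining with $d(\zeta_i, \xi_i^k) < 2\varepsilon$ one obtains $d(\xi, \xi_i^k) < \hpi - \beta$.

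For (2), the idea is to show that $\mu_k$ itself satisfies $(B_\beta)$ uniformly in $k$, whence $(A_\alpha)$ follows by Proposition~\ref{prop-Alexandrov}. Let $C \subset \Sm$ be closed. Any $x \in W_i \cap \spt(\mu) \cap C_{\hpi - \beta_0}$ satisfies
\[
d(\xi_i^k, C) \leq d(\xi_i^k, x) + d(x, C) < 4\varepsilon + \hpi - \beta_0,
\]
so whenever $\mu(W_i \cap C_{\hpi - \beta_0})$ is positive, $\xi_i^k$ lies in $C_{\hpi - \beta}$. Hence
\[
\mu(C_{\hpi - \beta_0}) = \sum_i \mu(W_i \cap C_{\hpi - \beta_0}) \leq \sum_{\xi_i^k \in C_{\hpi - \beta}} \mu(W_i) = \mu_k(C_{\hpi - \beta}),
\]
and combining with $(B_{\beta_0})$ for $\mu$ yields $\sigma(C) \leq \mu_k(C_{\hpi - \beta})$. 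The main (and only) subtle point is tracking the $4\varepsilon$ mass displacement caused by concentrating each $W_i \cap \spt(\mu)$ at $\xi_i^k$; this is absorbed into the slack of the strengthened Alexandrov condition by taking $\varepsilon$ small enough, so neither (1) nor (2) presents a serious difficulty.
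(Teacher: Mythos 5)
Your proof is correct and follows essentially the same route as the paper's: Proposition~\ref{prop-Alexandrov}(3) supplies a uniform $\beta_0$, the $\varepsilon$-net and Vorono\"{\i} estimates give $W_i\cap\spt(\mu)\subset B(\xi_i^k,4\varepsilon)$ and hence both $d(\xi,\xi_i^k)\le\hpi-\beta$ and $(B_\beta)$ for $\mu_k$, and the final statement of Proposition~\ref{prop-Alexandrov} yields an $\alpha$ depending only on $\beta$, hence uniform in $k$. The only (beneficial) difference is that you spell out the compactness argument showing $(B_{\beta_0})$ implies $d(\xi,\spt(\mu))\le\hpi-\beta_0$, which the paper asserts without detail.
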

\begin{proof}
	Since the measure $\mu$ satisfies Alexandrov's condition, there exists $\gamma>0$ such that, for any $\xi\in\Sm$, $d(\xi,\spt(\mu))\le\hpi-\gamma$. Considering $\xi\in W_i$, there exist $\zeta\in\spt(\mu)$ such that $d(\xi,\zeta)\le\hpi-\gamma$ and $j\in\{1,\dots,N\}$ such that $d(\zeta,\zeta_j)<2\varepsilon$. Therefore, $d(\xi,\zeta_j)\le\hpi-\gamma+2\varepsilon$ and $ W_i\subset \overline{W_i^o}  $ implies $d(\xi,\zeta_i)\le\hpi-\gamma+2\varepsilon$. From $\xi_i^k\in\overline{W}_i\cap\spt(\mu)$, we infer $d(\zeta_i,\xi_i^k)\le 2\varepsilon$ hence $d(\xi,\xi_i^k)\le\hpi-\gamma+4\varepsilon$. This proves (1) with $\beta=\gamma-4\varepsilon$ and $\varepsilon$ small enough. 
	
	According to Proposition \ref{prop-Alexandrov}, there exists $\gamma>0$ such that, for any $C\in\cF$, $	\sigma(C) \le \mu(C_{\hpi-\gamma})$. Fix such a set $C$  and assume $\sigma(C)>0$. Let $i\in\{1,\dots,N\}$ be such that $W_i\cap\spt(\mu)\cap C_{\hpi-\gamma}\not=\emptyset$. By what precedes, for any $k\in\N$,  $\xi_i^k\in C_{\hpi-\beta}$. Thus, for $I=\{i\ |\ W_i\cap\spt(\mu)\cap C_{\hpi-\gamma}\not=\emptyset\}$ and $J=\{i\ |\ \xi_i^k\in C_{\hpi-\beta}\}$,  we have checked that $I\subset J$ and
	$$
	\sigma(C) \le \mu(C_{\hpi-\gamma}) \le \mu\Bigl(\bigcup_{i\in I}W_i\cap\spt(\mu)\Bigr) = \sum_{i\in I}a_i \le \sum_{i\in J}a_i = \mu_k(C_{\hpi-\beta}).
	$$
	Therefore, according to Proposition \ref{prop-Alexandrov}, there exists $\alpha>0$ such that, for any $k\in\N$, $\mu_k$ satisfies Alexandrov's condition $(A_\alpha)$.
\end{proof}

Note that in the above statement the numbers $\alpha$ and $\beta$ does not depend on $k$. Therefore, for any $k\in\N$ the measures $\mu_k$ satisfy 
\begin{equation}\label{eqn-Alexandrov_mu_k}
	\sigma(\omega^*) + \alpha \le \mu_k(\Sm\setminus\omega)
\end{equation}
for any convex set $\omega\subset\Sm$ such that $\omega\not=\Sm$.

For all $i\in\{1,\dots,N\}$ and $k\in\N$, define $\psi_i^k=\tilde{\psi}^k(\xi_i^k)$, and consider the $N$ sequences of real numbers $\psi_1=(\psi_1^k)_{k\in\N},\dots,\psi_N=(\psi_N^k)_{k\in\N}$. By definition, $\psi_i^k=\max_{\overline{W}_i\cap\spt(\mu)}(\tilde{\psi}^k)$; by combining this together with Lemma \ref{lem-discretization_mu} (1) and Proposition \ref{prop-min_c-concave} (3),  we infer that the functions  $\tilde{\psi}^k$ are uniformly bounded from below provided that the sequences $\psi_1,\dots,\psi_N$ are.

Once this bound is established, we get a converging subsequence of the  $(\tilde{\psi}^k)_{k \in \N}$ thanks to Proposition \ref{prop-compacity}. The needed estimate on the sequences $\psi_1,\dots,\psi_N$ is proved in the next section.

%
\subsubsection*{Comparing sequences} A key point is to prove the sequences $\psi_1,\dots,\psi_N$ remain close to each other when $k$ goes to $+\infty$. To this end, we introduce the following notation.  Given two sequences $x=(x^k)_{k\in\N}$ and $y=(y^k)_{k\in\N}$ of real numbers, we write
$$
x \sim y\ \mbox{ if }\ \limsup|x^k-y^k| < +\infty
$$
and
$$
x \preccurlyeq y\ \mbox{ if }\ \limsup(x^k-y^k) < +\infty.
$$
The following proposition is easy to check.
\begin{proposition}
	\begin{enumerate}
		\item For two sequences $x=(x^k)_{k\in\N}$ and $y=(y^k)_{k\in\N}$, $x\sim y$ if and only if $x\preccurlyeq y$ and $y\preccurlyeq x$.
		\item $\sim$ is an equivalence relation on $\R^\N$ and the relation $\preccurlyeq$ gives rise to a partial order on the quotient space.
	\end{enumerate}
\end{proposition}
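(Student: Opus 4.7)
The plan is to reduce everything to the elementary observation that $|x^k-y^k|=\max(x^k-y^k,\;y^k-x^k)$ and that $\limsup$ commutes with $\max$ in an obvious way. Concretely, for part (1) I would argue that $\limsup_k|x^k-y^k|<+\infty$ holds if and only if both $\limsup_k(x^k-y^k)<+\infty$ and $\limsup_k(y^k-x^k)<+\infty$ hold, which by definition is precisely $x\preccurlyeq y$ and $y\preccurlyeq x$. The nontrivial direction only needs the bound
\[
|x^k-y^k|\le\max\bigl(x^k-y^k,\;y^k-x^k\bigr)+1\le (x^k-y^k)+(y^k-x^k)+\text{positive parts},
\]
or more cleanly: if both $x^k-y^k\le M$ and $y^k-x^k\le M$ eventually, then $|x^k-y^k|\le M$ eventually.

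For part (2), I would first verify that $\sim$ is an equivalence relation. Reflexivity is immediate since $|x^k-x^k|=0$, symmetry is immediate since $|x^k-y^k|=|y^k-x^k|$, and transitivity follows from the triangle inequality $|x^k-z^k|\le|x^k-y^k|+|y^k-z^k|$ together with the subadditivity $\limsup_k(a^k+b^k)\le\limsup_k a^k+\limsup_k b^k$ (valid here because both quantities on the right are finite by hypothesis).

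Next I would show $\preccurlyeq$ descends to the quotient and defines a partial order. For well-definedness, if $x\sim x'$ and $y\sim y'$ with $x\preccurlyeq y$, I would decompose
\[
x'^k-y'^k=(x'^k-x^k)+(x^k-y^k)+(y^k-y'^k)
\]
and apply subadditivity of $\limsup$ together with the bounds $\limsup|x'^k-x^k|<\infty$, $\limsup|y^k-y'^k|<\infty$, $\limsup(x^k-y^k)<\infty$ to conclude $x'\preccurlyeq y'$. Reflexivity is trivial, transitivity uses the same decomposition $x^k-z^k=(x^k-y^k)+(y^k-z^k)$ with subadditivity of $\limsup$, and antisymmetry on the quotient is exactly part (1).

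I do not anticipate any real obstacle: the statement is essentially a bookkeeping exercise, and the only point requiring a hair of care is checking that the subadditivity of $\limsup$ is applied only to sequences whose upper limits are finite (so that no $\infty-\infty$ ambiguity arises), which is guaranteed by the definitions of $\sim$ and $\preccurlyeq$.
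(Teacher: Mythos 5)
The paper gives no proof for this proposition; the authors simply state that it ``is easy to check.'' Your argument is a correct and complete routine verification of exactly the kind the authors had in mind, so there is nothing genuinely different to compare. The key points are all in place: $|x^k-y^k|=\max(x^k-y^k,\,y^k-x^k)$ together with the identity $\limsup_k\max(a^k,b^k)=\max(\limsup_k a^k,\limsup_k b^k)$ gives part (1); the triangle inequality and subadditivity of $\limsup$ (applied only where both upper limits are finite, as you note) give transitivity of $\sim$ and of the descended relation $\preccurlyeq$; well-definedness on the quotient follows from the three-term decomposition $x'^k-y'^k=(x'^k-x^k)+(x^k-y^k)+(y^k-y'^k)$; and antisymmetry on the quotient is precisely part (1). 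The only cosmetic blemish is your first displayed inequality, whose right-hand side $(x^k-y^k)+(y^k-x^k)+\text{positive parts}$ is confusing since the first two terms cancel; the ``more cleanly'' rephrasing that follows is the argument you actually need, and it is correct.
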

In what follows, we denote the equivalence classes of $x,y\in\R^\N$ by $[x]$ and $[y]$ respectively, and we write $[x]\prec[y]$ if $[x]\preccurlyeq[y]$ and $[x]\not=[y]$.
\begin{remark}\label{rem-ordering_sequences}
	Although the equivalence classes are only partially ordered, proper subsequences can always be compared. Namely, given  $x=(x^k)_{k\in\N}$ and $y=(y^k)_{k\in\N}$, if neither $[x]\preccurlyeq[y]$ nor $[y]\preccurlyeq[x]$ holds then $\limsup(x^k-y^k)=\limsup(y^k-x^k)=+\infty$. Therefore, it is possible to find  subsequences for which (keeping the notation unchanged) $\lim(y^k-x^k)=+\infty$.  This clearly implies $[x]\preccurlyeq[y]$ for these subsequences.
\end{remark}
This comparison relation allows us to order the sequences $\psi_1,\dots,\psi_N$. More precisely, the following comparison holds:
\begin{lemma}
	Up to taking subsequences of $\psi_1,\dots,\psi_N$ and renumbering them, there exist integers $s_1,\dots,s_p$ such that $1\le s_1 < s_2 < \dots s_p < N$ and 
	$$
	[\psi_1] = \dots = [\psi_{s_1}]
	\prec [\psi_{s_1+1}] = \dots = [\psi_{s_2}] 
	\prec [\psi_{s_2+1}] \dots [\psi_{s_p}] \prec [\psi_{s_p+1}]
	= \dots = [\psi_N].
	$$
Besides, one can assume $(\psi_i^k-\psi_{i+1}^k)_{k\in\N}$ converge and
$$
\lim(\psi_i^k-\psi_{i+1}^k) =
\left\{\begin{array}{rl}
		-\infty & \mbox{if } i\in\{s_1,\dots,s_p\} \\
		b_i\in\R & \mbox{otherwise}
\end{array}\right..
$$
In particular,  $\lim(\psi_i^k-\psi_j^k)= -\infty$ whenever there exists $l$ such that $i\le s_l < j$ while $\lim(\psi_i^k-\psi_j^k)\in\R$ otherwise.	
	
\end{lemma}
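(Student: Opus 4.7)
The plan is to reduce the statement to a purely order-theoretic fact about the equivalence classes $[\psi_1], \dots, [\psi_N]$ and then handle the convergence of consecutive differences by one further diagonal extraction. The main non-trivial input is Remark \ref{rem-ordering_sequences}, which says that any two sequences can be made comparable after passing to a subsequence; the rest will be bookkeeping.

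First, I would use Remark \ref{rem-ordering_sequences} to turn $\preccurlyeq$ into a total order on $\{[\psi_1],\dots,[\psi_N]\}$ along a common subsequence. Enumerate the pairs $\{(i,j)\ |\ 1\le i<j\le N\}$. For each pair in turn, either $[\psi_i]\preccurlyeq[\psi_j]$ or $[\psi_j]\preccurlyeq[\psi_i]$ already holds; otherwise, by Remark \ref{rem-ordering_sequences}, we can extract a common subsequence of all $N$ sequences along which, say, $\lim(\psi_j^k-\psi_i^k)=+\infty$, which forces $[\psi_i]\preccurlyeq[\psi_j]$. After $\binom{N}{2}$ such extractions (keeping the notation $\psi_i^k$ unchanged), the relation $\preccurlyeq$ is a total preorder on $\{[\psi_1],\dots,[\psi_N]\}$. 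Renumbering the indices accordingly, we may assume $[\psi_1]\preccurlyeq[\psi_2]\preccurlyeq\cdots\preccurlyeq[\psi_N]$. The indices $s_1<s_2<\dots<s_p$ are then defined as exactly those $i\in\{1,\dots,N-1\}$ for which $[\psi_i]\neq[\psi_{i+1}]$, i.e. $[\psi_i]\prec[\psi_{i+1}]$; this produces the chain of equalities and strict inequalities in the statement.

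Next, I would extract once more to obtain convergence of each of the $N-1$ consecutive difference sequences $(\psi_i^k-\psi_{i+1}^k)_k$ in $[-\infty,+\infty]$ (by Bolzano--Weierstrass plus the standard extraction to $\pm\infty$, applied successively for $i=1,\dots,N-1$). It remains to identify the limit. If $i\notin\{s_1,\dots,s_p\}$, then $[\psi_i]=[\psi_{i+1}]$, so by definition of $\sim$ the sequence $|\psi_i^k-\psi_{i+1}^k|$ is bounded, forcing the limit to be some $b_i\in\R$. If $i\in\{s_1,\dots,s_p\}$, then $[\psi_i]\prec[\psi_{i+1}]$ means $\limsup(\psi_i^k-\psi_{i+1}^k)<+\infty$ while $\limsup(\psi_{i+1}^k-\psi_i^k)=+\infty$ (otherwise we would have $[\psi_i]=[\psi_{i+1}]$); since we have already forced convergence, the limit must equal $-\infty$.

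Finally, for the last assertion, write
\begin{equation*}
\psi_i^k-\psi_j^k=\sum_{l=i}^{j-1}(\psi_l^k-\psi_{l+1}^k).
\end{equation*}
Each summand converges in $[-\infty,+\infty]$; the summands with $l\notin\{s_1,\dots,s_p\}$ have finite limit $b_l$, while those with $l\in\{s_1,\dots,s_p\}\cap[i,j-1]$ tend to $-\infty$. Thus the sum tends to $-\infty$ as soon as some $s_l$ lies in $[i,j-1]$, and converges to the finite sum $\sum b_l$ otherwise, which is exactly the dichotomy claimed. The only mild subtlety is making sure that the finitely many extractions above are all performed on a single common subsequence, which is automatic since at every step we pass to a subsequence of the current sequence; there is no analytic difficulty, the whole argument being a finite-combinatorial exercise on $\R^\N/\!\!\sim$.
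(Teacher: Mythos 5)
Your proposal takes a genuinely different route from the paper. The paper runs a finite insertion sort: it inserts $\psi_{i+1}$ into the already ordered $(\psi_1,\dots,\psi_i)$, and at each comparison extracts a subsequence \emph{realizing the $\limsup$} of the relevant difference, so that both the order and the stated convergence of the newly created consecutive difference are obtained in one pass. You instead sort all $\binom{N}{2}$ pairs first (using Remark \ref{rem-ordering_sequences}, and the fact that $\preccurlyeq$ is stable under further subsequence extraction, which you correctly rely on), and only afterwards extract for convergence of the $N-1$ consecutive differences. This is a legitimate alternative: it cleanly separates the order-theoretic step from the convergence step, and it makes the telescoping argument for the final assertion entirely transparent.

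There is, however, a genuine gap in your limit identification for $i\in\{s_1,\dots,s_p\}$. After you define the $s_l$'s, you perform another round of extractions to force convergence, and you then invoke $\limsup(\psi_{i+1}^k-\psi_i^k)=+\infty$ to conclude that the limit is $-\infty$. But $\limsup$ can only \emph{decrease} along a subsequence: while $\preccurlyeq$ (a finite $\limsup$) is stable under extraction, $\prec$ (an infinite $\limsup$ in the reverse direction) is not. A blind application of Bolzano--Weierstrass to the sequence $(\psi_i^k-\psi_{i+1}^k)_k$, which is bounded above but unbounded below, could perfectly well produce a subsequence converging to a finite number, after which $[\psi_i]=[\psi_{i+1}]$ holds on the new subsequence and your invocation of $\prec$ is no longer available. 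You need one of two fixes. Either extract so as to \emph{realize} $\liminf(\psi_i^k-\psi_{i+1}^k)$, which forces the limit to be $-\infty$ precisely when $i$ was an $s_l$ (this is what the paper does by choosing the subsequence realizing the relevant $\limsup$). Or keep an arbitrary convergence extraction, but only \emph{define} the $s_l$'s at the very end, after all extractions, as the indices $i$ for which $\lim(\psi_i^k-\psi_{i+1}^k)=-\infty$; since $\preccurlyeq$ persists, the displayed chain of equalities and strict inequalities still follows, the set of $s_l$'s simply may shrink. Without one of these adjustments, the line ``since we have already forced convergence, the limit must equal $-\infty$'' does not follow. The remaining parts of your argument (stability of the total preorder under extraction, boundedness of $|\psi_i^k-\psi_{i+1}^k|$ when $[\psi_i]=[\psi_{i+1}]$, and the telescoping sum for the ``in particular'' statement) are all correct.
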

\begin{proof}
	The proof is by finite induction on the number $N$ of sequences. Fix $i\in\{1,\dots,N-1\}$; assume 
	$$
	[\psi_1]\preccurlyeq[\psi_2]\preccurlyeq\dots\preccurlyeq[\psi_i]
	$$
	and, for each $j=1,\dots,i-1$, 
	$$\limsup(\psi_{j+1}^k-\psi_j^k)=\lim(\psi_{j+1}^k-\psi_j^k)\in\R\cup\{+\infty\}.$$
	
	 Let us now explain how to order the first $i+1$ sequences. The first step is to compare $[\psi_{i+1}]$ with $[\psi_i]$. There are three cases to consider:
	\begin{enumerate}
		\item if $[\psi_i]\preccurlyeq[\psi_{i+1}]$, extract  subsequences so that $\limsup(\psi_{i+1}^k-\psi_i^k)=\lim(\psi_{i+1}^k-\psi_i^k)\in\R\cup\{+\infty\}$.
		\item if $\limsup(\psi_{i+1}^k-\psi_i^k)=\limsup(\psi_i^k-\psi_{i+1}^k)=+\infty$, using Remark \ref{rem-ordering_sequences}, there exists a subsequence such that $[\psi_i]\preccurlyeq[\psi_{i+1}]$ and $\limsup(\psi_{i+1}^k-\psi_i^k)=\lim(\psi_{i+1}^k-\psi_i^k)\in\R\cup\{+\infty\}$.
		\item otherwise, $[\psi_{i+1}]\prec[\psi_i]$ and, up to extracting a subsequence, $\lim(\psi_{i+1}^k-\psi_i^k)=-\infty$.
	\end{enumerate}
	In the first two cases, we get $[\psi_1]\preccurlyeq[\psi_2]\preccurlyeq\dots\preccurlyeq[\psi_i]\preccurlyeq[\psi_{i+1}]$. In the last case, we then compare $[\psi_{i+1}]$ with $[\psi_{i-1}]$. After at most $i$ comparisons we get a total ordering of the first $i+1$  sequences which reads, after permuting the indices if necessary: $[\psi_1]\preccurlyeq[\psi_2]\preccurlyeq\dots\preccurlyeq[\psi_i]\preccurlyeq[\psi_{i+1}]$.
	
	The integers $s_1,\dots,s_p$ are the indices $i$ for which $[\psi_i]\prec[\psi_{i+1}]$.
\end{proof}
%

%
\subsubsection*{A modified maximizing sequence}
For each $k\in\N$, let us define the function $\varphi^k$ on $\Sm$  by
$$
\varphi^k(\eta)=\min_{i=1,\dots,N}\{c(\eta,\xi_i^k)-\psi_i^k\}.
$$
For $i=1,\dots,N$, let $V_i^k$ be  the ``weighted Vorono\"{\i} domains'' defined by
$$
V_i^k = \{\eta\in\Sm\ |\ \forall j=1,\dots,N\ \ c(\eta,\xi_i^k)-\psi_i^k\le c(\eta,\xi_j^k)-\psi_j^k \}.
$$
According to Lemma \ref{lem-discretization_mu},  the distance between an arbitrary point in  $\Sm$ and $\{\xi_1^k,\cdots,\xi_N^k\}$ is less than $\hpi$, thus $V_i^k\subset B(\xi_i^k,\hpi)$. Now, recall that $ \tilde{\varphi}^k$ and $\tilde{\psi}^k$ are $c$-conjugate while  $\psi_i^k= \max_{\bar{W}_i\cap \spt(\mu)} \tilde{\psi}^k$. Consequently,  $\tilde{\varphi}^k\le\varphi^k$ and $\varphi^k=c(.,\xi_i^k)-\psi_i^k$ on $V_i^k$.

\begin{lemma}\label{lem-using_Alexandrov}
	There exist $k_0\in\N$ and positive numbers $\varepsilon_1,\dots,\varepsilon_p$ such that for every $k\ge k_0$ and for every $ l\in\{1,\dots,p\}$, 
	$$
	 a_1+\dots+a_{s_l} \ge \sigma(V_1^k)+\dots+\sigma(V_{s_l}^k)+\varepsilon_l
	$$
\end{lemma}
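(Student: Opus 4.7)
The plan is to compare $U_l^k:=V_1^k\cup\cdots\cup V_{s_l}^k$ to the polar of a suitable spherically convex set containing $\{\xi_j^k:j>s_l\}$, and then to apply the strengthened Alexandrov condition \eqref{eqn-Alexandrov_mu_k} for $\mu_k$. The crucial asymptotic fact I will exploit is that, by the ordering of the sequences, $\psi_j^k-\psi_i^k\to+\infty$ whenever $i\le s_l<j$.

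The first step will be to show that the cells $V_i^k$ for $i\le s_l$ asymptotically localize away from $\{\xi_j^k:j>s_l\}$. For $\eta\in V_i^k$ and any index $j$, the definition of the weighted Vorono\"{\i} cell gives $c(\eta,\xi_j^k)\ge c(\eta,\xi_i^k)+\psi_j^k-\psi_i^k\ge\psi_j^k-\psi_i^k$ since $c\ge 0$, and hence $\langle\eta,\xi_j^k\rangle\le e^{-(\psi_j^k-\psi_i^k)}$. Setting $\delta_k:=\max_{i\le s_l<j}e^{-(\psi_j^k-\psi_i^k)}$, we have $\delta_k\to 0$ and the uniform inclusion $U_l^k\subset E_k:=\{\eta\in\Sm:\langle\eta,\xi_j^k\rangle\le\delta_k\text{ for all }j>s_l\}$.

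Next, let $\omega_k$ be the spherical convex hull of $\{\xi_j^k:j>s_l\}$. Since the polar depends only on the set and not on its convex hull, $\omega_k^*=\{\eta:\langle\eta,\xi_j^k\rangle\le 0\ \forall j>s_l\}$, and therefore $E_k\setminus\omega_k^*\subset\bigcup_{j>s_l}\{\eta\in\Sm:0\le\langle\eta,\xi_j^k\rangle\le\delta_k\}$. Each slab on the right has $\sigma$-measure bounded by a function of $\delta_k$ alone (independent of $\xi_j^k$, by isotropy of $\sigma$), which tends to $0$ with $\delta_k$. Hence $\sigma(U_l^k)\le\sigma(\omega_k^*)+\rho_k$ for some $\rho_k\to 0$ depending only on $\delta_k$ and $N-s_l$.

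To conclude, I distinguish two cases. If $\omega_k\neq\Sm$, then since $\xi_j^k\in\omega_k$ for $j>s_l$, we have $\mu_k(\Sm\setminus\omega_k)\le\sum_{i\le s_l}a_i$, so \eqref{eqn-Alexandrov_mu_k} gives $\sigma(\omega_k^*)+\alpha\le\sum_{i\le s_l}a_i$ and combining with the previous step yields $\sigma(U_l^k)\le\sum_{i\le s_l}a_i-\alpha+\rho_k$. In the degenerate case $\omega_k=\Sm$ one has $\omega_k^*=\emptyset$, so $\sigma(U_l^k)\le\rho_k$, which is itself less than $\sum_{i\le s_l}a_i-a_1/2$ for $k$ large since $a_1>0$. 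Taking $\varepsilon_l:=\min(\alpha/2,a_1/2)$ and picking a common $k_0$ so that $\rho_k<\varepsilon_l$ for all $k\ge k_0$ and all $l\le p$ finishes the proof. The main obstacle is the geometric estimate for the equatorial slab $E_k\setminus\omega_k^*$, together with the mild case analysis on whether the convex hull $\omega_k$ fills up all of $\Sm$; the latter situation turns out to be easier because $U_l^k$ is then automatically small.
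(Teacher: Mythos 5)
Your proof is correct and takes essentially the same approach as the paper's: you localize the cells $V_1^k,\dots,V_{s_l}^k$ away from the points $\xi_j^k$ ($j>s_l$) using $\psi_j^k-\psi_i^k\to+\infty$, compare their union to the polar of the spherical convex hull $\omega_k=\conv(\xi_{s_l+1}^k,\dots,\xi_N^k)$, and invoke the strengthened Alexandrov condition $(A_\alpha)$ for $\mu_k$, with the same case split on whether $\omega_k=\Sm$. The only cosmetic difference is that you encode the localization through the explicit quantity $\delta_k=\max e^{-(\psi_j^k-\psi_i^k)}\to 0$ and thin slabs $\{0<\langle\eta,\xi_j^k\rangle\le\delta_k\}$, whereas the paper fixes a small $\varepsilon>0$ and works with the annuli $B(\xi_j^k,\hpi)\setminus B(\xi_j^k,\hpi-\varepsilon)$; these are the same estimate.
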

\begin{proof}
	Fix $l\in\{1,\dots,p\}$. For any $k\in\N$ and any $i\in\{1,\dots,s_l\}$ we have, from the very definition of $V_i^k$,
	$$
	V_i^k \subset \{\eta\in B(\xi_i^k,\hpi)\ |\ \forall j=s_l+1,\dots,N\ \ \psi_j^k-\psi_i^k\le c(\eta,\xi_j^k) \}.
	$$
	Fix $\varepsilon>0$. Since $\lim(\psi_i^k-\psi_j^k)=-\infty$ for any $i,j$ such that $i\le s_l<j$, there exists $k_l\in\N$ such that
	$$
	\forall k\ge k_l\ \ \forall i\in\{1,\dots,s_l\}\ \ \forall j\in\{s_l+1,\dots,N\}\ \ \forall \eta\in V_i^k\ \ \ d(\eta,\xi_j^k)\ge\hpi-\varepsilon. 
	$$
	Therefore, for any $k\ge k_l$ and any $i,j$ such that $i\le s_l<j$, we obtain 
	$$V_i^k\cap B(\xi_j^k,\hpi-\varepsilon)=\emptyset.$$ 
	This gives $
	\forall k\ge k_l\ \ \ \ \  \Bigl(\bigcup_{i=1}^{s_l}V_i^k\Bigr)\cap\Bigl(\bigcup_{j=s_l+1}^NB(\xi_j^k,\hpi-\varepsilon)\Bigr) = \emptyset
	$. In other terms,
	\begin{equation}\label{eqn-using_Alexandrov}
		\forall k\ge k_l\ \ \ \ \ \bigcup_{i=1}^{s_l}V_i^k \subset \Sm\setminus\bigcup_{j=s_l+1}^NB(\xi_j^k,\hpi-\varepsilon).
	\end{equation}

	Let $\omega=\conv(\xi_{s_l+1}^k,\dots,\xi_N^k)\subset\Sm$ be the the convex hull of $\xi_{s_l+1}^k,\dots,\xi_N^k$. At this stage, there are two cases to consider depending on whether $\omega$  is the whole sphere or not.
	
	\textbf{First case:} $\conv(\xi_{s_l+1}^k,\dots,\xi_N^k)=\Sm$. This implies that the $\hpi$-neighborhood of $\{\xi_{s_l+1}^k,\dots,\xi_N^k\}$ is the whole sphere: $\Sm=\bigcup_{j=s_l+1}^NB(\xi_j^k,\hpi)$. Therefore, the above inclusion gives
	$$
	\forall k\ge k_l\ \ \ \ \ \bigcup_{i=1}^{s_l}V_i^k \subset \bigcup_{j=s_l+1}^NB(\xi_j^k,\hpi)\setminus B(\xi_j^k,\hpi-\varepsilon).
	$$
	Since $\sigma\bigl(B(\xi_j^k,\hpi)\setminus B(\xi_j^k,\hpi-\varepsilon)\bigr)$ does not depend on $\xi_i^k$, we get
	$$
	\forall k\ge k_l\ \ \ \ \ \sum_{i=1}^{s_l}\sigma(V_i^k) \le N\sigma\bigl(B(\xi,\hpi)\setminus B(\xi,\hpi-\varepsilon)\bigr),
	$$
	where $\xi \in \Sm$ is arbitrary. Since $a_1+\dots+a_{s_l}>0$, the following inequality is satisfied for $\varepsilon_l=\frac{1}{2}(a_1+\dots+a_{s_l})$ provided that $\varepsilon$ is small enough.

	$$
	\forall k\ge k_l\ \ \ \ \ \sum_{i=1}^{s_l}\sigma(V_i^k) \le a_1+\dots+a_{s_l} - \varepsilon_l
	$$
	and the lemma is proved in this case.

	\textbf{Second case:} $\conv(\xi_{s_l+1}^k,\dots,\xi_N^k)\not=\Sm$. Using that $\mu_k(\omega)\ge\sum_{j=s_l+1}^Na_j$, \eqref{eqn-Alexandrov_mu_k} gives
	\begin{equation}\label{eqn-2nd_case-1}
		\sum_{i=1}^{s_l}a_i \ge \mu_k(\Sm\setminus\omega) \ge \sigma(\omega^*) + \alpha.
	\end{equation}
	The definition of $\omega^*$ allows us to rewrite $\sigma(\omega^*)$ as 
	\begin{equation}\label{eqn-2nd_case-2}
		\sigma(\omega^*) = \sigma\Bigl(\Sm\setminus\bigcup_{j=s_l+1}^NB(\xi_j^k,\hpi)\Bigr) = \sigma(\Sm) - \sigma\Bigl(\bigcup_{j=s_l+1}^NB(\xi_j^k,\hpi)\Bigr).
	\end{equation}
	Writing $B(\xi_i^k,\hpi)=B(\xi_i^k,\hpi-\varepsilon)\cup \big(B(\xi_i^k,\hpi)\setminus B(\xi_i^k,\hpi-\varepsilon)\big)$, we get
	\begin{eqnarray}
			\sigma\Bigl(\bigcup_{j=s_l+1}^NB(\xi_j^k,\hpi)\Bigr) & \le & \sigma\Bigl(\bigcup_{j=s_l+1}^NB(\xi_j^k,\hpi-\varepsilon)\Bigr) \nonumber\\
			 & & + \sigma\Bigl(\bigcup_{j=s_l+1}^NB(\xi_i^k,\hpi)\setminus B(\xi_i^k,\hpi-\varepsilon)\Bigr) \nonumber\\
			 & \le & \sigma\Bigl(\bigcup_{j=s_l+1}^NB(\xi_j^k,\hpi-\varepsilon)\Bigr) + N\sigma\bigl(B(\xi,\hpi)\setminus B(\xi,\hpi-\varepsilon)\bigr). \nonumber
	\end{eqnarray}
	By combining this inequality together with \eqref{eqn-2nd_case-1} and \eqref{eqn-2nd_case-2}, we obtain
	$$
	\sum_{i=1}^{s_l}a_i \ge \sigma\Bigl(\Sm\setminus\bigcup_{j=s_l+1}^NB(\xi_j^k,\hpi-\varepsilon)\Bigr) - N\sigma\bigl(B(\xi,\hpi)\setminus B(\xi,\hpi-\varepsilon)\bigr) + \alpha.
	$$
	Choosing $\varepsilon$ small enough and using \eqref{eqn-using_Alexandrov}, there exists $k_l\in\N$ such that
	$$
	\forall k\ge k_l\ \ \ \ \ \sum_{i=1}^{s_l}a_i \ge \sum_{i=1}^{s_l}\sigma(V_i^k) + \frac{\alpha}{2}.
	$$
	
	Taking $k_0=\max(k_1,\dots,k_p)$ gives the result.
\end{proof}

%
\subsubsection*{The maximizing sequence admits a converging subsequence}
Since the elements of the maximizing sequence $(\tilde{\varphi}^k,\tilde{\psi}^k)_{k\in\N}$ are $c$-conjugate pairs, using Proposition \ref{prop-compacity} we are left with proving that the functions $\tilde{\psi}^k$ are uniformly bounded from below. We shall infer this bound from an upper bound on $\cK(\tilde{\varphi}^k,\tilde{\psi}^k)$. 

 First, using $\tilde{\varphi}^k\le\varphi^k$ and the definitions of $V_i^k$ and $\psi_i^k$, we get
\begin{eqnarray}\label{eqn-funct_upper_1}
	\cK(\tilde{\varphi}^k,\tilde{\psi}^k) & \le & \int_{\Sm}F(\varphi^k)d\sigma + \int_{\Sm}G(\tilde{\psi}^k)d\mu \nonumber\\
	 & \le & \sum_{i=1}^N \int_{V_i^k}F(c(\eta,\xi_i^k)-\psi_i^k)d\sigma(\eta) + \int_{W_i}G(\tilde{\psi}^k)d\mu \nonumber\\
	 & \le & \sum_{l=0}^p\sum_{i=s_l+1}^{s_{l+1}} \int_{V_i^k}F(c(\eta,\xi_i^k)-\psi_i^k)d\sigma(\eta) + a_iG(\psi_i^k)
\end{eqnarray}
where $s_0=0$ and $s_{p+1}=N$. According to Proposition \ref{prop-properties_F_G}, the functions $F$ and $G$ are sublinear, this gives for any $i$ and $k$,
\begin{eqnarray}\label{eqn-funct_upper_2}
	\int_{V_i^k}F(c(\eta,\xi_i^k)-\psi_i^k)d\sigma(\eta) + a_iG(\psi_i^k) & \le & \int_{V_i^k}c(\eta,\xi_i^k)d\sigma(\eta) + (a_i-\sigma(V_i^k))\psi_i^k \nonumber\\
	 & \le & C_1 + (a_i-\sigma(V_i^k))\psi_i^k
\end{eqnarray}
since $V_i^k\subset B(\xi_i^k,\hpi)$ and $c(.,\xi)\in L^1(B(\xi,\hpi),\sigma)$ for any $\xi$ (cf. Proposition \ref{prop-cost_L1}). Recall that for any $i\in\{s_l+1,\dots,s_{l+1}\}$, $\psi_i\sim\psi_{s_{l+1}}$ so that the sequence $(\psi_i^k-\psi_{s_{l+1}}^k)_{k\in\N}$ remains bounded, thus there exists a constant $A_i$ such that
\begin{eqnarray}
	(a_i-\sigma(V_i^k))\psi_i^k & = & (a_i-\sigma(V_i^k))\psi_{s_{l+1}}^k + (a_i-\sigma(V_i^k))(\psi_i^k-\psi_{s_{l+1}}^k) \nonumber\\
	 & \le & (a_i-\sigma(V_i^k))\psi_{s_{l+1}}^k + (\mu(\Sm)+\sigma(\Sm))A_i. \nonumber
\end{eqnarray}
Define $C_2=(\mu(\Sm)+\sigma(\Sm))\max\{A_1,\dots,A_N\}$ and insert the above inequality into \eqref{eqn-funct_upper_2}. Then, combining the resulting inequality with \eqref{eqn-funct_upper_1} yields
\begin{equation}\label{eqn-funct_upper_3}
	\cK(\tilde{\varphi}^k,\tilde{\psi}^k) \le N(C_1+C_2) + \sum_{l=0}^p \psi_{s_{l+1}}^k\sum_{i=s_l+1}^{s_{l+1}}(a_i-\sigma(V_i^k))
\end{equation}
This inequality combined with Lemma \ref{lem-using_Alexandrov}  is the key point to prove the lower bound on the sequences. Let us fix a constant $C_3$ such that for any $k$, $ C_3 \leq \cK(\tilde{\varphi}^k,\tilde{\psi}^k) -N(C_1+C_2)$. Since $\lim(\psi_{s_l}^k-\psi_{s_{l+1}}^k)=-\infty$ for $l=1,\dots,p$, we can assume, for any large $k$, $\psi_{s_1}^k \le \psi_{s_2}^k \le \dots \le \psi_{s_p}^k \le \psi_N^k$. Using these inequalities one after another together with Lemma \ref{lem-using_Alexandrov}, there exists a constant $C_3$ such that
\begin{eqnarray}\label{eqn-funct_upper_4}
	C_3 & \le & \psi_{s_1}^k\Bigl(a_1+\dots+a_{s_1}-\sigma(V_1^k)-\dots-\sigma(V_{s_1}^k)\Bigr) + \sum_{l=1}^p \psi_{s_{l+1}}^k\sum_{i=s_l+1}^{s_{l+1}}(a_i-\sigma(V_i^k)) \nonumber\\
	 & \le & \psi_{s_2}^k\Bigl(a_1+\dots+a_{s_2}-\sigma(V_1^k)-\dots-\sigma(V_{s_2}^k)\Bigr) + \sum_{l=2}^p \psi_{s_{l+1}}^k\sum_{i=s_l+1}^{s_{l+1}}(a_i-\sigma(V_i^k)) \nonumber\\
	 & \dots & \dots \nonumber\\
	 & \le & \psi_N^k\Bigl(a_1+\dots+a_N-\sigma(V_1^k)-\dots-\sigma(V_N^k)\Bigr)
\end{eqnarray}
In other terms, we have proved
$$
C_3 \le \psi_N^k(\mu(\Sm)-\sigma(\Sm)).
$$
Thus, $\mu(\Sm)>\sigma(\Sm)$ implies the sequence $\psi_N$ is bounded from below.

 Assume now that not all the sequences $\psi_1,\dots,\psi_N$ are equivalent. Then $p\ge 1$. Moreover,  writing again all the inequalities in \eqref{eqn-funct_upper_4} but the last one, then using Lemma \ref{lem-using_Alexandrov}, we obtain
\begin{eqnarray}
	C_3 & \le & \psi_{s_p}^k(a_1+\dots+a_{s_p}-\sigma(V_1^k)-\dots-\sigma(V_{s_p}^k)) \nonumber\\
	 & & + \psi_N^k(a_{s_p+1}+\dots+a_N-\sigma(V_{s_p+1}^k)-\dots-\sigma(V_N^k)) \nonumber\\
	 & \le & \psi_{s_p}^k\varepsilon_p + |\psi_N^k|(\mu(\Sm)+\sigma(\Sm)), \nonumber
\end{eqnarray}
which is a contradiction because $\psi_N$ is bounded and $\lim\psi_{s_p}^k=\lim(\psi_{s_p}^k-\psi_N^k)+\psi_N^k=-\infty$. Therefore, all the sequences are equivalent and uniformly bounded.

We are now in position to construct a converging subsequence. Fix $k\in\N$, let $\xi_0\in\Sm$ be a point where $\tilde{\psi}^k$ reaches its minimum. Let $i_0$ be such that $\xi_0\in W_{i_0}$. From Lemma \ref{lem-discretization_mu} we know that $d(\xi_0,\xi_{i_0}^k)\le\hpi-\beta$; combining this inequality with Proposition \ref{prop-min_c-concave}, we get
$$
\min_{\Sm}\tilde{\psi}^k =  \tilde{\psi}^k(\xi_0) \ge \tilde{\psi}^k(\xi_{i_0}^k) - C_4 = \psi_{i_0}^k -C_4 \ge C_5
$$
Since the functions $\tilde{\psi}^k$ are nonpositive, $(\tilde{\psi}^k)_{k\in\N}$ is uniformly bounded. Thus, Proposition \ref{prop-compacity} guarantees the maximizing sequence $(\tilde{\varphi}^k,\tilde{\psi}^k)_{k\in\N}$ admits a uniformly converging subsequence to some pair $(\tilde{\varphi},\tilde{\psi})\in\cA'$. We are left with proving $\lim\cK(\tilde{\varphi}^k,\tilde{\psi}^k)=\cK(\tilde{\varphi},\tilde{\psi})$. Since $G\le 0$ and the functions $\tilde{\psi}^k$ are uniformly bounded, we can apply the dominated convergence theorem and get $\lim\int_{\Sm}G(\tilde{\psi}^k)d\mu=\int_{\Sm}G(\tilde{\psi})d\mu$. On the other hand, the functions $F(\tilde{\varphi}^k)$ are uniformly bounded from above, thus Fatou's lemma implies $\lim\int_{\Sm}F(\tilde{\varphi}^k)d\sigma \le \int_{\Sm}F(\tilde{\varphi})d\sigma$. Since $(\tilde{\varphi},\tilde{\psi})\in\cA'$, equality must hold in this inequality and the proof is complete. 

Recall the functional does not decrease through the double convexification process, therefore we can assume  $(\tilde{\varphi},\tilde{\psi})$ to be a $c$-conjugate pair. Moreover, the function $\tilde{\varphi}$ being continuous on $\Sm$, it is bounded from above and $\cK(\tilde{\varphi},\tilde{\psi})<+\infty$. We have proved the problem $(NLK)$ is well-posed and admits a maximizing pair in $\cA'$.
%
\subsection{The components of an optimal pair do not vanish}
%
The maximizing pair obtained in the previous section could a priori vanish at some points. In order to apply Theorem \ref{thm-from_NLK_to_transport}, we must prove this maximizing pair is in $\cA$. To this end, we now prove that a $c$-conjugate pair vanishing at some point is not maximizing.

According to Proposition \ref{prop-min_c-concave} (2), it suffices to consider the case of a $c$-conjugate pair $(\varphi,\psi)$ such that $\psi(\xi_0)=0$ at some point $\xi_0\in\Sm$. We will construct a new pair $(\varphi^\varepsilon,\psi^\varepsilon)$ of admissible functions with $\cK(\varphi,\psi)<\cK(\varphi^\varepsilon,\psi^\varepsilon)$.

Since $\mu(\{\xi_0\})<\frac{1}{2}\sigma(\Sm)$, there exists $\beta>0$ such that $\mu(B(\xi_0,\beta))<\frac{1}{2}\sigma(\Sm)$. In what follows, we set $W_1$ the open ball defined by $W_1=B(\xi_0,\beta)$ and $W_2=\Sm\setminus W_1$ its complement. Since $\varphi=\psi^c$, we get
$$
\varphi(\eta) = \inf_{\xi\in\Sm}\bigl(c(\eta,\xi)-\psi(\xi)\bigr) = \inf\Bigl\{\inf_{\xi\in W_1}\bigl(c(\eta,\xi)-\psi(\xi)\bigr),\inf_{\xi\in W_2}\bigl(c(\eta,\xi)-\psi(\xi)\bigr)\Bigl\}.
$$
Let us set
$$
V_1=\Bigl\{\eta\in\Sm\ \Bigl|\ \inf_{\xi\in W_1}\bigl(c(\eta,\xi)-\psi(\xi)\bigr)<\inf_{\xi\in W_2}\bigl(c(\eta,\xi)-\psi(\xi)\bigr)\Bigr\},
$$
so that $\varphi(\eta)=\inf_{\xi\in W_1}\bigl(c(\eta,\xi)-\psi(\xi)\bigr)$ on $V_1$ and $\varphi(\eta)=\inf_{\xi\in W_2}\bigl(c(\eta,\xi)-\psi(\xi)\bigr)$ on $V_2=\Sm\setminus V_1$.

For $\varepsilon>0$, let $\psi^\varepsilon=\psi-\varepsilon\ind_{W_1}$ and $\varphi^\varepsilon=\inf_{\xi\in\Sm}(c(.,\xi)-\psi^\varepsilon(\xi))$ its $c$-conjugate function, in particular $(\varphi^\varepsilon,\psi^\varepsilon)$ is an admissible pair. As for $\varphi$, we define
$$
V_1^\varepsilon=\Bigl\{\eta\in\Sm\ \Bigl|\ \inf_{\xi\in W_1}\bigl(c(\eta,\xi)-\psi^\varepsilon(\xi)\bigr)<\inf_{\xi\in W_2}\bigl(c(\eta,\xi)-\psi^\varepsilon(\xi)\bigr)\Bigr\}
$$
and $V_2^\varepsilon=\Sm\setminus V_1^\varepsilon$.

The rest of the proof relies on the fact that $c(\eta,\xi)=\Lambda(d(\eta,\xi))$ where
$$
\Lambda(r) = \left\{\begin{array}{rl}
-\ln(\cos r) & \mbox{ if } r < \hpi \\
+\infty & \mbox{ otherwise.}
\end{array}\right.
$$
\begin{lemma}\label{lem-properties_V_1}
	The domains $V_1^\varepsilon$ satisfy the following properties:
	\begin{enumerate}
		\item if $\varepsilon_1\le\varepsilon_2$ then $V_1^{\varepsilon_2}\subset V_1^{\varepsilon_1}\subset V_1$,
		\item if $\varepsilon<\Lambda(\beta)$ then $\xi_0\in V_1^\varepsilon$,
		\item the $V_1^\varepsilon$ are open sets and $V_1=\bigcup_{\varepsilon<\Lambda(\beta)}V_1^\varepsilon$,
		\item $\varphi^\varepsilon=\varphi+\varepsilon$ on $V_1^\varepsilon$,
		\item $\varphi^\varepsilon=\varphi$ on $V_2$.
	\end{enumerate}
\end{lemma}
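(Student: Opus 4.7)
The plan is to introduce
$$A(\eta)=\inf_{\xi\in W_1}\bigl(c(\eta,\xi)-\psi(\xi)\bigr),\qquad B(\eta)=\inf_{\xi\in W_2}\bigl(c(\eta,\xi)-\psi(\xi)\bigr),$$
and observe that, since $\psi^\varepsilon=\psi-\varepsilon\,\ind_{W_1}$, the two infima defining $V_1^\varepsilon$ are simply $A+\varepsilon$ and $B$. Hence $V_1^\varepsilon=\{A+\varepsilon<B\}$ (in particular $V_1=V_1^0$), $\varphi^\varepsilon=\min(A+\varepsilon,B)$ and $\varphi=\min(A,B)$. All five assertions then reduce to elementary manipulations of these formulas.

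Items (1), (4) and (5) are immediate. For (1), if $\varepsilon_1\le\varepsilon_2$ and $A+\varepsilon_2<B$ then $A+\varepsilon_1<B$, so $V_1^{\varepsilon_2}\subset V_1^{\varepsilon_1}\subset V_1$. For (4), on $V_1^\varepsilon$ one has $\varphi^\varepsilon=A+\varepsilon$, and $V_1^\varepsilon\subset V_1$ forces $\varphi=A$ there, giving $\varphi^\varepsilon=\varphi+\varepsilon$. For (5), on $V_2$ we have $A\ge B$, so $\varphi=B$ and $\varphi^\varepsilon=\min(A+\varepsilon,B)=B=\varphi$.

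For (2), I will evaluate the two infima at $\xi_0$: choosing $\xi=\xi_0\in W_1$ in the definition of $A$ and using $c(\xi_0,\xi_0)=0$, $\psi(\xi_0)=0$ gives $A(\xi_0)\le 0$; for $\xi\in W_2$ one has $d(\xi_0,\xi)\ge\beta$ and $\psi(\xi)\le 0$, so $c(\xi_0,\xi)-\psi(\xi)\ge\Lambda(\beta)$, whence $B(\xi_0)\ge\Lambda(\beta)$. Thus $A(\xi_0)+\varepsilon<\Lambda(\beta)\le B(\xi_0)$ as soon as $\varepsilon<\Lambda(\beta)$.

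The main point of the lemma is (3), and the delicate part is the continuity of $A$ and $B$. I plan to prove that $A$ (and $B$ by the same argument) is continuous as a map $\Sm\to[0,+\infty]$. The cost $c$ is continuous with values in $[0,+\infty]$, and $\psi$ is continuous because it is a $c$-conjugate function (see Proposition~\ref{prop-continuity-c-transform}); hence $\xi\mapsto c(\eta,\xi)-\psi(\xi)$ is continuous and attains its infimum on the compact set $\overline{W_1}$, which by density equals the infimum over $W_1$. Upper semi-continuity of $A$ is automatic since $A$ is an infimum of continuous functions; for lower semi-continuity, given $\eta_n\to\eta$, pick $\xi_n\in\overline{W_1}$ realising $A(\eta_n)$, extract a convergent subsequence $\xi_n\to\xi_*\in\overline{W_1}$, and pass to the limit in the equality $A(\eta_n)=c(\eta_n,\xi_n)-\psi(\xi_n)$ to get $A(\eta)\le\lim A(\eta_n)$. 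Continuity of $A,B$ into $[0,+\infty]$ then makes $V_1^\varepsilon=\{B>A+\varepsilon\}$ open. Finally, the inclusion $\bigcup_{\varepsilon<\Lambda(\beta)}V_1^\varepsilon\subset V_1$ comes from (1); conversely, any $\eta\in V_1$ has $A(\eta)<B(\eta)$ and belongs to $V_1^\varepsilon$ as soon as $\varepsilon<\min\bigl(B(\eta)-A(\eta),\Lambda(\beta)\bigr)$.
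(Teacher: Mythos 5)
Your proof is correct. Items (1), (2), (4), and (5) rest, both for you and in the paper, on the same two elementary identities $\inf_{\xi\in W_1}(c(\cdot,\xi)-\psi^\varepsilon(\xi))=A+\varepsilon$ and $\inf_{\xi\in W_2}(c(\cdot,\xi)-\psi^\varepsilon(\xi))=B$. The genuine divergence is in item (3). The paper only records that $A$ is upper semicontinuous (automatic, as an infimum of continuous functions) and then proves continuity of $B$ by a trick specific to $c$-concave functions: since $B$ is usc on a compact set it is bounded above by some constant $M$, and one can rewrite $B$ as the $c$-transform of the bounded function $\tilde{\psi}=\psi-(M+1)\ind_{W_1}$, so that Proposition~\ref{prop-continuity-c-transform} yields that $B$ is even Lipschitz. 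You instead prove both $A$ and $B$ are continuous as $[0,+\infty]$-valued functions by a direct compactness argument: upper semicontinuity is automatic, and for lower semicontinuity you extract a convergent subsequence of minimizers in the compact set $\overline{W_i}$ and pass to the limit, using that $c$ is continuous with values in $[0,+\infty]$ and that $\psi$ is continuous. Your route is more elementary and self-contained, and slightly stronger in that it gives continuity (not merely upper semicontinuity) of $A$; the trade-off is that it does not recover the Lipschitz bound the paper obtains for free, though that bound is not needed here. Either way the openness of $V_1^\varepsilon=\{A+\varepsilon<B\}$ follows (one only needs $A+\varepsilon$ usc and $B$ lsc, plus the usual care with $+\infty$, e.g.\ writing the set as $\bigcup_{q\in\mathbb{Q}}\{A+\varepsilon<q\}\cap\{B>q\}$), and the identification $V_1=\bigcup_{\varepsilon<\Lambda(\beta)}V_1^\varepsilon$ is handled the same way.
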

\begin{proof}
	Since $\psi^\varepsilon=\psi-\varepsilon$ on $W_1$ and $\psi^\varepsilon=\psi$ on $W_2$, we infer
	\begin{equation}\label{eqn-min_on_W_1}
			\inf_{\xi\in W_1}\bigl(c(\eta,\xi)-\psi^\varepsilon(\xi)\bigr) = \inf_{\xi\in W_1}\bigl(c(\eta,\xi)-\psi(\xi)\bigr)+\varepsilon
	\end{equation}
	and
	\begin{equation}\label{eqn-min_on_W_2}
			\inf_{\xi\in W_2}\bigl(c(\eta,\xi)-\psi^\varepsilon(\xi)\bigr) = \inf_{\xi\in W_2}\bigl(c(\eta,\xi)-\psi(\xi)\bigr),
	\end{equation}
	which easily implies (1).
	
	For $\varepsilon<\Lambda(\beta)$, the following holds 
	$$
	\inf_{\xi\in W_1}\bigl(c(\xi_0,\xi)-\psi^\varepsilon(\xi)\bigr) \le c(\xi_0,\xi_0)-\psi^\varepsilon(\xi_0) = \varepsilon < 	\inf_{\xi\in W_2}\bigl(c(\xi_0,\xi)-\psi^\varepsilon(\xi)\bigr)
	$$
	where the first inequality and the equality follow from $\xi_0\in W_1$, while the last
	inequality follows from the combination of  $c(\xi_0,\xi)\ge\Lambda(\beta)$ on $W_2$ and $\psi^\varepsilon \leq 0$. This implies $\xi_0\in V_1^\varepsilon$.

	 Let us prove (3). First note that $\psi^\varepsilon=\psi$ on $W_2$ yields
	$$
	V_1^\varepsilon=\Bigl\{\eta\in\Sm\ \Bigl|\ \inf_{\xi\in W_1}\bigl(c(\eta,\xi)-\psi^\varepsilon(\xi)\bigr)<\inf_{\xi\in W_2}\bigl(c(\eta,\xi)-\psi(\xi)\bigr)\Bigr\}.
	$$
	The map $\eta\mapsto\inf_{\xi\in W_2}\bigl(c(\eta,\xi)-\psi(\xi)\bigr)$ is (only) upper semicontinuous on $\Sm$, therefore bounded from above by a constant $A$. Set $\tilde{\psi}=\psi-(A+1)\ind_{W_1}$, use $\psi\le 0$ to infer, for any $\zeta\in W_1$,
	$$
	c(\eta,\zeta)-\tilde{\psi}(\zeta) \ge A+1 > \inf_{\xi\in W_2}\bigl(c(\eta,\xi)-\psi(\xi)\bigr) = \inf_{\xi\in W_2}\bigl(c(\eta,\xi)-\tilde{\psi}(\xi)\bigr).
	$$
	Consequently $\inf_{\xi\in W_2}\bigl(c(\eta,\xi)-\psi(\xi)\bigr)=\inf_{\xi\in\Sm}\bigl(c(\eta,\xi)-\tilde{\psi}(\xi)\bigr)$.  Finally, since $\tilde{\psi}$ is bounded, Proposition \ref{prop-continuity-c-transform} implies the map $\eta\mapsto\inf_{\xi\in W_2}\bigl(c(\eta,\xi)-\psi(\xi)\bigr)$ is continuous on $\Sm$. Since the map $\eta\mapsto\inf_{\xi\in W_1}\bigl(c(\eta,\xi)-\psi^\varepsilon(\xi)\bigr)$ is clearly upper semicontinuous, we have proved $V_1^\varepsilon$ is open.
	
	The equality $V_1=\bigcup_{\varepsilon<\Lambda(\beta)}V_1^\varepsilon$ follows easily from (1) and  $\psi^\varepsilon=\psi$ on $W_2$.

	For any $\eta\in V_1^\varepsilon\subset V_1$,  \eqref{eqn-min_on_W_1} yields
	$$
	\varphi(\eta)+\varepsilon = \inf_{\xi\in W_1}\bigl(c(\eta,\xi)-\psi(\xi)\bigr) + \varepsilon = \inf_{\xi\in W_1}\bigl(c(\eta,\xi)-\psi^\varepsilon(\xi)\bigr) = \varphi^\varepsilon(\eta).
	$$
	This proves (4). Similarly, for any $\eta\in V_2\subset V_2^\varepsilon$, \eqref{eqn-min_on_W_2} yields
	$$
	\varphi^\varepsilon(\eta) = \inf_{\xi\in W_2}\bigl(c(\eta,\xi)-\psi^\varepsilon(\xi)\bigr) = \inf_{\xi\in W_2}\bigl(c(\eta,\xi)-\psi(\xi)\bigr) = \varphi(\eta),
	$$
	which proves (5).
\end{proof}

Our goal is now to prove the existence of $\varepsilon>0$ such that $\cK(\varphi,\psi)<\cK(\varphi^\varepsilon,\psi^\varepsilon)$. To this end, we estimate the difference thanks to Lemma \ref{lem-properties_V_1}(5):
\begin{eqnarray}
	\cK(\varphi^\varepsilon,\psi^\varepsilon)-\cK(\varphi,\psi) & = & \int_{V_1}F(\varphi^\varepsilon)-F(\varphi)d\sigma + \int_{W_1}G(\psi^\varepsilon)-G(\psi)d\mu \nonumber\\
	 & \ge & \int_{V_1^\varepsilon}F(\varphi^\varepsilon)-F(\varphi)d\sigma + \int_{W_1}G(\psi^\varepsilon)-G(\psi)d\mu \nonumber
\end{eqnarray}
where we use $\varphi^\varepsilon\ge\varphi$, $F$ is non-decreasing, and $V_1^\varepsilon\subset V_1$. From Lemma \ref{lem-properties_V_1}(4) and the definition of $\psi^\varepsilon$, we get
\begin{eqnarray}
	\cK(\varphi^\varepsilon,\psi^\varepsilon)-\cK(\varphi,\psi) & \ge & \int_{V_1^\varepsilon}F(\varphi+\varepsilon)-F(\varphi)d\sigma + \int_{W_1}G(\psi-\varepsilon)-G(\psi)d\mu \nonumber\\
	 & \ge & \int_0^\varepsilon\int_{V_1^\varepsilon}f(\varphi+s)d\sigma ds - \int_0^\varepsilon\int_{W_1}g(\psi-s)d\mu ds. \nonumber
\end{eqnarray}
Since the $V_1^\varepsilon$ are open subsets containing $\xi_0$, Lemma \ref{lem-properties_V_1}(1) implies the existence of $\varepsilon_0>0$ and $\hpi>R>0$ such that, for any $\varepsilon<\varepsilon_0$, $B(\xi_0,R)\subset V_1^\varepsilon$. Combining this together with the fact that $f$ is non-negative yields
\begin{equation}\label{eqn-lower_bound_1}
	\cK(\varphi^\varepsilon,\psi^\varepsilon)-\cK(\varphi,\psi) \ge  \int_0^\varepsilon\int_{B(\xi_0,R)}f(\varphi+s)d\sigma ds - \int_0^\varepsilon\int_{W_1}g(\psi-s)d\mu ds.
\end{equation}
Lemma \ref{lem-properties_V_1} also guarantees $\xi_0\in V_1$, so we infer
$$
\varphi(\xi_0) = \inf_{\xi\in W_1}\bigl(c(\xi_0,\xi)-\psi(\xi)\bigr) \le c(\xi_0,\xi_0)-\psi(\xi_0)=0.
$$
Since $\varphi$ is nonnegative, we get $0=\varphi(\xi_0)=\min(\varphi)$. According to Proposition \ref{prop-min_c-concave}, $\varphi\le c(.,\xi_0)$ on $B(\xi_0,R)$, combining this together with $f$ nonincreasing yields
\begin{equation}\label{eqn-lower_bound_2}
	\forall \eta\in B(\xi_0,R)\ \ \ f(\varphi(\eta)+s) \ge f(c(\eta,\xi_0)+s).
\end{equation}
On the other hand, $g$ is nondecreasing and $\psi\le 0$ give us
\begin{eqnarray}\label{eqn-lower_bound_3}
	- \int_0^\varepsilon\int_{W_1}g(\psi-s)d\mu ds & \ge & - \int_0^\varepsilon\int_{W_1}g(-s)d\mu ds \nonumber\\
	 & \ge & \mu(W_1)G(-\varepsilon).
\end{eqnarray}
Inserting \eqref{eqn-lower_bound_2} and \eqref{eqn-lower_bound_3} into \eqref{eqn-lower_bound_1}, and then using Proposition \ref{prop-properties_F_G}, we get
\begin{eqnarray}
	\cK(\varphi^\varepsilon,\psi^\varepsilon)-\cK(\varphi,\psi) & \ge & \int_0^\varepsilon\int_{B(\xi_0,R)}f(c(\eta,\xi_0)+s)d\sigma(\eta) ds + \mu(W_1)G(-\varepsilon) \nonumber\\
	 & = & \Bigl(\frac{\sigma(\Sm)}{2}-\mu(W_1)\Bigr)\sqrt{2\varepsilon} + o(\sqrt{\varepsilon}). \nonumber
\end{eqnarray}
Since $\mu(W_1)<\frac{\sigma(\Sm)}{2}$, the inequality $\cK(\varphi^\varepsilon,\psi^\varepsilon)>\cK(\varphi,\psi)$ holds for $\varepsilon$ small enough and $(\varphi,\psi)$ is not a maximizing pair.

\begin{remark}
	It is not surprising that the vertex condition is the key point for proving that $\tilde{\psi}$ does not vanish. In fact, a point $\xi$ with mass $\frac{\sigma(\Sm)}{2}$ can occur for a convex domain with a point at infinity (such as an ideal triangle in the hyperbolic plane, for example). At such a point, the radial function is infinite, and therefore $\tilde{\psi}=\ln(\tanh r)$ vanishes. 
\end{remark}

\appendix

%
%
\section{The Cauchy-Crofton formula in $\dS$}\label{CCrofton}
%
%
The Cauchy-Crofton formulas are classical tools in integral geometry in which the volume, or curvature integrals, relative to a hypersurface is expressed in terms of an integral on the space of totally geodesic submanifolds of a given dimension with respect to its kinematic measure. Note that all the totally geodesics submanifolds considered in this appendix are assumed to be complete. The Cauchy-Crofton formulas are well-known in the context of Riemannian space forms  \cite{Santalo} and were extended to Lorentzian geometries in \cite{Solanes,Solanes-Teufel}. For our purpose, we only need to express the volume of an hypersurface in $\dS$ using an integral on the space of space-like geodesics.

%
\subsection{Kinematic measures} 
%
The space $\cL^s$ of space-like geodesics in $\dS$ has a natural smooth manifold structure and admits a measure $\dl$ (unique up to a multiplicative constant) which is invariant under the isometric actions on $\dS$. We refer to \cite[\S 2.3]{Solanes} or \cite[\S 2]{Solanes-Teufel} for the construction of $\cL^s$ and the measure $\dl$. For the convenience of the reader we also point out another approach to show the existence of $\dl$. The latter allows us to obtain results for space-like convex bodies without any regularity assumption, see Lemma \ref{l-prop-kine-bound}. The construction is based on the duality induced by the pseudo-Riemannian metric on $\R^{m+2}$. Indeed, recall there is a one-to-one correspondence between the (space-like) totally geodesic submanifolds $L$ of dimension $k$ (both in $\hyp$ and $\dS$) and the induced vector subspaces $\mathrm{Span}(L) \subset \R^{m+2}$, see Section \ref{sec-geometry_convex_bodies}. If $L\subset\dS$ is a space-like $k$-plane, then $\mathrm{Span}(L)^\bot$ contains time-like directions and $\mathrm{Span}(L)^\bot\cap\hyp$ is a $(m-k)$-plane of $\hyp$. As a consequence, the space of space-like geodesics of $\dS$ can be smoothly identified with the space of $(m-1)$-dimensional totally geodesic  submanifolds of $\hyp$; the existence of $\dl$ then follows from that of a suitable measure $d\tilde{\ell}_{m-1}$ on the aforementioned set of submanifolds of $\hyp$, namely $\dl = d\tilde{\ell}_{m-1}$. Indeed, for space forms, the existence of kinematic measure $d\tilde{\ell}_k$ on $k$-dimensional totally geodesic submanifolds $\tilde{\mathcal{L}}_k$ is by now classical \cite{Santalo}; let us recall the construction in the hyperbolic case. First, consider the space $\tilde{\cO}_{m+1-k}$ of $(m+1-k)$-dimensional totally geodesic submanifolds passing through the origin $o$. Identifying each element with its tangent space at $o$, we can see $\tilde{\cO}_{m+1-k}$ as the Grassmannian of $(m+1-k)$-plane in $\R^{m+1}$, and this space has a natural invariant measure $dg_{m+1-k}$. Similarly to what is done in the Euclidean case, a set $L \in \tilde{\mathcal{L}}_k$ is parameterized by means of $(M,p)$, where $M \in {\tilde{\mathcal{O}}}_{m+1-k}$ is the unique element of ${\tilde{\mathcal{O}}}_{m+1-k}$ orthogonal to $L$, and $p$ is the point defined by $\{p\}=L \cap M$. The integral of a continuous function $u:\tilde{\cL}_k\to\R$ with respect to the measure $d\tilde{\ell}_k$ can be expressed as follows
\begin{equation}\label{e-kine_dual}
\int_{\tilde{\cL}_k}u(L)d\tilde{\ell}_k(L) = \int_{\tilde{\cO}_{m+1-k}}\int_M u(M,p)d\nu_M(p)dg_{m+1-k}(M),
\end{equation} 
where $\nu_M$ is the canonical Riemannian measure on the $(m+1-k)$-dimensional hyperbolic space $M$. We refer to \cite[Section 17.3]{Santalo} for more details.  

Let us now illustrate the combination of the above parameterization with the duality of totally geodesic submanifolds with the case of a geodesic $ \gamma \in {\cL}^s$. We can parameterize such a $\gamma$
by
\begin{equation}\label{eq-DeSGeodPara}
	\gamma(s) = \cos (s) \,c'_{\xi_a}(h_a) +\sin (s)\, \xi_b,
\end{equation}
where $\xi_a, \xi_b \in \Sm$ are orthogonal and $h_a\in\R_+$. In that case, the parameters of $\gamma$ are $(M,p)\in\tilde{\cO}_2\times\hyp$, where $T_oM=\text{Span}(\xi_a,\xi_b) \subset T_o \hyp$ and $p= c_{\xi_a}(h_a) $. Indeed, to the geodesic $\gamma$ corresponds the $(m-1)$-dimensional submanifold $\gamma^{\perp}$ in $\hyp$. The point $c_{\xi_a}(h_a)$ clearly belongs to both $\gamma^{\perp}$ and $\text{Span} (\xi_a,\xi_b)$. Therefore, $p= c_{\xi_a}(h_a) $ because the intersection of the previous sets is reduced to a single point.

In the following lemma, we consider the $\varepsilon$-tubular neighborhood of a space-like smooth hypersurface $S$ of $\dS$, this set is defined as the subset of points in $\dS$ which can be connected to $S$ by a geodesic orthogonal to $S$ of length at most $\varepsilon$. Using the identification introduced earlier in this paragraph, we can now prove (Item (1) is originally proved in \cite{Solanes-Teufel}):

\begin{lemma}\label{l-prop-kine-bound}
	The measure $\dl$ on $\cL^s$ has the following properties:
	\begin{enumerate}
		\item For any $\varepsilon>0$ the set of space-like geodesics contained in the $\varepsilon$-tubular neighborhood of a given space-like totally geodesic hypersurface has positive finite $\dl$ measure.
		\item Let $\Omega^* \subset \dS_+$ be a space-like convex body. The set of space-like geodesics contained in a support hyperplane of $\Omega^*$ is $\dl$-negligible.
	\end{enumerate}
\end{lemma}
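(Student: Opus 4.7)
The plan is to exploit the identification, described just before the lemma statement, between space-like geodesics of $\dS$ and the space $\tilde{\cL}_{m-1}$ of $(m-1)$-dimensional totally geodesic submanifolds of $\hyp$, together with the disintegration \eqref{e-kine_dual} of the kinematic measure $\dl=d\tilde{\ell}_{m-1}$ over $\tilde{\cO}_2\times\hyp$. Both items will reduce to Fubini-type manipulations against this disintegration.

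For (1), I would first use the isometry invariance of $\dl$ together with the transitivity of the isometry group of $\dS$ on space-like totally geodesic hypersurfaces to reduce to the case $\hat{H}=\Sm\subset\dS$ (the equator). Via the diffeomorphism \eqref{eqn-diffeo_de_Sitter}, the $\varepsilon$-tubular neighborhood is
\[
T_\varepsilon(\Sm)=\bigl\{c'_\eta(t)\ \bigl|\ \eta\in\Sm,\ |t|\le\varepsilon\bigr\}.
\]
Parameterizing a space-like geodesic as in \eqref{eq-DeSGeodPara} by $\gamma(s)=\cos(s)\,c'_{\xi_a}(h_a)+\sin(s)\,\xi_b$ and expanding $c'_{\xi_a}(h_a)=\sinh(h_a)o+\cosh(h_a)\xi_a$, the time coordinate $t(s)$ of $\gamma(s)$ satisfies $\sinh(t(s))=\sinh(h_a)\cos(s)$; hence $\max_s|t(s)|=|h_a|$ and $\gamma\subset T_\varepsilon(\Sm)\Leftrightarrow |h_a|\le\varepsilon$. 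Geometrically this asks that the foot $p=c_{\xi_a}(h_a)$ of the perpendicular from $o$ to the dual $(m-1)$-plane $L_\gamma$ of $\hyp$ lie in the hyperbolic ball $B_\hyp(o,\varepsilon)$. Inserting this description into \eqref{e-kine_dual} then gives
\[
\dl\bigl(\{\gamma\subset T_\varepsilon(\Sm)\}\bigr)=\int_{\tilde{\cO}_2}\nu_M\bigl(M\cap B_\hyp(o,\varepsilon)\bigr)\,dg_2(M),
\]
which is both positive and finite since each slice $M\cap B_\hyp(o,\varepsilon)$ is a hyperbolic $2$-disk of area $2\pi(\cosh\varepsilon-1)$ independent of $M$, while $\tilde{\cO}_2\simeq G(2,m+1)$ is compact.

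For (2), by Remark \ref{rem-conv}(2) any support hyperplane of $\Omega^*$ takes the form $\hat{H}_{x_0}=x_0^\perp\cap\dS$ for some $x_0\in\hyp$. The inclusion $\gamma\subset\hat{H}_{x_0}$ reads, at the level of ambient linear spans, as $x_0\in\mathrm{Span}(\gamma)^\perp\cap\hyp=L_\gamma$. In the parameterization $L_\gamma=L(M,p)$ of \eqref{e-kine_dual}, the constraint $x_0\in L(M,p)$ amounts to $p$ being the foot of the perpendicular from $x_0$ to the hyperbolic $2$-plane $M$; since this foot is unique in $\hyp$, for every $M\in\tilde{\cO}_2$ exactly one $p\in M$ is admissible, so the inner $\nu_M$-integral in \eqref{e-kine_dual} vanishes and the full set has $\dl$-measure zero.

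The only real technical step is the time-coordinate identity $\sinh(t(s))=\sinh(h_a)\cos(s)$ underlying (1); it is what lets me transfer the metric condition ``$\gamma\subset T_\varepsilon(\Sm)$'' into the clean inequality $|h_a|\le\varepsilon$ on the dual hyperbolic side. Once this identity is secured, both items are routine consequences of the disintegration \eqref{e-kine_dual}, so I do not expect a genuine obstacle beyond keeping the duality dictionary between $(M,p)$-coordinates on $\tilde{\cL}_{m-1}$ and the explicit geodesic parameterization \eqref{eq-DeSGeodPara} straight.
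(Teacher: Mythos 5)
Your argument for item (1) is correct and essentially the one in the paper: reduce by isometry invariance to the equator, parameterize space-like geodesics as in \eqref{eq-DeSGeodPara}, and recognize the inclusion in the $\varepsilon$-tube as the condition $h_a\le\varepsilon$, i.e.\ $p\in B_\hyp(o,\varepsilon)\cap M$; the explicit identity $\sinh(t(s))=\sinh(h_a)\cos(s)$ is a clean way to verify this (and is left implicit in the paper). For item (2), however, your proof has a genuine gap. You fix a single support hyperplane $\hat{H}_{x_0}$ and show that, for each $M$, the unique admissible $p$ is the foot of the perpendicular from $x_0$ to $M$; that indeed gives $\dl$-measure zero, but only for the set of geodesics lying in \emph{that one} hyperplane. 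The statement concerns the set of geodesics contained in \emph{some} support hyperplane, i.e.\ the union over the full continuum of support hyperplanes, and a union of continuum-many null sets need not be null. In dual coordinates the admissible set of $p$'s, as $x_0$ ranges over $\partial\Omega$, is $\mathrm{P}_M(\partial\Omega)$, which is not a single point, and for a generic Lipschitz map from an $m$-manifold into the $2$-plane $M$ the image can well have positive $\nu_M$-measure — so no soft ``projection of a boundary'' argument suffices either.

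The paper's proof closes this gap by exploiting convexity through the duality dictionary: if $\gamma$ lies in a support hyperplane $\hat H$ of $\Omega^*$, then the dual $(m-1)$-plane $\gamma^\perp\cap\hyp$ is contained in the corresponding support hyperplane $H$ of $\Omega$, and the point $y\in\partial\Omega$ dual to $\hat H$ lies on $\gamma^\perp$. Projecting to $M$, one gets $p=\mathrm{P}_M(y)\in\mathrm{P}_M(\Omega)$, and moreover $M\cap H$ is a support line of the convex set $\mathrm{P}_M(\Omega)\subset M$ passing through $p$, which forces $p\in\partial(\mathrm{P}_M(\Omega))$. The boundary of a planar convex body is a Lipschitz curve, hence $\nu_M$-null, and the disintegration \eqref{e-kine_dual} then gives $\dl$-measure zero for the whole union. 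To repair your argument you would need to replace ``one $p$ per $M$'' by ``$p\in\partial(\mathrm{P}_M(\Omega))$'', which requires invoking the support-line structure coming from the duality and is precisely the missing convexity input.
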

\begin{proof}
	The proofs are based on \eqref{e-kine_dual}. Let us call $V_1$ and $V_2$ the two sets of geodesics. Since $\dl$ is invariant by isometry, it suffices to prove the first result when the totally geodesic hypersurface is the equator $\Sm$. In that case, we can easily compute the measure of $V_1$. Indeed, using the same notation as above,  if $\gamma\in\cL^s$ is parameterized by $\gamma(s)=\cos(s)\,c'_{\xi_a}(h_a)+\sin(s)\,\xi_b$, then $\gamma \in V_1$ if and only if $h_a \leq \varepsilon$. Thus, the geodesics in $V_1$ are precisely those whose parameters $(M,p)\in\tilde{\cO}_2\times\hyp$ are such that $p\in B(o,\varepsilon)\subset M$. Therefore, the measure of $V_1$ satisfies
	$$
	\dl(V_1) = \int_{\tilde{\cO}_2}\nu_M(B(o,\varepsilon))dg_{m+1-k}(M) = 2\pi|\tilde{\cO}_2|\,(\cosh(\varepsilon)-1) > 0,
	$$
	where $|\tilde{\cO}_2|$ is the Riemannian volume of the Grassmannian. 

	The proof of the second item is similar. Let $\hat{H}$ be a support hyperplane to $\Omega^*$ at $c'_{\eta}(h)$. Consider $\gamma$ a geodesic contained in $\hat{H}$ and going through $c'_{\eta}(h)$. Using the duality of convex bodies, support hyperplanes, and points, we get 
	\begin{equation}\label{eChaineDualite}
		H \supset \gamma^\bot\cap \hyp \supset \{y\},
	\end{equation} 
	where $H$ is the support hyperplane to $\Omega$ orthogonal to $c'_{\eta}(h)$, and $y \in \partial \Omega$ is orthogonal to $\hat{H}$, see Section \ref{sec-duality}. The parameters $(M,p)\in\tilde{\cO}_2\times\hyp$ of $\gamma$ are such that $M\in\tilde{\cO}_2$ is the totally geodesic 2-plane orthogonal to  $\gamma^\bot\cap \hyp$ through $o$, and $\{p\} =M \cap \gamma^\bot$. Let $\mathrm{P}_M :\hyp \longrightarrow M$ denote the orthogonal projection onto $M$. Using  \eqref{eChaineDualite} and $\{p\} =M \cap \gamma^{\perp}$, we immediately get $p=\mathrm{P}_M(y)\in\mathrm{P}_M(\partial \Omega)$. The latter property can be improved by noticing that $\mathrm{P}_M (\Omega)$ is a convex subset of $M$. By \eqref{eChaineDualite} again, $\mathrm{P}_M( \Omega)$ is contained in a half-plane determined by  $H \cap M$. Because $ \{p\} =M \cap \gamma^{\perp} \subset M \cap H$, the set $M \cap H$ is a support line to $\mathrm{P}_M(\Omega)$ at $p$, namely  $p \in\partial(\mathrm{P}_M(\Omega))$. The same argument shows $\partial(\mathrm{P}_M(\Omega))= \mathrm{P}_M(\partial \Omega) $. Therefore, $\gamma \in V_2$ if and only if its coordinates $(M,p)$ are such that $p\in\partial(\mathrm{P}_M(\Omega))$.
	  
	   Consequently, the measure of $V_2$ satisfies
	$$
	\dl(V_2)= \int_{\tilde{\cO}_2}\nu_M(\partial(\mathrm{P}_M(\Omega)))dg_{m+1-k}(M) = \int_{\tilde{\cO}_2}0\,dg_{m+1-k}(M)=0.
	$$
	
\end{proof}

	\begin{remark}\label{rem-lader}
	We can extend  the result  in Lemma \ref{l-prop-kine-bound}(1) to a smaller set of space-like geodesics. Without loss of generality, we can assume the totally geodesic hypersurface is $\Sm$. Let  $U \subset \Sm$ be a nonempty open subset. We claim that the set $V$ of space-like geodesics contained in the $\varepsilon$-tubular neighborhood of $\Sm$ and intersecting $U$ has positive $\dl$ measure. Indeed, a computation similar to the one above gives $\dl(V)=2\pi|\tilde{\mathcal{U}}|\,(\cosh(\varepsilon)-1) > 0$, where $\tilde{\mathcal{U}}\subset\tilde{\cO}_2$ is the nonempty open set of 2-planes which intersect $U$. The compactness of $\tilde{\cO}_2$ certainly implies $|\tilde{\mathcal{U}}|>0$.
	\end{remark}

%
\subsection{Approximation by smooth convex bodies}\label{Appro}
%
Some of the results on general convex bodies will be inferred from known properties of smooth convex bodies by using the following approximation result.
\begin{proposition} \label{prop-approximation_convex}
	Let $\Omega\subset\hyp$ be a convex body with the point $o$ in its interior and $\Omega^*$ be its polar body. There exists a sequence $(\Omega_k)_{k\in\N}$ of hyperbolic convex bodies with $o$ in their interiors such that the following holds:
	\begin{enumerate}
		\item for all $k\in\N$, the boundary $\dom_k^*$ is smooth, and $\Omega_k$ and $\Omega_k^*$ are strictly convex.
		\item the sequences of radial and support functions $(r_k)_{k\in\N}$ and $(h_k)_{k\in\N}$ converge uniformly to the radial and support functions $r$ and $h$ of $\Omega$.
		\item the sequence $(\Omega_k^*)_{k\in\N}$ (resp. $(\Omega_k)_{k\in\N}$) converges to $\Omega^*$ (resp.  $\Omega$) w.r.t. Hausdorff topology.
		\item $\lim_{k \rightarrow +\infty} T_k(\eta)=T(\eta)$ for $\sigma$-a.e. $\eta\in\Sm$ and $\lim_{k \rightarrow +\infty} S_k(\xi)=S(\xi)$ for $\sigma$-a.e. $\xi\in\Sm$.
		\item the volumes of the boundaries converge: $\lim\limits_{k \rightarrow +\infty}|\dom^*_k|=|\dom^*|$.
		\item the curvature measures weakly converge: $\lim\limits_{k \rightarrow +\infty}\mu_k=\mu$.
	\end{enumerate}
\end{proposition}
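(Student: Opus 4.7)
The plan is to transfer the problem to the Euclidean setting via the correspondences from Section~\ref{Sec-convBo} and Section~\ref{sec-duality}, apply standard approximation results for Euclidean convex bodies, and then push the various convergences back to the hyperbolic and de Sitter settings.

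First, let $\Omega_E\subset\R^{m+1}$ denote the Euclidean convex body corresponding to $\Omega$. Since $o$ lies in the interior of $\Omega$, there exists $\delta>0$ such that $B(0,\delta)\subset\Omega_E\subset B(0,1-\delta)$. Using the classical smoothing theorem of Euclidean convex geometry (e.g.~\cite[Theorem~3.4.1]{Schneider}), select a sequence $\Omega_{E,k}$ of $C^\infty$ strictly convex bodies converging to $\Omega_E$ in the Hausdorff distance, with $B(0,\delta/2)\subset\Omega_{E,k}\subset B(0,1-\delta/2)$ for $k$ large. Let $\Omega_k\subset\hyp$ be the hyperbolic convex body associated to $\Omega_{E,k}$. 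Because the Euclidean polar transform exchanges smoothness and strict convexity, $\Omega_{E,k}^*$ is itself $C^\infty$ and strictly convex, and these properties transfer to $\Omega_k^*\subset\dS_+$ through the correspondence of Remark~\ref{rem-conv}. This settles (1).

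Next, Hausdorff convergence $\Omega_{E,k}\to\Omega_E$ yields uniform convergence of the Euclidean radial and support functions $r_{E,k}\to r_E$ and $h_{E,k}\to h_E$ on $\Sm$. Since all these functions take values in the compact interval $[\delta/2,1-\delta/2]$ and $\argth$ is Lipschitz there, the identities $h=\argth\circ h_E$ and $r=\argth\circ r_E$ from~\eqref{eqn_r_and_h} give the uniform convergence $h_k\to h$ and $r_k\to r$, proving (2). The continuity of the Euclidean polar transform on bodies containing a fixed neighborhood of the origin gives $\Omega_{E,k}^*\to\Omega_E^*$ in the Hausdorff distance, and hence $\Omega_k^*\to\Omega^*$ and $\Omega_k\to\Omega$, which is (3).

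For (4), view $h_{E,k}$ and $h_E$ as the restrictions to $\Sm$ of $1$-homogeneous convex functions on $\R^{m+1}\setminus\{0\}$. By Rademacher's theorem, $h_E$ is differentiable at $\sigma$-a.e.~$\eta\in\Sm$, and at any such point the classical result on convergence of gradients of convex functions yields $\nabla h_{E,k}(\eta)\to\nabla h_E(\eta)$. As recalled in Remark~\ref{rem_id_T}, $T(\eta)$ is the radial direction of the unique support point of $\Omega_E$ with outward normal $\eta$, which is precisely $\nabla h_E(\eta)/|\nabla h_E(\eta)|$ at differentiability points. Hence $T_k(\eta)\to T(\eta)$ for $\sigma$-a.e.~$\eta$, and the symmetric argument applied to $r_{E,k}$ gives $S_k\to S$ $\sigma$-a.e.

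Finally, combining Lemma~\ref{def_CurvMeas_Impli} with~\eqref{e_def_AreaMea} gives the explicit integral representations
\begin{equation*}
|\partial\Omega^*|=\int_{\Sm}\frac{\cosh^{m+1}(h(\eta))}{\cosh(r(T(\eta)))}\,d\sigma(\eta),\qquad\int_{\Sm}\phi\,d\mu=\int_{\Sm}\phi(T(\eta))\frac{\cosh^{m+1}(h(\eta))}{\cosh(r(T(\eta)))}\,d\sigma(\eta)
\end{equation*}
for $\phi\in C(\Sm)$, and analogous expressions for $\Omega_k$. The uniform bounds on $h_k,r_k$ from (2) yield a uniform $L^\infty$-bound on the integrands, while (2) and (4) ensure pointwise $\sigma$-a.e.~convergence. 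The dominated convergence theorem then delivers (5) and (6). The main technical obstacle is ensuring that the pointwise a.e.~convergence $T_k\to T$ of the (multivalued) Gauss-type maps of merely Lipschitz convex bodies can be extracted from the Hausdorff convergence; this is precisely where the uniform enclosure $B(0,\delta/2)\subset\Omega_{E,k}\subset B(0,1-\delta/2)$ and the strict convexity and smoothness of the approximants come into play to guarantee domination and single-valuedness at a.e.~point.
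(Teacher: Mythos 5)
Your proof follows essentially the same approach as the paper: transfer everything to the Euclidean model via the cone correspondence, produce a sequence of smooth strictly convex approximants, and pull the convergence of support and radial functions, of their gradients (for $T_k,S_k$), and of the associated density measures back to the hyperbolic and de Sitter setting via the relations $h=\argth\circ h_E$, $r=\argth\circ r_E$; the paper simply makes the smoothing explicit by mollifying the gauge function $U_{\Omega_E^*}=h_{\Omega_E}$ and adding $\varepsilon|x|^2/2$, while you invoke a general approximation theorem. One small point to tighten: for the polar $\Omega_{E,k}^*$ to inherit $C^\infty$ regularity it is not enough that $\Omega_{E,k}$ be $C^\infty$ and strictly convex — you need the approximants to have everywhere positive Gauss curvature (class $C^\infty_+$), which the standard smoothing results (and the paper's explicit mollification) do provide, but which your phrase ``the polar transform exchanges smoothness and strict convexity'' does not by itself guarantee beyond the $C^1$ category.
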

\begin{proof}
	Recall that by using convex cones in $\R^{m+2}$ one gets a one-to-one correspondence between the hyperbolic convex bodies $\Omega$ (resp. space-like convex bodies $\Omega^* \subset \dS$) and the $(m+1)$-dimensional Euclidean convex bodies $\Omega_E$ contained in the open unit ball (resp. $\Omega^*_E$ containing the closed unit ball in their interior), see Section \ref{sec-geometry_convex_bodies}. Let us also recall that the radial and support functions are related through the formulas
	$$
	h_E=\tanh(h) \qquad\mbox{and}\qquad r_E= \tanh (r).
	$$
	Moreover, thanks to the above formula together with the following equivalences
	\begin{equation}\label{appen_B_3}
		\xi \in T(\eta) \Leftrightarrow \tanh (h(\eta)) = \tanh (r(\xi)) \langle \xi, \eta \rangle  \Leftrightarrow \eta \in S(\xi), 
	\end{equation}
	we infer that the mapping $T$ (resp. $S$) coincides with its Euclidean counterpart (see Proposition \ref{prop-S_T_equality_case}) and is single-valued whenever $h$ (resp. $r$) is differentiable, that is $\sigma$-almost everywhere. Therefore, the function $\argth$ being Lipschitz on all compact subsets of $(-1,1)$,  all the items of Proposition \ref{prop-approximation_convex} but the last two follow from results in Euclidean geometry that we now briefly recall using the standard reference \cite[Sections 1.6, 1.7, and 1.8]{Schneider}.
	
	Recall that a Euclidean convex body, with o in its interior, is strictly convex if and only if its support function is $C^1$. Moreover, the radial and support functions of a convex body and its polar body are related by the formulas $r_{E}^*=1/ h_E$ and $h_{E}^*= 1/r_E$. Consequently, the gauge function defined by $U_{\Omega_E^*}(x)=\inf\{t\ge 0\ |\ x\in t\Omega_E^*\}$, or equivalently as the inverse of the radial function of $\Omega_E^*$, is a convex function. For $\varepsilon>0$, consider the function $U_\varepsilon^*=U_{\Omega_E^*}*\rho_\varepsilon+\varepsilon\frac{|.|^2}{2}$, where $(\rho_\varepsilon)_{\varepsilon>0}$ are standard mollifiers, and $|.|$ denotes the Euclidean norm. The function $U_\varepsilon^*$ is smooth and strictly convex. So is the related convex body $\Omega_{\varepsilon, E}^*:=\{ U^*_\epsilon \le 1 \}$. Thus, its polar body $\Omega_{\varepsilon, E}$ has smooth support function hence is strictly convex as well. The radial function $r_{\varepsilon,E}^*$ of $\Omega_{\varepsilon,E}^*$ converges uniformly to $r_E^*$ as $\varepsilon$ goes to $0$. This implies the convergence of $\Omega_{\varepsilon,E}^*$ to $\Omega_E^*$ w.r.t. Hausdorff distance as $\varepsilon$ goes to $0$ \cite[Theorem 1.8.11]{Schneider}. As a result, we obtain the uniform convergence of $r_{\varepsilon,E}=1/ h_{\varepsilon,E}^*$ to $r_E$ \cite[Theorem 1.8.11]{Schneider} and the Hausdorff convergence of $\Omega_{\varepsilon,E}$ to $\Omega_E$. Finally, the fact that $h_{\varepsilon,E}$ and $r_{\varepsilon,E}$ are $C^1$ combined with \eqref{appen_B_3} yields the result stated in (4). The proof of the last two items follows from the previous properties together with the definition  of the area measure of a space-like convex body \eqref{e_def_AreaMea}. It can be proved that the measures on $\Sm$ as in \eqref{e_def_AreaMea} relative to $(\Omega_k^*)_{k \in \N}$ converge in total variation distance to the corresponding measure relative to $\Omega^*$. Total variation convergence implies the convergence of the total mass of the measures; this gives (5). Item (6) follows from the total variation convergence  and the pointwise convergence of $Q_k$ to $Q$, where $Q_k$ and $Q$ are as in \eqref{eqn-projection_equator}. More details are provided in the proof of Theorem \ref{thm-Crofton-conv}, where a slightly more general result is proved, see \eqref{eqn-sTV}.
\end{proof}

%
\subsection{Cauchy-Crofton formula for smooth and non-smooth convex bodies in $\dS_+$} \label{sec-CCrofton}
%
In this part, we report the Cauchy-Crofton formula for $C^2$ space-like hypersurfaces in the de Sitter space proved by G. Solanes and E. Teufel \cite[Theorem~1]{Solanes-Teufel}. Precisely, we only use the formula involving lines (corresponding to $r=1$ in their statement). We also restrict our attention to hypersurfaces that are graphs over an open subset $\omega$ of the equator $\Sm$. This assumption guarantees that the homotopy condition required in the Solanes-Teufel result is satisfied. In the case where $\omega\neq \Sm$, we add some assumptions to guarantee that the boundary is fixed as required in \cite[Theorem 1]{Solanes-Teufel}.

We then use the approximation result proved in the previous paragraph to generalize the Cauchy-Crofton formula to boundaries of arbitrary space-like convex bodies in $\dS_+$.
\begin{theorem}[Solanes-Teufel \cite{Solanes-Teufel}]\label{thm-ST} 
	Let $\Sigma_1,\Sigma_2\subset\dS$ be two space-like  hypersurfaces of class $C^2$ possibly with boundary. Let us assume that, for $i\in \{1,2\}$,  
	$$
	\Sigma_i= \{( h_i(\eta),\eta)|\, \eta \in \omega\},
	$$ 
	where $\omega=\Sm$ or $\omega\subset\Sm$ is a domain with $C^2$ boundary. If $\omega \subsetneq \Sm$, further assume that $h_1=h_2$ on $\partial \omega$. Then, the following equality holds
	$$
	|\Sigma_2|-|\Sigma_1|  =  \frac{m}{|\Sbb^{m-1}|}\int_{\cL^s}(\#(\gamma\cap\Sigma_1)-\#(\gamma\cap\Sigma_2))\dl(\gamma).
	$$
\end{theorem}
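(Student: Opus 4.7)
Since the theorem is attributed to Solanes and Teufel \cite{Solanes-Teufel}, my preferred approach is to invoke their general Cauchy-Crofton identity and verify that our graph hypotheses meet theirs. Their result requires that $\Sigma_1$ and $\Sigma_2$ be connected through a family of space-like hypersurfaces fixing the boundary when non-empty. The obvious candidate is the affine family $\Sigma_s = \{(h_s(\eta), \eta) : \eta \in \omega\}$ with $h_s = (1-s)h_1 + sh_2$, which fixes $\partial \omega$ thanks to the assumption $h_1 = h_2$ there. Space-likeness of $\Sigma_s$, via the product expression $g_{\dS} = -dt^2 + \cosh^2(t)\,\can_{\Sm}$, reduces to the pointwise inequality $|\nabla h_s|^2 < \cosh^2 h_s$; should this fail for some intermediate $s$, one falls back on a more flexible homotopy or a piecewise argument.

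For a direct proof, I would use the double fibration $\mathcal{I}_i = \{(\gamma, p) \in \cL^s \times \Sigma_i : p \in \gamma\}$ with projections $\pi_L$ onto $\cL^s$ and $\pi_\Sigma$ onto $\Sigma_i$. The fiber $\pi_L^{-1}(\gamma)$ has cardinality $\#(\gamma \cap \Sigma_i)$, while $\pi_\Sigma^{-1}(p)$ parametrizes space-like geodesics through $p$, which form a non-compact family of infinite invariant measure; consequently the integrals $\int \#(\gamma \cap \Sigma_i)\,d\ell$ are each divergent, and only their difference makes sense under a cobordism hypothesis. Concretely, one rewrites
\begin{equation*}
\int_{\cL^s} \bigl(\#(\gamma \cap \Sigma_1) - \#(\gamma \cap \Sigma_2)\bigr)\, d\ell(\gamma)
\end{equation*}
as a signed count over the region $R$ cobounded by $\Sigma_1$ and $\Sigma_2$ (namely $\{(t,\eta) : \eta \in \omega,\ h_1(\eta)\wedge h_2(\eta) \le t \le h_1(\eta)\vee h_2(\eta)\}$), by applying Fubini along space-like geodesics: each such $\gamma$ contributes a signed count of how it alternates between entering and exiting $R$ through $\Sigma_1$ versus $\Sigma_2$. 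The angular Jacobian relating $\pi_L^* d\ell$ to $d\sigma_{\Sigma_i}$ times a fiber measure is a universal density whose fiber integral, computed in the Minkowski model using the explicit expression for $d\ell$ recalled in the construction preceding Lemma \ref{l-prop-kine-bound}, yields precisely the constant $|\Sbb^{m-1}|/m$.

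Two obstacles stand out. First, establishing that the affine homotopy remains space-like throughout may fail in general; the remedy is either a smarter homotopy exploiting the Euclidean convex structure $h_E = \tanh h$ of Section \ref{sec-duality}, or a decomposition of $\omega$ into pieces on which local control is possible, with boundary contributions telescoping thanks to the matching condition. Second, the sign in the statement (with $|\Sigma_2|-|\Sigma_1|$ set against $\#(\gamma\cap\Sigma_1)-\#(\gamma\cap\Sigma_2)$) reflects the characteristic orientation reversal of Lorentzian kinematic formulas and must be tracked carefully through the orientations on $\cL^s$, on each $\Sigma_i$, and on $\mathcal{I}_i$; this is the step where the Lorentzian case genuinely departs from its Riemannian analogue and where I expect the bulk of the technical care to lie.
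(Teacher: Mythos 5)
The paper gives no proof of Theorem \ref{thm-ST}: it is stated as a specialization of \cite[Theorem 1]{Solanes-Teufel} to graphs over $\omega\subset\Sm$, with only a passing remark that the graph hypothesis guarantees the homotopy condition and that the boundary-matching hypothesis ensures the boundary is fixed. Your first paragraph follows exactly this route, and usefully makes explicit the verification that the paper leaves implicit. Your worry about the naive affine path $h_s=(1-s)h_1+sh_2$ is legitimate: the space-likeness condition $|\nabla h_s|^2<\cosh^2 h_s$ does not obviously propagate, because convexity of $\cosh$ goes the wrong way for the required inequality. The remedy you gesture at is in fact correct and clean, and deserves to be stated: in the variable $h_E=\tanh h$ one has $\nabla h = \cosh^2(h)\nabla h_E$ and $\cosh^2 h = (1-h_E^2)^{-1}$, so the space-likeness condition becomes
\begin{equation*}
|\nabla h_E|^2 + h_E^2 < 1,
\end{equation*}
i.e., pointwise membership of $(h_E(\eta),\nabla h_E(\eta))$ in the open Euclidean unit ball of $\R^{m+1}$. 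This is a convex constraint, hence the affine path $h_{E,s}=(1-s)h_{E,1}+s\,h_{E,2}$ stays space-like for every $s\in[0,1]$ and fixes $\partial\omega$ pointwise when $h_1=h_2$ there. This settles the homotopy requirement and makes your first paragraph into a complete verification that the Solanes--Teufel hypotheses hold. This is what the paper intends but does not write out.

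Your second and third paragraphs sketch a self-contained double-fibration argument, which is a genuinely different route from the paper's citation. The outline is the standard one and the observation that the individual integrals $\int\#(\gamma\cap\Sigma_i)\,d\ell$ diverge, so that only the difference is meaningful under a cobordism hypothesis, is correct. But the sketch as written has gaps that a full proof must fill. The assertion that the fiber integral of the angular Jacobian produces exactly $|\Sbb^{m-1}|/m$ is stated without the Minkowski-model computation that would establish it. The sign/orientation issue is acknowledged but not resolved, and it is not cosmetic: the region $R=\{(t,\eta):\, h_1\wedge h_2 \le t\le h_1\vee h_2\}$ you propose is unsigned, whereas the left side $|\Sigma_2|-|\Sigma_1|$ changes sign when $h_1$ and $h_2$ cross in the interior of $\omega$; a signed count over the sublevel sets $\{h_1<h_2\}$ and $\{h_1>h_2\}$ is required, and the Fubini step must be carried out consistently with the chosen co-orientation. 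These are precisely the points Solanes and Teufel address, which is why the paper chooses to cite them rather than reprove the result. As a self-contained argument, your sketch is reasonable in strategy but not yet a proof; as a verification of hypotheses for the cited theorem, your first paragraph plus the $h_E$-convexity observation above is complete.
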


\begin{remark}
	The space $\cL^s$ is non-compact but the integral is well-defined: given a $C^2$ surface $\Sigma$, any space-like geodesic close enough to the light cone intersect $\Sigma$ in exactly two points so that the function $\#(\gamma\cap\Sigma_1(\omega))-\#(\gamma\cap\Sigma_2(\omega))$ has compact support in $\cL^s$ \cite[Lemma 3]{Solanes-Teufel}.
	
\end{remark}

We now generalize the Cauchy-Crofton formula to boundaries of space-like convex bodies in the following way. 
\begin{theorem} \label{thm-Crofton-conv}
	Let $\Omega_1^*,\Omega_2^*\subset\dS_+$ be two space-like convex bodies, $h_1,h_2$ be their radial functions, and $\omega$ be either $\Sm$ or $\{\eta \in \Sm|\, h_1(\eta)<h_2(\eta)\}$. For $i\in \{1,2\}$, let $\Sigma_i\subset\dS$ be the hypersurface defined by 
	$\Sigma_i = \{( h_i(\eta),\eta)|\, \eta \in \omega\}.$ 
	
	Then, the following equality holds
	$$
	|\Sigma_2|-|\Sigma_1|  =  \frac{m}{|\Sbb^{m-1}|}\int_{\cL^s}(\#(\gamma\cap\Sigma_1)-\#(\gamma\cap\Sigma_2))\dl(\gamma).
	$$
	
	In particular, the Cauchy-Crofton formula holds for boundaries of space-like convex bodies in $\dS_+$. Besides, for $\omega = \{\eta \in \Sm|\, h_1(\eta)<h_2(\eta)\}$ and $\dl$-a.e. $\gamma\in\cL^s$, it holds
	$$
	\#(\gamma\cap\Sigma_1)-\#(\gamma\cap\Sigma_2) \geq 0, 
	$$
	and provided $\{\eta \in \Sm|\, h_1(\eta)<h_2(\eta)\} \neq \emptyset$, we actually have
	$$\frac{m}{|\Sbb^{m-1}|}\int_{\cL^s}(\#(\gamma\cap\Sigma_1)-\#(\gamma\cap\Sigma_2))\dl(\gamma)>0.$$
\end{theorem}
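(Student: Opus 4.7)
I will establish the formula in three stages: approximate $\Omega_i^*$ by smooth strictly convex bodies and invoke Theorem~\ref{thm-ST} (Solanes--Teufel) in the smooth setting; pass to the limit to obtain the case $\omega=\Sm$; then reduce the restricted case $\omega=\{h_1<h_2\}$ to the full-sphere case via the intersection convex body, and establish positivity from convexity of intersection arcs.

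\textbf{Stage 1: the case $\omega=\Sm$.} By Proposition~\ref{prop-approximation_convex}, choose sequences $(\Omega_{i,k}^*)_k$ of smooth strictly convex space-like bodies with radial functions $h_{i,k}\to h_i$ uniformly. For each $k$, Theorem~\ref{thm-ST} applies to the $C^\infty$ closed space-like hypersurfaces $\partial\Omega_{i,k}^*$ and yields the formula at level $k$. The left-hand side converges to $|\partial\Omega_2^*|-|\partial\Omega_1^*|$ by Proposition~\ref{prop-approximation_convex}(5). For the right-hand side, three facts suffice: (i) for any space-like geodesic $\gamma$ and space-like convex body $\Omega^*=\sC\cap\dS_+$, the set $\gamma\cap\Omega^*$ is the intersection of the unit circle in the 2-plane $\Pi$ of $\gamma$ with the convex planar cone $\sC\cap\Pi$, hence an arc; consequently $\#(\gamma\cap\partial\Omega^*)\leq 2$ and the integrand is uniformly bounded by $4$; (ii) each body lies in a bounded tubular neighborhood of the equator (Remark~\ref{rem-conv}(1)), so the set of geodesics meeting $\partial\Omega_1^*\cup\partial\Omega_2^*$ has finite $\dl$-measure by Lemma~\ref{l-prop-kine-bound}(1); (iii) for $\dl$-a.e. $\gamma$, $\#(\gamma\cap\partial\Omega_{i,k}^*)$ stabilizes at $\#(\gamma\cap\partial\Omega_i^*)$, the exceptional geodesics (those tangent to or contained in a support hyperplane of some $\Omega_i^*$) forming a $\dl$-null set by Lemma~\ref{l-prop-kine-bound}(2). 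Dominated convergence concludes.

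\textbf{Stage 2: restricted $\omega$ and positivity.} Let $\omega=\{h_1<h_2\}$ and set $\Omega_\cap^*=\Omega_1^*\cap\Omega_2^*$. Since $\sC_1\cap\sC_2$ still strictly contains the future cone $\sF$ (as an intersection of open strict containments), $\Omega_\cap^*$ is a space-like convex body with radial function $h_\cap=\max(h_1,h_2)$. Apply the Stage~1 identity to the pair $(\Omega_1^*,\Omega_\cap^*)$ with $\omega=\Sm$. Since $\partial\Omega_\cap^*$ equals $\Sigma_2$ over $\omega$ and $\partial\Omega_1^*$ over $\Sm\setminus\omega$, contributions from $\Sm\setminus\omega$ cancel on both sides, giving exactly $|\Sigma_2|-|\Sigma_1|$ on the left and $\frac{m}{|\Sbb^{m-1}|}\int(\#(\gamma\cap\Sigma_1)-\#(\gamma\cap\Sigma_2))\dl$ on the right. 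For positivity, since each $h_i\geq\varepsilon>0$ (Remark~\ref{rem-conv}(1)), the arcs $\gamma\cap\Omega_i^*$ and $\gamma\cap\Omega_\cap^*$ have endpoints strictly inside $\gamma\cap\dS_+$ lying on the corresponding boundaries; hence $\#(\gamma\cap\partial\Omega_i^*), \#(\gamma\cap\partial\Omega_\cap^*)\in\{0,2\}$ for $\dl$-a.e. $\gamma$. The inclusion $\Omega_\cap^*\subset\Omega_1^*$ forces $\gamma\cap\Omega_\cap^*\subset\gamma\cap\Omega_1^*$, so $\gamma\cap\Omega_1^*=\emptyset$ implies $\gamma\cap\Omega_\cap^*=\emptyset$; this gives $\#(\gamma\cap\partial\Omega_1^*)\geq\#(\gamma\cap\partial\Omega_\cap^*)$, equivalent to $\#(\gamma\cap\Sigma_1)\geq\#(\gamma\cap\Sigma_2)$. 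For strict positivity when $\omega\neq\emptyset$, pick $\eta_0\in\omega$ and $t_0\in(h_1(\eta_0),h_2(\eta_0))$: the point $p=c'_{\eta_0}(t_0)$ lies in the open set $\operatorname{int}(\Omega_1^*)\setminus\Omega_2^*$. Applying Lemma~\ref{l-prop-kine-bound}(1) to a space-like totally geodesic hypersurface through $p$, a positive $\dl$-measure family of space-like geodesics lies entirely in this open set; each such $\gamma$ satisfies $\#(\gamma\cap\partial\Omega_1^*)=2$ and $\#(\gamma\cap\partial\Omega_\cap^*)=0$, contributing $+2$ to the integrand.

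\textbf{Main obstacle.} The trickiest part is Stage~1's limit argument, specifically verifying that the integrand has finite $\dl$-integrable support uniformly in $k$ and that intersection counts stabilize $\dl$-a.e.; both rely crucially on convexity restricting geodesic-body intersections to single arcs and on the bodies being bounded away from the equator. The positivity claim in Stage~2, while conceptually clean once one observes that endpoints of intersection arcs never touch the equator, depends on this same boundedness to force each count into $\{0,2\}$.
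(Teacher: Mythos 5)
Your Stage~1 argument for $\omega=\Sm$ follows the paper's route: approximate by smooth strictly convex bodies, invoke Theorem~\ref{thm-ST}, pass to the limit using Lemma~\ref{l-prop-kine-bound} for the a.e.\ stabilization of intersection counts and for the finiteness of the measure of geodesics meeting the bodies. That part is fine.

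Your Stage~2 derivation of the restricted formula via the intersection body $\Omega^*_\cap=\Omega^*_1\cap\Omega^*_2$ is a genuinely different and cleaner route than the paper's. The paper has to cope with the hypothesis in Theorem~\ref{thm-ST} that $h_1=h_2$ on $\partial\omega$ and that $\partial\omega$ be $C^2$; it does so by perturbing $\tanh h_1^k$ by a small offset $a_k$ (chosen to be a regular value of $\tanh h_2^k - \tanh h_1^k$) and working with shrunken domains $\omega_k$, then passing to the limit in the domains as well as in the bodies. Your observation bypasses all of this: since $h_\cap=\max(h_1,h_2)$ equals $h_2$ on $\omega$ and $h_1$ on $\Sm\setminus\omega$, the decompositions $\partial\Omega^*_1 = \Sigma_1\sqcup\Sigma'_1$ and $\partial\Omega^*_\cap = \Sigma_2\sqcup\Sigma'_1$ (with $\Sigma'_1$ the common piece over $\Sm\setminus\omega$) hold exactly, so the integrand $\#(\gamma\cap\partial\Omega^*_1)-\#(\gamma\cap\partial\Omega^*_\cap)$ simplifies pointwise to $\#(\gamma\cap\Sigma_1)-\#(\gamma\cap\Sigma_2)$ and the cancellation on the volume side is immediate. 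The Stage~1 identity over all of $\Sm$ (which needs no boundary-matching) then yields the restricted Cauchy--Crofton formula directly. This is the more elegant argument and avoids the approximation subtleties the paper grapples with.

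There is, however, a genuine error in your strict-positivity step. You claim that, by Lemma~\ref{l-prop-kine-bound}(1), ``a positive $\dl$-measure family of space-like geodesics lies entirely in the open set $\operatorname{int}(\Omega^*_1)\setminus\Omega^*_2$''. This is impossible: a space-like geodesic of $\dS$ is a closed curve (the intersection of $\dS$ with a space-like $2$-plane), so it cannot be contained in the bounded region $\operatorname{int}(\Omega^*_1)\setminus\Omega^*_2$. Worse, this contradicts the very next sentence, since a geodesic entirely contained in $\operatorname{int}(\Omega^*_1)$ would have $\#(\gamma\cap\partial\Omega^*_1)=0$, not $2$. What you actually need is a positive-measure family of geodesics that \emph{enter and exit} $\operatorname{int}(\Omega^*_1)$ while \emph{avoiding} $\Omega^*_\cap$ altogether; merely passing through $\operatorname{int}(\Omega^*_1)\setminus\Omega^*_\cap$ is not enough, because such a geodesic could also cross $\Omega^*_\cap$, in which case its contribution is $0$. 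The paper's device is the correct fix: take a slightly enlarged convex body $\tilde\Omega^*\supset\Omega^*_2$ (an $a$-neighborhood, so still containing $\Omega^*_\cap$) whose boundary passes through a point $p\in\operatorname{int}(\Omega^*_1)\setminus\Omega^*_2$, and consider a support hyperplane $H^*$ to $\tilde\Omega^*$ at $p$. Geodesics lying in $H^*$, or in a sufficiently thin $\varepsilon$-tube around $H^*$, cannot touch $\Omega^*_\cap$; those among them meeting a neighborhood of $p$ in $H^*$ enter $\operatorname{int}(\Omega^*_1)$ and therefore cross $\partial\Omega^*_1$ twice; and Remark~\ref{rem-lader} gives this set positive $\dl$-measure. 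Your argument needs to be replaced by this (or an equivalent separating-hyperplane construction).
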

\begin{proof}
	For $i\in \{1,2\}$, let $(\Omega^*_{i,k})_{k\in\N}$ be the sequence of smooth strictly convex bodies in $\dS_+$ given by Proposition \ref{prop-approximation_convex} when applied to $\Omega_i=\Omega_i^{**}$. Writing $(r_i^{*,k}= h_i^k)_{k \in \N}$ 
	the associated sequences of radial
	 functions,	the space-like convex bodies $\Omega_{i,k}^*\subset\dS$ have smooth boundaries given by $\dom_{i,k}^*=\{(h_i^k(\eta),\eta)\ |\ \eta\in\Sm\}$.
	
	Let $\omega$ be either $\Sm$ or $\{\eta \in \Sm|\, h_1(\eta)<h_2(\eta)\}$, and define
	$$
	\Sigma_i= \{( h_i(\eta),\eta)|\, \eta \in \omega\} \qquad\mbox{and}\qquad
	\Sigma_{i,k}= \{( h_i^k(\eta),\eta)|\, \eta \in \omega\}.
	$$
	If $\omega=\Sm$ then $\Sigma_{i,k}=\dom_{i,k}^*$, and   Proposition~\ref{prop-approximation_convex} (5) guarantees that 
	$$
	\lim_{k\rightarrow+\infty}|\Sigma_{i,k}|=|\dom_i^*|=|\Sigma_i|.
	$$
	If $\emptyset\neq\omega\subsetneq\Sm$, let us prove the same volume convergence holds when the hypersurfaces are restricted to $\omega$. To this aim, let $(a_k)_{k\in\N}$ be a sequence of positive numbers, regular values of  $ \tanh h_2^k-\tanh h_1^k$, and converging to zero. We further assume that $a_k \geq 5\max\{ ||h_1 -h_1^k||_{\infty},||h_2 -h_2^k||_{\infty} \}$ and define
	$$
	\omega_k = \{\eta \in \Sm|\, \tanh h_1^k(\eta)-\tanh h_2^k(\eta) < -a_k\}.
	$$ 
	 
	Let us also define $\bar{h}_1^k= \argth (\tanh h_1^k +a_k)$ and $\bar{h}_2^k= h_2^k$. Note that $\tanh\bar{h}_1^k = \tanh h_1^k +a_k$ is the support function of a Euclidean convex body (namely the $a_k$-neighborhood of the Euclidean convex body whose support function is $\tanh h_1^k$, cf. \cite{Schneider}). Let $T_i^k$ denote the mapping relative to $\Omega_{i,k}=\Omega_{i,k}^{**} $ as in \eqref{eqn-definition_T}. In order to approximate $\Omega_i^*$ and $\Sigma_i$, we shall make use of $\bar{\Omega}^* _{i,k}$, the space-like convex body with radial function $\bar{h}_i^k$, 
	 and
	$$
	\bar{\Sigma}_{i,k}=\{(\bar{h}^k_i(\eta), \eta)|\ \eta \in \omega_k\} \subset \partial\bar{\Omega}_{i,k}^*.
	$$
	Given $\eta \in \omega_k$, note that $T_1^k(\eta)$ is not, in general, the set of normal vectors to $\partial\bar{\Omega}_{1,k}^*$ at $(\bar{h}^k_1(\eta), \eta)$.  	
	
	Let $\bar{r}_i^k$ denote the radial function associated to  $\bar{h}^k_i$, and $r_i$ denote the one associated to $h_i$. By construction, Proposition~\ref{prop-approximation_convex} implies the uniform convergences
	\begin{equation}\label{eq-UnifConv}
		\lim_{ k \rightarrow +\infty} \bar{h}^k_i = h_i \qquad\text{ and }\qquad \lim_{ k \rightarrow +\infty} \bar{r}_i^k = r_i,
	\end{equation}
	while, for $\sigma$-a.e. $\eta\in\Sm$,
	$$
	\lim_{ k \rightarrow +\infty} T_i^k(\eta) = T_i(\eta).
	$$
	
	By assumption on the $(\Omega_i^*)$'s and the above statements, there exists $c>0$ such that for all large $k$
	\begin{equation}\label{eq-UnifBound}
		1/c \leq h_1, h_2,r_1,r_2 \leq c  \qquad\mbox{ and }\qquad   1/c \leq \bar{h}_1^k, \bar{h}_2^k,\bar{r}_1^k,\bar{r}_2^k \leq c.
	\end{equation}

	Our goal is to show $\lim_{k \rightarrow + \infty}|\bar{\Sigma}_{i,k}|= |\Sigma_i|$. To achieve this aim, it is more convenient to use measures on $\Sm$ as in \eqref{e_def_AreaMea} rather than the area measures. Thus, we set
	$$
	\tilde{\sigma}_i = \frac{\cosh^{m+1}  h_i}{\cosh r_i \circ T_i } \sigma \qquad\text{  and  }\qquad \tilde{\sigma}_i^k=\frac{\cosh^{m+1} \bar{h}_i^k }{\cosh{\bar{r}}_i^k \circ T_i^k} \sigma.
	$$
	
	The estimates on the radial and support functions recalled above combined with the dominated convergence theorem, yield the total variation convergence of the measures $\tilde{\sigma}_i^k$ to $\tilde{\sigma}_i$, namely 
	\begin{equation}\label{eqn-sTV}
		\lim_{k \rightarrow +\infty} \sup_{f}\left| \int f\, \tilde{\sigma}_i^k - \int f\,\tilde{\sigma}_i\right| =0,
	\end{equation}
	where the supremum is taken over the set of all Borel functions $f : \Sm \rightarrow [-1,1] $.
	
	From the above estimate, we obtain, for any $\varepsilon >0$ and any sufficiently large $k$, 
	$$
	|\tilde{\sigma}_i^k(\omega_k) -\tilde{\sigma}_i(\omega_k)| \leq \varepsilon.
	$$
	To conclude, we infer from combining $h_i^k\rightarrow h_i$ together with the properties of $(a_k)_{k \in \N}$ that 
	$$
	\omega = \liminf \omega_k.
	$$
	Thus, we get
	$$
	\lim_{k \rightarrow + \infty}   \tilde{\sigma}_i (\omega_k) =  \tilde{\sigma}_i (\omega).
	$$
	Therefore, Theorem \ref{thm-ST} gives us
	\begin{equation}\label{eqn-proof-uni}
		|\bar{\Sigma}_{2,k}|-|\bar{\Sigma}_{1,k}|  =  \frac{m}{|\Sbb^{m-1}|}\int_{\cL^s}(\#(\gamma \cap \bar{\Sigma}_{1,k})-\#(\gamma\cap \bar{\Sigma}_{2,k}))\dl(\gamma)
	\end{equation}
	while our previous argument implies
	\begin{multline*}
		\lim_{k \rightarrow +\infty} |\bar{\Sigma}_{2,k}|-|\bar{\Sigma}_{1,k}| = \lim_{k \rightarrow +\infty} | \tilde{\sigma}_2 (\omega_k)|-| \tilde{\sigma}_1 (\omega_k)|  \\
		= | \tilde{\sigma}_2 (\omega)|-| \tilde{\sigma}_1 (\omega)|=|\Sigma_2|-|\Sigma_1|.
	\end{multline*}

	We are left with proving the convergence of the right-hand side of \eqref{eqn-proof-uni}  to the same expression with $\Sigma_i$ instead of $\bar{\Sigma}_{i,k}$. The same convergence needs to be proved when $\omega =\Sm$ and the integrand involves $\Sigma_{i,k}$ instead of $\bar{\Sigma}_{i,k}$.
	
	According to Lemma \ref{l-prop-kine-bound}, the set of geodesics in $\cL^s$ lying in an arbitrary support hyperplane to a space-like convex body in $\dS_+$ is negligible. In what follows, we discard the negligible set of geodesic lines lying in a support hyperplane to one of the following space-like convex bodies: $\Omega_1^*$, $\Omega_2^*$, and $\Omega_1^* \cap \Omega_2^*$.
	
	Let us first assume $\omega = \{\eta \in \Sm|\, h_1(\eta)<h_2(\eta)\}$. The proof is based on the fact that the value of $\#(\gamma\cap\Sigma_1)-\#(\gamma\cap\Sigma_2)$ only depends on topological properties relative to $\Omega_1^*$ and $\Omega_2^*$.  Precisely, up to the negligible set of geodesics introduced above, only the following configurations can occur:
	\begin{itemize}
		\item if $\gamma \cap (\Omega_1^* \cup \Omega_2^*)= \emptyset$ then $\#(\gamma\cap\Sigma_1)=\#(\gamma\cap\Sigma_2)=0$,
		\item if $\gamma \cap (\mathring{\Omega}_1^* \cup \mathring{\Omega}_2^*) \neq \emptyset$ then
		\begin{itemize}
			\item either  $\gamma \cap \mathring{\Omega}_2^* \neq \emptyset$ and \\
				$\#(\gamma\cap\Sigma_1)-\#(\gamma\cap\Sigma_2)=0$ (each intersection number belongs to $\{0,1,2\}$)
			\item or  $\gamma \cap {\Omega}_2^* = \emptyset$ (thus  $\gamma \cap \mathring{\Omega}_1^* \neq \emptyset$) and \\
			$\#(\gamma\cap\Sigma_1)=2$, $\#(\gamma\cap\Sigma_2)=0$.
		\end{itemize}
	\end{itemize}	 
	In particular, we infer that  $\#(\gamma\cap\Sigma_1)-\#(\gamma\cap\Sigma_2) \geq 0$ for $\dl$-a.e. $\gamma$.
	
	Note the above discussion is valid for any pair of space-like convex bodies and that it might be easier to convince oneself of the above case-by-case study by using the Euclidean counterparts of our space-like convex bodies as described in Section~\ref{sec-geometry_convex_bodies}. Moreover, note also that, for a given $\gamma$, each of the four conditions is open with respect to Hausdorff topology. Therefore, combining this together with the Hausdorff convergence of $\bar{\Omega}_{i,k}^*$ to $\Omega_i^*$, we get, for $\dl$-a.e. geodesic $\gamma$ and for $k$ sufficiently large\footnote{We also discard the corresponding negligible sets of geodesics relative to each $\bar{\Omega}_{1,k}^*$, $\bar{\Omega}_{2,k}^*$, and $\bar{\Omega}_{1,k}^*\cap\bar{\Omega}_{2,k}^*$.}, 
	$$
	\#(\gamma\cap\bar{\Sigma}_{1,k})-\#(\gamma\cap\bar{\Sigma}_{2,k})= \#(\gamma\cap\Sigma_1)-\#(\gamma\cap\Sigma_2).
	$$
	
	To complete the proof, we produce a set of geodesics of finite $\dl$-measure on which all $f_k(\gamma):=\#(\gamma\cap\bar{\Sigma}_{1,k})-\#(\gamma\cap\bar{\Sigma}_{2,k})$ are concentrated. By the above discussion, the function	$f_k$ vanishes whenever $\gamma$ meets the interior of $\bar{\Omega}_{1,k}^*\cap\bar{\Omega}_{2,k}^*$. Therefore, according to \eqref{eq-UnifBound}, $f_k(\gamma)=0$ whenever $\gamma(s) =  \cos (s) \,c'_{\xi_a}(h_a) +\sin (s)\, \xi_b $ with $h_a > c$. In other terms, the $c$-neighborhood of the equator $\Sm$ (where $c$ is as in \eqref{eq-UnifBound}) satisfies the required properties as proved in Lemma \ref{l-prop-kine-bound} (1).

	The case $\omega =\Sm$ can be proved along the same lines and is even simpler. The details are left to the reader.

	\vspace*{0.2cm}

	It remains to prove the last inequality assuming 
	$$
	\omega=\{\eta \in \Sm|\, h_1(\eta)<h_2(\eta)\} \neq \emptyset.
	$$
	Let $\eta_0\in\omega$ and $a>0$. Consider the space-like convex body $\tilde{\Omega}^*$ whose support function $\tilde{h}^*$ is defined by $\coth(\tilde{h}^*)=\coth(r_2) + {a}$, that is, the convex body in $\dS_+$ whose Euclidean counterpart in $\{1\}\times \R^{m+1}$ is the ${a}$-neighborhood of $\Omega^*_{2,E}$. Let $\tilde{h}$ denote the radial function of $\tilde{\Omega}^*$.
	
	For $a>0$ sufficiently small, the convex $\tilde{\Omega}^*$ contains $\Omega^*_2$ in its interior, and $c'_{\eta_0}(\tilde{h}(\eta_0))\in\mathring{\Omega}^*_1\setminus\Omega^*_2$. Therefore, given $H^*$ a support hyperplane to $\tilde{\Omega}^*$ at $c'_{\eta_0}(\tilde{h}(\eta_0))$,  any geodesic $\gamma$ contained in $H^*$ and intersecting $\mathring{\Omega}_1$ satisfies $\#(\gamma\cap\Sigma_1)=2$ and $\#(\gamma\cap\Sigma_2)=0$.
	
	By a  compactness argument, there exists $0<\varepsilon$ small such that the $\varepsilon$-neighborhood of $H^*$ does not intersect $\Omega^*_2$, and for any geodesic $\gamma$ contained in this $\varepsilon$-neighborhood and intersecting $\mathring{\Omega}_1 \cap H^*$, the equalities $\#(\gamma\cap\Sigma_1)=2$ and $\#(\gamma\cap\Sigma_2)=0$ still hold true. Since this set of geodesics has positive $\dl$-measure (cf. Lemma \ref{l-prop-kine-bound} and Remark \ref{rem-lader}), we infer
	$$
	\int_{\cL^s}(\#(\gamma\cap\partial\Sigma_1)-\#(\gamma\cap\partial\Sigma_2))\dl(\gamma)>0.
	$$
	\end{proof}

%
%
\section{Properties of $c$-concave functions}\label{app-analysis}
%
%
In this appendix we gather the main properties of $c$-concave functions on $\Sm$, where the cost function $c:\Sm\times\Sm\to\R\cup\{+\infty\}$ is given by
$$
c(\eta,\xi)=\left\{\begin{array}{rl}
-\ln(\langle\eta,\xi\rangle) & \mbox{ if } \langle\eta,\xi\rangle > 0 \\
+\infty & \mbox{ otherwise.}
\end{array}\right..
$$
Note that the cost function only depends on the geodesic distance in $\Sm$ between $\eta$ and $\xi$, as $c(\eta,\xi)=\Lambda(d(\eta,\xi))$, where
$$
\Lambda(r) = \left\{\begin{array}{rl}
-\ln(\cos r) & \mbox{ if } r < \hpi \\
+\infty & \mbox{ otherwise}
\end{array}\right..
$$
In the following we will use that $\Lambda$ is convex on $[0,\hpi)$ and $\lim_{r\to\hpi}\Lambda(r)=+\infty$.

Most of these properties of $c$-concave functions are now classical, at least when the cost function is real-valued. The main references in the real-valued case are \cite[\S 2.4]{Villani-1} or \cite[Chapter 5]{Villani-2}. To treat our particular case, we adapt arguments from \cite{Bertrand-2}.

\begin{definition} \label{def-c-concavity}
	Let $\psi:\Sm\to\R\cup\{-\infty\}$ be a function which is not identically $-\infty$.
	\begin{enumerate}
		\item The function $\psi$ is $c$-concave if there exists $\varphi:\Sm\to\R\cup\{-\infty\}$ such that
		$$
		\psi(\cdot) = \inf_{\eta\in\Sm}\{c(\eta,\cdot)-\varphi(\eta)\}.
		$$
		\item The $c$-transform of $\psi$ is the function $\psi^c:\Sm\to\R\cup\{-\infty\}$ defined by
		$$
		\psi^c(\cdot) = \inf_{\xi\in\Sm}\{c(\cdot,\xi)-\psi(\xi)\}.
		$$
		\item If $\psi$ is $c$-concave, its $c$-superdifferential is
		$$ \partial^c\psi(\xi) = \bigl\{\eta\in\Sm\ \bigl|\ \forall\zeta\in\Sm\ \ c(\eta,\xi) - \psi(\xi) \le c(\eta,\zeta)-\psi(\zeta) \bigr\}
		$$
		
		\item A pair of functions $(\varphi,\psi)$ is a $c$-conjugate pair if $\varphi=\psi^c$ and $\psi=\varphi^c$. 
	\end{enumerate}
\end{definition}
\begin{remark} \label{rem-conjugate_pairs}
	It is well-known that a function $\psi$ is $c$-concave if  and only if the image of $\psi^c$ is contained in $\R\cup\{-\infty\}$ and $\psi^{cc}=\psi$ (where $\psi^{cc}$ is a notation for $(\psi^c)^c$). In particular, the $c$-conjugate pairs are exactly the pairs $(\psi^c,\psi)$ where $\psi$ $c$-concave.
	
	If $(\varphi,\psi)$ is a $c$-conjugate pair then the $c$-superdifferential of $\psi$ satisfies
	$$ \partial^c\psi(\xi)=\{\eta\in\Sm\ |\ \psi^c(\eta)+\psi(\xi)=c(\eta,\xi)\},$$
	 so that $\partial^c\psi:\Sm\rightrightarrows\Sm$ is a multivalued map. Defining similarly $\partial^c \varphi$, we observe that the superdifferentials of $\psi$ and $\varphi$ are inverse of each other, namely
	$$
	\eta\in\partial^c\psi(\xi) \Leftrightarrow \xi\in\partial^c\varphi(\eta).
	$$
\end{remark}

The following proposition originates from  \cite[Proposition 4.4]{Bertrand-2}.
\begin{proposition} \label{prop-continuity-c-transform}
	Let $\psi:\Sm\to\R\cup\{-\infty\}$ be a function bounded from above such that
	\begin{equation} \label{eqn-condition_on_psi}
	\forall\eta\in\Sm\ \ B(\eta,\hpi)\cap\{\psi>-\infty\}\not=\emptyset.
	\end{equation}
	Then $\psi^c$ is real-valued and Lipschitz regular on $\Sm$, moreover its Lipschitz constant only depends on upper bounds of $\psi$ and $\psi^c$.
\end{proposition}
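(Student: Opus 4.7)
The plan is to first show that $\psi^c$ takes finite values everywhere, then to derive a local Lipschitz bound whose magnitude is controlled by the blow-up of $\Lambda(r)=-\ln\cos r$ near $r=\pi/2$, and finally to globalize via compactness of $\Sm$.

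Let $M=\sup_{\Sm}\psi<+\infty$. Since $c\ge 0$, for every $\eta,\xi$ we have $c(\eta,\xi)-\psi(\xi)\ge -M$, so $\psi^c(\eta)\ge -M$. Conversely, the hypothesis \eqref{eqn-condition_on_psi} provides, for each $\eta\in\Sm$, some $\xi_0\in B(\eta,\hpi)$ with $\psi(\xi_0)>-\infty$; then $c(\eta,\xi_0)<+\infty$ and hence $\psi^c(\eta)\le c(\eta,\xi_0)-\psi(\xi_0)<+\infty$. Thus $\psi^c$ is real-valued.

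The core idea for the Lipschitz bound is that an upper bound on $\psi^c$ automatically restricts the effective minimizers in $\psi^c(\eta)=\inf_\xi\{c(\eta,\xi)-\psi(\xi)\}$ to lie in a fixed ball of radius strictly less than $\hpi$ around $\eta$, thereby avoiding the singularity of the cost. Fix $\eta_1\in\Sm$, set $a=\psi^c(\eta_1)$, and for every $\varepsilon\in(0,1]$ choose $\xi_\varepsilon$ with $c(\eta_1,\xi_\varepsilon)-\psi(\xi_\varepsilon)\le a+\varepsilon$. Then $c(\eta_1,\xi_\varepsilon)\le a+M+1$, so $d(\eta_1,\xi_\varepsilon)\le R_1:=\arccos\bigl(\ex^{-(a+M+1)}\bigr)<\hpi$. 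Pick any $\rho\in(0,\tfrac12(\hpi-R_1))$. For $\eta_2$ with $d(\eta_1,\eta_2)\le\rho$ the triangle inequality gives $d(\eta_2,\xi_\varepsilon)\le R_1+\rho<\hpi$; since $\Lambda$ is convex increasing on $[0,\hpi)$ with $\Lambda'=\tan$, the mean value theorem yields
$$
c(\eta_2,\xi_\varepsilon)-c(\eta_1,\xi_\varepsilon)\le\tan(R_1+\rho)\,d(\eta_1,\eta_2).
$$
Combining with $\psi^c(\eta_2)\le c(\eta_2,\xi_\varepsilon)-\psi(\xi_\varepsilon)$ and the definition of $\xi_\varepsilon$ gives
$$
\psi^c(\eta_2)-\psi^c(\eta_1)\le\tan(R_1+\rho)\,d(\eta_1,\eta_2)+\varepsilon,
$$
and letting $\varepsilon\to 0$, then swapping the roles of $\eta_1$ and $\eta_2$, produces a local Lipschitz estimate. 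In particular $\psi^c$ is continuous.

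To globalize, note that by continuity on the compact sphere $\psi^c$ is bounded above by some $M'<+\infty$. Setting $R:=\arccos\bigl(\ex^{-(M'+M+1)}\bigr)$ and any $\rho\in(0,\hpi-R)$, the preceding local constant $\tan(R_1(\eta_1)+\rho)$ is uniformly bounded by $L:=\tan(R+\rho)$ for all $\eta_1$. Given arbitrary $\eta,\eta'\in\Sm$, I would join them by a minimizing geodesic, subdivide it into $\lceil\pi/\rho\rceil$ sub-arcs of length at most $\rho$, and sum the local estimate along the chain to obtain $|\psi^c(\eta)-\psi^c(\eta')|\le L\,d(\eta,\eta')$. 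The constant $L$ depends only on $M$ and $M'$, as required. The only real subtlety is the interplay between the blow-up of $c$ near distance $\hpi$ and the upper bounds, which is precisely what forces the near-minimizers to stay in a ball of radius strictly less than $\hpi$ and makes the argument robust.
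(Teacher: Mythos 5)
Your proof is correct and follows essentially the same strategy as the paper's: the key observation in both is that upper bounds on $\psi$ and $\psi^c$ force the $\varepsilon$-near-minimizers in the definition of $\psi^c(\eta)$ to lie in a ball about $\eta$ of radius strictly less than $\hpi$, which keeps the cost away from its singularity, and the global Lipschitz bound is then obtained by chaining local estimates along minimizing geodesics. The one place you take a slight detour is in establishing the \emph{a priori} upper bound on $\psi^c$: you derive continuity from a pointwise local Lipschitz estimate with point-dependent constants and then invoke compactness, whereas the paper simply notes that $\psi^c$, being an infimum of continuous functions, is upper semi-continuous, hence bounded above on the compact sphere — a shorter route to the same conclusion, after which the Lipschitz constant can be taken uniform from the start.
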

\begin{proof}
	Fix some $\eta\in\Sm$. According to \eqref{eqn-condition_on_psi}, there exists $\xi\in\Sm$ such that $c(\eta,\xi)<+\infty$ and $-\psi(\xi)<+\infty$. Therefore, we have $\psi^c(\eta)<+\infty$. On the other hand, if $A$ is an upper bound of $\psi$, we have $c(\eta,\xi)-\psi(\xi)\ge-A$ for any $\xi\in\Sm$; this implies $\psi^c(\eta)\ge-A$. Therefore the function $\psi^c$ is real-valued.
	
	Being an infimum of continuous functions, $\psi^c$ is upper semi-continuous. Since $\Sm$ is compact, $\psi^c$ is bounded from above.
	
	Let $A$ and $B$ be upper bounds of $\psi$ and $\psi^c$ respectively. In order to prove that $\psi^c$ is Lipschitz with a constant depending only on $A$ and $B$, we begin by proving it locally. Since $\psi^c+A\ge 0$, $A+B+1>0$ and $\Lambda^{-1}(A+B+1)$ is well-defined in $(0,\hpi)$. Let $\alpha=\hpi-\Lambda^{-1}(A+B+1)$ and fix some $\eta_0\in\Sm$. For any $\eta\in B(\eta_0,\frac{\alpha}{2})$ and any $\xi$ such that $d(\eta_0,\xi)\ge\hpi-\frac{\alpha}{2}$, we have $d(\eta,\xi)\ge\hpi-\alpha$, thus $c(\eta,\xi)\ge A+B+1\ge \psi(\xi)+\psi^c(\eta)+1$. Therefore $\psi^c(\eta)+1\le c(\eta,\xi)-\psi(\xi)$, and, by definition of $\psi^c$, we get
	$$
	\psi^c(\eta)=\inf\bigl\{c(\eta,\xi)-\psi(\xi)\ \bigl|\ \xi\in B(\eta_0,\hpi-\frac{\alpha}{2}) \bigr\}
	$$
	for any $\eta\in B(\eta_0,\frac{\alpha}{2})$. For any $\xi\in B(\eta_0,\hpi-\frac{\alpha}{2})$, the map $\eta\mapsto c(\eta,\xi)-\psi(\xi)$ is $L_{A,B}$-Lipschitz on $B(\eta_0,\frac{\alpha}{4})$, where $L_{A,B}=\Lambda'(\hpi-\frac{\alpha}{4})$. As an infimum of $L_{A,B}$-Lipschitz functions, $\psi^c$ is $L_{A,B}$-Lipschitz on $B(\eta_0,\frac{\alpha}{4})$. 
	
	For any $\eta,\eta'$ in $\Sm$, consider a finite family of points $\eta=\eta_0,\dots,\eta_k,\dots,\eta_N=\eta'$ on a minimizing geodesic between $\eta$ and $\eta'$ such that $d(\eta_k,\eta_{k+1})<\frac{\alpha}{2}$. We get
	\begin{eqnarray}
	|\psi^c(\eta)-\psi^c(\eta')| & \le & \sum_{k=0}^{N-1}|\psi^c(\eta_k)-\psi^c(\eta_{k+1})| \nonumber \\
	& \le & L_{A,B} \sum_{k=0}^{N-1}d(\eta_k,\eta_{k+1}) \nonumber \\
	& \le & L_{A,B}d(\eta,\eta'), \nonumber
	\end{eqnarray}
	and $\psi^c$ is $L_{A,B}$-Lipschitz.
\end{proof}

\begin{remark} \label{rem-double_convexification}
	Starting from a pair $(\varphi,\psi)$ of functions such that $\psi:\Sm\to\R\cup\{-\infty\}$, \eqref{eqn-condition_on_psi} holds, and  $\forall \eta,\xi\ \ \varphi(\eta)+\psi(\xi)\le c(\eta,\xi)$, the double convexification trick gives a $c$-conjugate pair of functions which are greater or equal to $\varphi$ and $\psi$ respectively.
	
	Indeed, for any $\eta,\xi$,  $\varphi(\eta)\le c(\eta,\xi)-\psi(\xi)$ implies, by taking the infimum on $\xi$, $\forall \eta\ \ \varphi(\eta)\le\psi^c(\eta)$. Similarly,  $\psi(\xi)\le c(\eta,\xi)-\psi^c(\eta)$ yields $\psi(\xi)\le\psi^{cc}(\xi)$. Then, we infer from Remark \ref{rem-conjugate_pairs} that $(\psi^c,\psi^{cc})$ is a $c$-conjugate pair with $\varphi\le\psi^c$ and $\psi\le\psi^{cc}$.
\end{remark}

\begin{proposition}\label{prop-min_c-concave}
	Any $c$-conjugate pair $(\varphi,\psi)$ satisfies:
	\begin{enumerate}
		\item The function $\varphi$ is differentiable $\sigma$-a.e., moreover $\partial^c\varphi(\eta)$ is single-valued whenever $\varphi$ is differentiable at $\eta$. The same holds for $\psi$.
		\item $\max(\varphi)+\min(\psi)=0$ and $\min(\varphi)+\max(\psi)=0$.
		\item If $\psi(\xi_0)=\min(\psi)$ then, for any $\xi\in\Sm$, $0\le \psi(\xi)-\psi(\xi_0)\le c(\xi_0,\xi)$.
		The same estimate holds for $\varphi$.
	\end{enumerate}
\end{proposition}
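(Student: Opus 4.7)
The plan is to treat the three items in succession, leveraging the Lipschitz regularity of $c$-conjugate pairs already established in Proposition~\ref{prop-continuity-c-transform}.

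For item (1), I would first observe that for a non-trivial $c$-conjugate pair $(\varphi,\psi)$, both functions are real-valued and bounded (so in particular the hypotheses of Proposition~\ref{prop-continuity-c-transform} hold symmetrically), hence Lipschitz on $\Sm$. Rademacher's theorem then yields differentiability $\sigma$-a.e. For uniqueness of the $c$-superdifferential at a differentiability point $\eta$, if $\xi\in\partial^c\varphi(\eta)$ then the function $\eta'\mapsto c(\eta',\xi)-\varphi(\eta')$ is minimized at $\eta$, giving $\nabla\varphi(\eta)=\nabla_\eta c(\eta,\xi)$ after differentiation along arbitrary tangent directions. A direct ambient-space computation gives $\nabla_\eta c(\eta,\xi) = \eta - \xi/\langle\eta,\xi\rangle$, an expression that uniquely determines the unit vector $\xi$. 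The same argument applies to $\psi$ by symmetry.

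For item (2), I will prove $\min\varphi+\max\psi=0$; the other identity follows by swapping the roles in the $c$-conjugate pair. The upper bound comes from evaluating $\varphi$ at a maximizer $\xi^*\in\arg\max\psi$: by the $c$-transform formula,
\[
\varphi(\xi^*) = \inf_{\xi\in\Sm}\bigl(c(\xi^*,\xi)-\psi(\xi)\bigr)\le c(\xi^*,\xi^*)-\psi(\xi^*) = -\max\psi,
\]
so $\min\varphi\le-\max\psi$. The reverse bound follows from $c\ge 0$: for any $\eta$, $\varphi(\eta)\ge\inf_\xi(-\psi(\xi))=-\max\psi$.

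Item (3) is the most delicate step and constitutes the main obstacle, since the naive approach via a triangle inequality for $c$ fails ($\Lambda(r)=-\ln\cos r$ is super-additive). The first inequality is trivial from the definition of $\xi_0$. For the second, the plan is to show that the infimum $\psi(\xi_0)=\inf_\eta(c(\eta,\xi_0)-\varphi(\eta))$ is attained at $\eta=\xi_0$ itself. Lower semi-continuity and compactness give an attainer $\eta_0$ with $\psi(\xi_0)=c(\eta_0,\xi_0)-\varphi(\eta_0)$. Plugging $\xi=\eta_0$ into the inf defining $\varphi(\eta_0)$ yields $\varphi(\eta_0)\le -\psi(\eta_0)$, and since $\psi(\eta_0)\ge\min\psi=\psi(\xi_0)$:
\[
\psi(\xi_0) = c(\eta_0,\xi_0)-\varphi(\eta_0) \ge c(\eta_0,\xi_0)+\psi(\eta_0) \ge c(\eta_0,\xi_0)+\psi(\xi_0).
\]
This forces $c(\eta_0,\xi_0)\le 0$; combined with $c\ge 0$ this gives $c(\eta_0,\xi_0)=0$, i.e.\ $\eta_0=\xi_0$. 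Hence $\varphi(\xi_0)+\psi(\xi_0)=0$, and for any $\xi\in\Sm$ the defining inequality $\varphi(\xi_0)+\psi(\xi)\le c(\xi_0,\xi)$ rewrites as $\psi(\xi)-\psi(\xi_0)\le c(\xi_0,\xi)$. The statement for $\varphi$ follows by applying the same argument to the $c$-conjugate pair $(\psi,\varphi)$.
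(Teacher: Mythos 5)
Your proof is correct and follows essentially the same strategy as the paper's: Rademacher plus the first-order condition at a differentiability point for (1), the min-max sandwich via $c\ge 0$ and $c(\zeta,\zeta)=0$ for (2), and forcing $c(\eta_0,\xi_0)=0$ at a point of attainment for (3). The only cosmetic differences are that you carry out the gradient computation $\nabla_\eta c(\eta,\xi)=\eta-\xi/\langle\eta,\xi\rangle$ explicitly (where the paper defers to McCann) and that in (3) you chain the inequalities directly rather than quoting item (2), but the underlying argument is the same.
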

\begin{proof}
	According to Rademacher's theorem, the Lipschitz function $\varphi$ is differentiable almost everywhere. Moreover, assuming $\varphi$ is differentiable at $\eta\in\Sm$, it is well-known that $\varphi(\eta)+\psi(\xi)=c(\eta,\xi)$ admits a unique solution $\xi$ which can be expressed in terms of $\nabla\varphi(\eta)$ and the exponential map (see for instance \cite{McCann}).  
	
	Since the pair $(\varphi,\psi)$ is $c$-conjugate, Proposition \ref{prop-continuity-c-transform} implies  the functions $\varphi$ and $\psi$ are continuous on $\Sm$, in particular they have maximal and minimal points.
	
	For any $\zeta\in\Sm$,  $\varphi(\zeta)+\psi(\zeta)\le c(\zeta,\zeta)=0$ yields $\varphi(\zeta)+\min(\psi)\le 0$. Maximizing this with respect to $\zeta$ gives $\max(\varphi)+\min(\psi)\le 0$. Conversely, the cost function $c$ being non-negative we have, for any $\eta$ and $\xi$, $c(\eta,\xi)-\varphi(\eta)\ge-\max(\varphi)$. Taking the infimum over $\eta$ gives $\psi(\xi)\ge-\max(\varphi)$, therefore $\min(\psi)\ge-\max(\varphi)$. This proves the first equality in (2), the proof of the other is identical.
	
	Let $\xi_0\in\Sm$ be such that $\psi(\xi_0)=\min(\psi)$. For any $\eta\in\partial^c\psi(\xi_0)$,
	$$
	0 \le c(\eta,\xi_0) = \varphi(\eta) + \psi(\xi_0) \le \max(\varphi) + \min(\psi) = 0.
	$$
	This implies $\eta=\xi_0$ and $\varphi(\xi_0)=-\psi(\xi_0)$. Combining the latter equality with $\varphi(\xi_0)+\psi(\xi)\le c(\xi_0,\xi)$ yields
	$$
	0 \le \psi(\xi)-\psi(\xi_0)\le c(\xi_0,\xi),
	$$
	which proves (3) for $\psi$. The proof for $\varphi$ is similar.
\end{proof}

\begin{proposition}\label{prop-compacity}
	The set $\cB_M=\{ (\varphi,\psi)\ |\ \psi=\varphi^c,\ \varphi=\psi^c,\ \|\psi\|_\infty\le M \}$ is compact in $C^0$ topology.
\end{proposition}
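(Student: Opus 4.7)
The plan is to combine Proposition~\ref{prop-min_c-concave}(2) with the regularity result of Proposition~\ref{prop-continuity-c-transform} to equi-bound and equi-Lipschitz all functions appearing in $\cB_M$, and then conclude by Arzelà-Ascoli together with the standard $1$-Lipschitz property of the $c$-transform in sup-norm.

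First, I would observe that the bound on $\psi$ automatically transfers to $\varphi$. Indeed, for any $(\varphi,\psi)\in\cB_M$, Proposition~\ref{prop-min_c-concave}(2) yields $\max\varphi=-\min\psi$ and $\min\varphi=-\max\psi$, so $\|\psi\|_\infty\le M$ immediately forces $\|\varphi\|_\infty\le M$. In particular both functions are everywhere finite and bounded above by $M$, so condition \eqref{eqn-condition_on_psi} is trivially satisfied. Applying Proposition~\ref{prop-continuity-c-transform} to $\psi$ (resp.\ to $\varphi$) then shows that $\varphi=\psi^c$ (resp.\ $\psi=\varphi^c$) is Lipschitz with a constant $L_M$ depending only on $M$, since the two relevant upper bounds (that of $\psi$ and of $\psi^c=\varphi$) are both bounded by $M$.

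Given any sequence $\bigl((\varphi_n,\psi_n)\bigr)_{n\in\N}\subset\cB_M$, the families $(\varphi_n)_n$ and $(\psi_n)_n$ are thus uniformly bounded and equi-Lipschitz on the compact metric space $\Sm$, so Arzelà-Ascoli provides a subsequence along which $\varphi_n\to\varphi_\infty$ and $\psi_n\to\psi_\infty$ uniformly. The bound $\|\psi_\infty\|_\infty\le M$ passes to the uniform limit. To identify the limit as a $c$-conjugate pair, I would rely on the elementary contraction property
\[
\|f^c-g^c\|_\infty \le \|f-g\|_\infty
\]
valid for all bounded real-valued functions $f,g$ on $\Sm$; this follows at once from $f^c(\eta)-g^c(\eta)=\inf_\xi(c(\eta,\xi)-f(\xi))-\inf_\xi(c(\eta,\xi)-g(\xi))$ and the fact that taking infima is $1$-Lipschitz in sup norm (the value $+\infty$ of $c$ causes no issue, as the corresponding $\xi$ contribute $+\infty$ to both infima). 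Applied to $f=\varphi_n$ and $g=\varphi_\infty$, this gives $\varphi_n^c\to\varphi_\infty^c$ uniformly; since $\varphi_n^c=\psi_n\to\psi_\infty$, we conclude $\psi_\infty=\varphi_\infty^c$, and symmetrically $\varphi_\infty=\psi_\infty^c$. Hence $(\varphi_\infty,\psi_\infty)\in\cB_M$ and $\cB_M$ is compact.

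No step of the argument is technically delicate: Proposition~\ref{prop-continuity-c-transform} does the hard work of producing a Lipschitz constant that depends only on the sup-norm bounds, and the rest is routine Arzelà-Ascoli together with the contraction property of the $c$-transform. The only thing to keep in mind is that the Lipschitz constant provided by Proposition~\ref{prop-continuity-c-transform} depends only on upper bounds for $\psi$ and $\psi^c$, which is precisely what makes the equi-Lipschitzness \emph{uniform} over $\cB_M$ rather than pair-dependent.
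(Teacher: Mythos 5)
Your proposal is correct and follows essentially the same approach as the paper: use Proposition~\ref{prop-min_c-concave}(2) to transfer the $L^\infty$ bound from $\psi$ to $\varphi$, invoke Proposition~\ref{prop-continuity-c-transform} to get a uniform Lipschitz bound, and conclude by Arzel\`a-Ascoli. The one thing you add that the paper glosses over is the closedness of $\cB_M$ in $C^0$ (the paper jumps directly from ``bounded in $C^{0,1}$'' to ``compact in $C^0$,'' which strictly speaking only gives relative compactness); your use of the $1$-Lipschitz contraction $\|f^c-g^c\|_\infty\le\|f-g\|_\infty$ to show the limit pair is again $c$-conjugate cleanly fills that small gap and is correctly handled, including the observation that the $+\infty$ values of $c$ contribute $+\infty$ to both infima and hence do not spoil the contraction estimate.
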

\begin{proof}
	The elements of $\cB_M$ are $c$-conjugate pairs. Therefore, Proposition \ref{prop-continuity-c-transform} implies that any  $(\varphi,\psi) \in \cB_M$ is a pair of Lipschitz functions with Lipschitz constants only depending on upper bounds of $\varphi$ and $\psi$. Moreover, Proposition \ref{prop-min_c-concave} (2) implies $\|\varphi\|_\infty\le M$, so that $\cB_M$ is bounded in $C^{0,1}$ and thus compact in $C^0$.
\end{proof}

%
%
%

 %
\bibliographystyle{plain}

\bibliography{JB-PC-Curvature_hyperbolic_convex}

\vspace{10mm}

\begin{flushleft}
	J\'er\^ome Bertrand \\
	Institut de Math\'ematiques de Toulouse, CNRS, Univ. Paul Sabatier \\
	\texttt{jerome.bertrand\symbol{64}math.univ-toulouse.fr}
\end{flushleft}

\vspace{5mm}

\begin{flushleft}
	Philippe Castillon \\
	Institut Montpelli\'erain Alexander Grothendieck, CNRS, Univ. Montpellier \\
	\texttt{philippe.castillon\symbol{64}umontpellier.fr}
\end{flushleft}

\end{document}